\theoremstyle{thmstyleone}%
\newtheorem{theorem}{Theorem}
\newtheorem{proposition}[theorem]{Proposition}%
\newtheorem{lemma}{Lemma}
\theoremstyle{thmstyletwo}%
\theoremstyle{thmstylethree}%
\newtheorem{definition}{Definition}%
\newtheorem{remark}{Remark}
\newtheorem{eexample}{Example}
\newtheorem*{assumptions}{Assumptions}
\algrenewcommand\algorithmicrequire{\textbf{Input:}}
\algrenewcommand\algorithmicensure{\textbf{Output:}}
\newenvironment{breakablealgorithm}
{
    \begin{center}
        \refstepcounter{algorithm}
        \renewcommand{\caption}[1]
        {
            \addcontentsline{loa}{algorithm}{\protect\numberline{\thealgorithm}##1}
            \parbox{\textwidth}
            {
                \hrule height.8pt depth0pt \kern2pt
                {\raggedright\textbf{\fname@algorithm~\thealgorithm} ##1\par}
                \kern2pt\hrule\kern2pt
            }
        }
}
{
        \kern2pt\hrule\relax
    \end{center}
}
\newcommand{\makered}[1]{{\color{red}#1}}
\newcommand{\makeblue}[1]{{\color{blue}#1}}
\newcommand{\makeorange}[1]{{\color{black}#1}}
\newcommand{\C}{\mathbb{C}}
\newcommand{\N}{\mathbb{N}}
\newcommand{\R}{\mathbb{R}}
\newcommand{\T}{\mathbb{T}}
\newcommand{\Z}{\mathbb{Z}}
\newcommand{\Zo}{\mathbb{Z}\setminus\{0\}}
\newcommand{\Cnn}{\C^{n\times n}}
\newcommand{\bigO}{{\mathcal{O}}}
\newcommand{\dd}[1][x]{\,\operatorname{d}\!#1}
\newcommand{\ddt}{\frac{\dd[]}{\dd[t]}}
\newcommand{\ddtddt}{\frac{\dd[]^2}{\dd[t^2]}}
\newcommand{\ddtddtddt}{\frac{\dd[]^3}{\dd[t^3]}}
\newcommand{\spn}{\operatorname{span}}
\newcommand{\tr}{\operatorname{Tr}}
\newcommand{\mA}{\mathbf{A}}
\newcommand{\mB}{\mathbf{B}}
\newcommand{\mC}{\mathbf{C}}
\newcommand{\mD}{\mathbf{D}}
\newcommand{\mE}{\mathbf{E}}
\newcommand{\mG}{\mathbf{G}}
\newcommand{\mI}{\mathbf{I}}
\newcommand{\mJ}{\mathbf{J}}
\newcommand{\mP}{\mathbf{P}}
\newcommand{\mQ}{\mathbf{Q}}
\newcommand{\mR}{\mathbf{R}}
\newcommand{\mT}{\mathbf{T}}
\newcommand{\mU}{\mathbf{U}}
\newcommand{\mV}{\mathbf{V}}
\newcommand{\mX}{\mathbf{X}}
\newcommand{\mY}{\mathbf{Y}}
\newcommand{\msqrtE}{\mE^{1/2}}
\newcommand{\msqrtEinv}{(\msqrtE)^{-1}}
\newcommand{\tA}{\widetilde \mA}
\newcommand{\tB}{\widetilde \mB}
\newcommand{\tE}{\widetilde \mE}
\newcommand{\tJ}{\widetilde \mJ}
\newcommand{\tPi}{\widetilde\Pi}
\newcommand{\tR}{\widetilde \mR}
\newcommand{\tSigma}{\widetilde \Sigma}
\DeclareFontFamily{U}{mathx}{\hyphenchar\font45}
\DeclareFontShape{U}{mathx}{m}{n}{
      <5> <6> <7> <8> <9> <10>
      <10.95> <12> <14.4> <17.28> <20.74> <24.88>
      mathx10
      }{}
\DeclareSymbolFont{mathx}{U}{mathx}{m}{n}
\DeclareMathAccent{\widecheck}{0}{mathx}{"71}
\DeclareMathAccent{\wideparen}{0}{mathx}{"75}
\newcommand{\hJ}{\widehat \mJ}
\newcommand{\hR}{\widehat \mR}
\newcommand{\mAH}{\mA_H}
\newcommand{\mAS}{\mA_S}
\newcommand{\mCH}{\mC_H}
\newcommand{\mHC}{m_{HC}}
\newcommand{\xk}{{w_k}}
\newcommand{\bbH}{\mathbb{H}}
\newcommand{\Hn}{\mathbb{H}_n} 
\newcommand{\ip}[2]{\langle {#1}, {#2} \rangle}
\newcommand{\ipBig}[2]{\Big\langle {#1}, {#2} \Big\rangle}
\newcommand{\norm}[1]{\| {#1} \|}
\newcommand{\normBig}[1]{\Big\| {#1} \Big\|}
\DeclareMathOperator{\const}{const}
\DeclareMathOperator{\diver}{div}
\DeclareMathOperator{\diag}{diag}
\DeclareMathOperator{\range}{range}
\DeclareMathOperator{\rank}{rank}
\DeclareMathOperator{\rot}{rot}
\newcommand{\hRzz}{\hR_{2,2}}
\newcommand{\VhRzz}{\big(\hR_{2,2}\big)^{1/2}}
\newcommand{\hJzz}{\hJ_{2,2}}
\newcommand{\hJkzz}{(\hJ_{k_1})_{2,2}}
\newcommand{\cB}{\mathcal{B}}
\newcommand{\cD}{\mathcal{D}}
\newcommand{\cH}{\mathcal{H}}
\newcommand{\cL}{\mathcal{L}}
\newcommand{\wcH}{\widetilde{\cH}}
\newcommand{\uI}{u_1}
\newcommand{\ukI}{\phi_{k,1}}
\newcommand{\uII}{u_2}
\newcommand{\ukII}{\phi_{k,2}}
\begin{document}


\title[Hypocoercivity in algebraically constrained partial differential equations]{Hypocoercivity in algebraically constrained partial differential equations with application to Oseen equations}


\author[1]{\fnm{Franz} \sur{Achleitner}}\email{franz.achleitner@tuwien.ac.at}
\equalcont{All authors contributed equally to this work.}

\author*[1]{\fnm{Anton} \sur{Arnold}}\email{anton.arnold@tuwien.ac.at}
\equalcont{All authors contributed equally to this work.}

\author[2]{\fnm{Volker} \sur{Mehrmann}}
\email{mehrmann@math.tu-berlin.de}
\equalcont{All authors contributed equally to this work.}

\affil*[1]{\orgdiv{Technische Universit\"at Wien (TU Wien)}, \orgname{Institute of Analysis and Scientific Computing}, \orgaddress{\street{Wiedner Hauptstr. 8-10}, \city{Wien}, \postcode{A-1040}, 
\country{Austria}}}

\affil[2]{\orgdiv{Technische Universit\"at Berlin (TU Berlin)}, \orgname{Institut f.~Mathematik MA 4-5}, \orgaddress{\street{Stra\ss{}e des 17.~Juni 136}, \city{Berlin}, \postcode{D-10623}, 
\country{Germany}}}



\abstract{
The long-time behavior of solutions to different versions of Oseen equations of fluid flow  on the 2D torus is analyzed using the concept of hypocoercivity. 
The considered models are isotropic Oseen equations where the viscosity acts uniformly in all directions and anisotropic Oseen-type equations with different viscosity  directions. 
The hypocoercivity index is determined (if it exists) and it is shown that similar to the finite dimensional case of ordinary differential equations and differential-algebraic equations it characterizes its decay behavior.
}

\keywords{hypocoercivity (index), dissipative systems, constrained PDEs, Oseen equation}

\pacs[MSC Classification]{Primary 34A30, Secondary 34C11, 47F06, 35E05}


\maketitle

\maketitle

\tableofcontents

\section{Introduction}
\label{sec:intro}
This paper is concerned with the long-time behavior and hypocoercivity structure of (an)isotropic Oseen equations from fluid dynamics. The Oseen equations describe the flow of a viscous and incompressible fluid at low Reynolds numbers and they have the form
\begin{equation} \label{eq:Oseen1}
\begin{cases}
u_t &=-(b\cdot\nabla)u- \nabla p +\nu \Delta u\,, \quad t>0\,, \\
0&=-\diver u \,.
\end{cases}
\end{equation}
The Oseen equations arise when one linearizes the incompressible or nearly incompressible {\em Navier-Stokes equations} describing the flow of a Newtonian fluid,
\begin{align*}
{u}_t &= - (u\cdot\nabla) u - \nabla p+\nu \Delta u\,, \quad t>0\,,\\
0&=-\diver u \,,
\end{align*}
around a prescribed vector field $b$, that is independent of space and time, see e.g. \cite{Shi73}.

As it also includes a (linear) convective term, it can be seen as an improvement of the flow description by the 
Stokes equations, see \cite[\S4.10]{Ba2000}, \cite[Chap.\ 2, \S11]{La1996}. 
The Oseen equations are a typical example of an operator differential-algebraic equation (DAEs) of the form
\begin{equation}\label{opDAE}
  \begin{bmatrix} \mI & 0 \\ 0 & 0  \end{bmatrix} 
  \frac{d}{dt} \begin{bmatrix} u \\ p \end{bmatrix}
  = -\begin{bmatrix} \mC & -\mD^T \\ \mD & 0  \end{bmatrix}  \begin{bmatrix} u \\ p \end{bmatrix}
\ ,\quad t>0\,,
\end{equation}
with the unbounded operators $\mC=b\cdot\nabla -\nu\Delta$ and $\mD=\diver$, see~\cite[p. 466]{EM13}.

DAEs of the form~\eqref{opDAE} also arise when the Oseen system is semi-discretized in space e.g. via a finite element discretization. This constitutes what is often called a vertical method of lines approach, see e.g., \cite{Jo16,Lay08,Ran00}.


In all described cases the equations have to be supplemented by suitable initial and boundary conditions. While in applications, \makeorange{see e.g. \cite{La1996,BaJi08,Gal11}}, the Oseen equation is typically considered on subsets of $\R^d$ or on unbounded exterior domains, to keep the presentations and the technicalities of our analysis simple, we analyze its long-time behavior here on the torus $\T^2 := (0,2\pi)^2$, \makeorange{similar to~\cite{FMRT01,T95,T97}}.

We perform the analysis using the concept of \emph{hypocoercivity} which was introduced 
in \cite{Vi09} in the study of unconstrained evolution equations (mostly partial differential equations) of the form $\ddt{x}=-\mC x$ on some Hilbert space~$\cH$, where the (possibly unbounded) operator~$-\mC$ generates a uniformly exponentially stable $C_0$-semigroup $(e^{-\mC t })_{t\geq 0}$.
More precisely, for hypocoercive operators~$\mC$ there exist constants $\mu>0$ and $c\ge1$, such that
\begin{equation}\label{exp-decay}
  \|e^{-\mC t} x_0\|_{\widetilde \cH}
  \le c\,e^{-\mu t} \|x_0\|_{\widetilde \cH}
  \qquad
  \mbox{\rm for all}\,x_0\in \widetilde \cH\,,
  \qquad
  t \geq 0\,,
\end{equation}
where $\widetilde \cH$ is another Hilbert space, densely embedded in $(\ker \mC)^\perp\subset \cH$.
Often, the evolution equation~$\ddt{x}=-\mC x$ is also called hypocoercive.
For infinitesimal generators $-\mC$, an estimate~\eqref{exp-decay} with $c=1$ holds if (and only if) $\mC$ is coercive.

The long-time behavior of many systems exhibiting hypocoercivity has been studied frequently in recent years, including Fokker--Planck equations~\cite{AASt15,ArEr14,Vi09}, kinetic equations~\cite{DoMoSc09,DoMoSc15}, and reaction-transport equations of BGK-type~\cite{AAC16,AAC18}. In these works, in particular in \cite{AAC16,AAC18,ArEr14}, the issue was to determine  the sharp (i.e.\ maximal) exponential decay rate~$\mu$, while to determine at the same time the smallest multiplicative constant~$c\ge1$ is a rather recent topic, e.g. see~\cite{AAS19}. 
Also the short-time behavior of linear evolution equations and its link to the \emph{hypocoercivity index} was recently discussed for systems of ordinary differential equations in \cite{AAC22,AAM22b} and for Fokker--Planck equations \cite[Th.\ 3.6]{ASS20}.

In this paper we consider (unbounded) operators $\mC$ on a 
Hilbert space~$\cH$ such that the operator $\mC$ is \emph{accretive}, i.e. $\mC$ has a nonnegative self-adjoint part, and that $-\mC$ generates a \emph{contraction semigroup}, i.e., 
it satisfies $\|e^{-\mC t}\|_\cH \leq 1$ for all $t\geq0$. 

We characterize those accretive operators~$\mC$ which are hypocoercive, i.e., those for which $-\mC$ generates a uniformly exponentially stable $C_0$-semigroup. 
Furthermore, based on our characterization and following the Lyapunov theory on Hilbert spaces, see e.g. \cite{Da70,Gr90,HaWe97}, we will construct appropriate strict Lyapunov functionals.

The remainder of this paper is structured as follows: 
In \S\ref{sec:hypocoercivity} we review key notions from hypocoercivity for finite dimensional ODEs and DAEs, and then extend it to the Hilbert space case. 
In \S\ref{ssec:Stokes} we apply these techniques to analyze Oseen equations on the 2D torus where the viscosity acts uniformly in all directions.
In Section~\S\ref{ssec:Oseen:anisotropic} we then study two anisotropic Oseen-type equations on the 2D torus, where the viscosity is different in the different space directions and the drift is either constant or space dependent.

\subsection*{Notation}

The conjugate transpose (transpose) of a matrix $\mC$ is denoted by $\mC^H$ ($\mC^\top$).
The set of Hermitian matrices in $\Cnn$ is denoted by $\bbH_n$.
Positive definiteness (semi-definiteness) of $\mC\in\bbH_n$ is denoted by $\mC>0$ ($\mC\geq 0$).
The unique \makeorange{positive semi-definite} square root of a positive semi-definite Hermitian matrix~$\mR$ is denoted by $\mR^{1/2}$ and the real part of a complex number $z$ is denoted by $\Re(z)$.

For linear operators on Hilbert spaces we use the following notation. 
%
The set of (possibly unbounded) linear operators from a Hilbert space $\mathcal H$ (with inner product $\ip{\cdot}{\cdot}$) to itself is denoted by ${\mathcal L}({\mathcal H})$, the subset of bounded linear operators by ${\mathcal B}({\mathcal H})$.
For a linear operator $\mC\in{\cL}({\cH})$ with domain $\cD(\mC)$, $\mC^*$ denotes the adjoint operator of $\mC$.
A self-adjoint operator $\mC\in {\mathcal L}({\mathcal H})$ is called \emph{nonnegative} ($\mC\geq 0$) if $\ip{\mC x}{x}=\ip{x}{\mC^*x}\geq 0$ for all $x\in\cD(\mC)$;
$\mC$ is called \emph{positive} ($\mC>0$) if $\ip{\mC x}{x}> 0$ for all $x\in\cD(\mC)\setminus\{0\}$.

We consider spaces of square-summable doubly-infinite sequences taking values in $\C^n$, $n=1,2,3$ and denote them as $\ell^2(\Z;\C^n)$; if $n=1$ then we will use the short-hand notation $\ell^2(\Z)=\ell^2(\Z;\C)$.
$\dot H^1_{per}(\T^2)$ denotes the homogeneous Sobolev space $\{f\in H^1_{per}(\T^2)\,|\, \int_{\T^2} f \dd[x]=0\}$ of periodic functions on the 2D-torus $\T^2$, and $\dot H^{-1}_{per}(\T^2)$ is its dual space.

\section{Hypocoercivity for finite and infinite dimensional evolution equations}
\label{sec:hypocoercivity}
In this section we recall the concepts of \emph{hypocoercivity and the hypocoercivity index} for (constrained) evolution equations. Our discussion will be in three steps, starting with finite dimensional cases, then infinite dimensional operator evolution equations with bounded generators, and finally some cases of unbounded generators.
We begin with the class of ordinary differential equation (ODE) systems 
\begin{equation}\label{ODE:A}
 \dot{x}(t) = \mA x(t)\,, \qquad t\geq 0\,,
\end{equation}
with some function $x:[0,\infty)\to\C^n$ and a constant matrix~$\mA\in\Cnn$. The second class are differential-algebraic equation (DAE) systems 
\begin{equation}\label{DAE:EA}
\mE \dot{x}(t) = \mA x(t)\,, \qquad t\geq 0\,,
\end{equation}
for a pair~$(\mE,\mA)$ of constant matrices~$\mE,\mA\in\Cnn$ with $\mE=\mE^H$ positive semi-definite.

Note that if~$\mE\in\Hn$ is positive definite, then it has a positive definite matrix square root~$\msqrtE\in\Hn$, and by a change of variables~$y:= \msqrtE x$ and by scaling the equation by $\msqrtEinv$, the DAE~\eqref{DAE:EA} takes the form
\begin{equation}\label{ODE:A-tilde}
\dot{y}(t) =\tA y
\qquad\text{where }
\tA :=\msqrtEinv\mA \msqrtEinv \ .
\end{equation}
However, if $\mE$ is singular then the behavior of the two systems~\eqref{ODE:A}--\eqref{DAE:EA} is fundamentally different.

Writing a matrix $\mA\in\Cnn$ as the sum of its Hermitian part $\mAH =(\mA +\mA^H)/2$ and skew-Hermitian part $\mAS =(\mA -\mA^H)/2$, we have the following definition.
\begin{definition}[{Definition 4.1.1 of~\cite{Be18}}] \label{def:dissipative:finiteDim}
A matrix~$\mA\in\Cnn$  is called~\emph{dissipative} (resp. \emph{semi-dissipative}) if the Hermitian part~$\mAH$ is negative definite (resp. negative semi-definite). 
For a (semi-)dissipative matrix~$\mA\in\Cnn$, the associated ODE~\eqref{ODE:A} is called \emph{(semi-)dissipative Hamiltonian ODE}.
A DAE ~\eqref{DAE:EA} with (semi-)dissipative matrix $\mA\in\Cnn$ and positive semi-definite Hermitian matrix~$\mE\in\Cnn$ is called~ \emph{(semi-)dissipative Hamiltonian DAE}.
\end{definition}
The notion (semi-)dissipative Hamiltonian is motivated by the fact that if $\mAH=0$ and $\mE$ is the identity, then~\eqref{ODE:A} is a \emph{Hamiltonian system} with \emph{Hamiltonian} $\mathsf H=(x^H \mE x)/2$, see~\cite{MehMW18} and also
\cite[Remark 1]{AAM21} and~\cite[Theorem 3(E1)]{AAM21}.

In the following (to avoid too many indices), we often write semi-dissipative matrices~$\mA$ in the form~$\mA =\mJ -\mR$ with a skew-Hermitian matrix~$\mJ=\mAS$ and a positive semi-definite Hermitian matrix $\mR =-\mAH$.

For ODE systems, hypocoercivity and the hypocoercivity index are defined as follows:

\begin{definition}[{\cite{AAC22}}] \label{def:matrix:hypocoercive}
A matrix $\mC\in\Cnn$ is called~\emph{coercive} if its Hermitian part $\mCH$ is positive definite, and it is called~\emph{hypocoercive} if the spectrum of~$\mC$ lies in the \emph{open} right half plane.


Let $\mJ,\mR\in\Cnn$ satisfy $\mR=\mR^H\geq 0$ and $\mJ=-\mJ^H$.
The~\emph{hypocoercivity index (HC-index)~$m_{HC}$ of the matrix~$\mC=\mR-\mJ$} is defined as the smallest integer~$m\in\N_0$ (if it exists) such that
\begin{equation}\label{Tm:J-R}
 \sum_{j=0}^m \mJ^j \mR (\mJ^H)^j > 0 
\end{equation}
holds.
\end{definition}

\begin{remark}\label{rem:short-t-decay}
\begin{enumerate}
    \item[(1)] Clearly, \eqref{Tm:J-R} is equivalent to the condition 
    \begin{equation}\label{Tm:J-R-strict}
        \sum_{j=0}^m \mJ^j \mR (\mJ^H)^j \ge \kappa\mI
    \end{equation}
    for some $\kappa>0$, where $\mI$ denotes the identity matrix.
    This variant will be needed in the infinite dimensional case below. 
    Condition~\eqref{Tm:J-R-strict} was also used in \cite{AAC18}. 
    \item[(2)] By a non-trivial result, the HC-index characterizes the short-time decay of semi-dissipative Hamiltonian ODE systems $\dot x(t)=-\mC x(t)$: Its system matrix $\mC$ has a HC-index $m_{HC}\in\N_0$ if and only if
\begin{equation} \label{short-t-decay}
\|e^{-\mC t}\|_2 = 1-ct^{2m_{HC}+1} + \mathcal O(t^{2m_{HC}+2}) \quad \mbox{for }t\to 0^+,
\end{equation}
for some $c>0$. Here we used that $\|e^{-\mC t}\|_2$ is a real analytic function on some (small) time interval $[0,t_0)$, see Theorem 2.7(a) in \cite{AAC22}.

This classification can be extended to the DAE case by considering the HC-index of the dynamical part, see Proposition \ref{prop:DAE+HC-decay} in the Appendix 
\ref{app:DAEs:hypocoercivity}.
\end{enumerate}

\end{remark}

Condition \eqref{Tm:J-R} is equivalent to the well known \emph{Kalman rank condition}:
\begin{lemma}[{\cite[Proposition 1, Remark 4]{AAC18}}] \label{lem:Equivalence}
Let $\mJ,\mR\in\Cnn$ satisfy $\mR=\mR^H\geq 0$ and $\mJ=-\mJ^H$. 
Then the following conditions are equivalent:
\begin{enumerate}[(B1)]
\item \label{B:KRC}
There exists $m\in\N_0$ such that
\begin{equation}\label{condition:KalmanRank}
 \rank[{\mR},\mJ{\mR},\ldots,\mJ^m {\mR}]=n \,.
\end{equation}
\item \label{B:KRC'}
There exists $m\in\N_0$ such that
\begin{equation}\label{condition:KalmanRank'}
 \bigcap_{j=0}^m \ker\big(\mR^{1/2}\ \mJ^j\big) 
= \{0\} \,.
\end{equation}
\item \label{B:Tm}
There exists $m\in\N_0$ such that \eqref{Tm:J-R} holds.
%
\end{enumerate}
Moreover, the smallest possible~$m\in\N_0$ in~\ref{B:KRC}, \ref{B:KRC'}, and~\ref{B:Tm} coincide. 
\end{lemma}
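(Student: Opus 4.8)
The plan is to show that for each \emph{fixed} $m\in\N_0$ the three statements \eqref{condition:KalmanRank}, \eqref{condition:KalmanRank'} and \eqref{Tm:J-R} are equivalent to one another; once this is done, the fact that each of the three is monotone in $m$ (the block Kalman matrix only gains columns, the intersection of kernels only shrinks, and $T_m:=\sum_{j=0}^m\mJ^j\mR(\mJ^H)^j$ is nondecreasing as a sum of positive semi-definite terms) immediately yields both the existence claims in \ref{B:KRC}--\ref{B:Tm} and the coincidence of the smallest admissible~$m$.

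First I would record two elementary reductions. Since $\mR=\mR^{1/2}\mR^{1/2}$ with $\mR^{1/2}=(\mR^{1/2})^H\ge 0$, one has $\ker\mR=\ker\mR^{1/2}$, hence $\range\mR=\range\mR^{1/2}$, and applying $\mJ^j$ gives $\range(\mJ^j\mR)=\range(\mJ^j\mR^{1/2})$ for every $j$. Therefore the column space of $[\mR,\mJ\mR,\dots,\mJ^m\mR]$ equals that of $[\mR^{1/2},\mJ\mR^{1/2},\dots,\mJ^m\mR^{1/2}]$, so \eqref{condition:KalmanRank} (for this $m$) is equivalent to the latter block matrix having full row rank~$n$, which by rank--nullity is equivalent to the triviality of the kernel of its conjugate transpose, i.e.\ $\bigcap_{j=0}^m\ker\big((\mJ^j\mR^{1/2})^H\big)=\{0\}$. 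Using that $\mR^{1/2}$ is Hermitian and $\mJ^H=-\mJ$, one has $(\mJ^j\mR^{1/2})^H=\mR^{1/2}(\mJ^H)^j=(-1)^j\mR^{1/2}\mJ^j$, so $\ker\big((\mJ^j\mR^{1/2})^H\big)=\ker(\mR^{1/2}\mJ^j)$ and we land exactly on \eqref{condition:KalmanRank'}. This gives \ref{B:KRC}~$\Leftrightarrow$~\ref{B:KRC'}.

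For \ref{B:Tm}~$\Leftrightarrow$~\ref{B:KRC'} the one computation I would carry out is the quadratic-form identity
\[
  x^H T_m x=\sum_{j=0}^m x^H\mJ^j\mR^{1/2}\mR^{1/2}(\mJ^H)^j x=\sum_{j=0}^m\norm{\mR^{1/2}(\mJ^H)^j x}^2 ,\qquad x\in\Cn ,
\]
valid because $(\mJ^H)^j=(\mJ^j)^H$ and $\mR^{1/2}$ is Hermitian. Hence $T_m>0$ fails precisely when some $x\neq 0$ satisfies $\mR^{1/2}(\mJ^H)^j x=0$ for all $j=0,\dots,m$, and since $\ker(\mR^{1/2}(\mJ^H)^j)=\ker(\mR^{1/2}\mJ^j)$ (again by $(\mJ^H)^j=(-1)^j\mJ^j$), this is exactly the negation of \eqref{condition:KalmanRank'}. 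Thus \eqref{Tm:J-R} and \eqref{condition:KalmanRank'} hold for the same values of~$m$.

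The argument is entirely linear-algebraic and I do not expect a genuine obstacle; the only points that need care — and that are precisely what forces the three minimal indices to agree rather than merely differ by a shift — are the range identifications $\range(\mJ^j\mR)=\range(\mJ^j\mR^{1/2})$ and the sign bookkeeping $(\mJ^H)^j=(-1)^j\mJ^j$ used to replace $\mR^{1/2}\mJ^j$ by $\mR^{1/2}(\mJ^H)^j$ (and vice versa) without changing kernels.
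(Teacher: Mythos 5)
Your proof is correct. The paper itself does not prove this lemma but cites [AAC18, Proposition 1, Remark 4], so there is no in-paper argument to compare against; your linear-algebraic chain — reduce the Kalman rank block to its $\mR^{1/2}$ analogue via $\ker\mR=\ker\mR^{1/2}$, pass to the conjugate transpose to obtain the kernel intersection, and use the identity $x^H T_m x=\sum_{j=0}^m\|\mR^{1/2}(\mJ^H)^j x\|^2$ together with $(\mJ^H)^j=(-1)^j\mJ^j$ — is the standard route and matches what the cited reference does, and your monotonicity observation correctly delivers the coincidence of the minimal $m$.
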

A similar equivalence result (on a subspace describing the dynamics of the system) has also been shown for DAE systems in \cite{AAM21}, see  Appendix~\ref{app:DAEs:hypocoercivity} here.\\

The characterization of hypocoercive matrices is related to results in control theory (as discussed in~\cite[Remark 2]{AAM21}):
\begin{remark}[Connection to control theory:  finite dimensional setting] \label{rem:controlTheory}
Consider a state-space system
\begin{equation}\label{ODE:AxBu}
 \dot x(t) =\mA x +\mB u
\end{equation}
for constant matrices $\mA,\mB\in\Cnn$. 
A pair~$(\mA,\mB)$ of square matrices~$\mA,\mB\in\Cnn$ is called \emph{controllable} if the controllability matrix~$[\mB, \mA\mB, \mA^2 \mB,\ldots,\mA^{n-1} \mB]$ has full rank.
For a controllable pair~$(\mA,\mB)$, the smallest possible integer~$k$ such that the controllability (sub)matrix $[\mB, \mA\mB, \mA^2 \mB,\ldots,\mA^{k-1} \mB]$ has full rank, is called the \emph{controllability index}, see e.g.~\cite[\S 6.2.1]{Ch99}, \cite[\S 6.2.1]{Ka80}, or~\cite[\S 5.7]{Wo85}.
\begin{itemize}
 \item
For semi-dissipative matrices~$\mJ-\mR$, the HC-index of $\mR-\mJ$ is one less than the controllability index of~$(\mA,\mB)=(\mJ,\mR)$.
 \item
Moreover, for general input-output systems there is also the dual concept of \emph{observability}.
Conditions like~\eqref{condition:KalmanRank'} are most often formulated in the context of observability, but not 
frequently in the context of controllability.
%
 \item
Whereas the controllability/observability indices have a clear interpretation in the discrete-time setting, see e.g.~\cite[p.171]{Ch99}; their interpretation in the continuous-time setting is not so clear. 
In particular, we are not aware of a characterization which is comparable to~\eqref{short-t-decay} in Remark~\ref{rem:short-t-decay}.
\end{itemize}
\end{remark}

While the two conditions  \eqref{condition:KalmanRank} and \eqref{condition:KalmanRank'} 
are clearly equivalent in the finite dimensional ODE case, only the latter one may also be used for linear operators. 
In fact, the above results are easily extended to
evolution equations of the form 
\begin{equation}\label{C-eq}
  \ddt{x}=-\mC x\,,\quad t>0,
\end{equation}
with $\mC$ a bounded or unbounded operator 
on some infinite dimensional 
Hilbert space~$\cH$:

\begin{definition}[{\cite[\S V.3.10]{Kato}}]
A linear operator $\mC$ on a Hilbert space $\cH$, with domain  $\cD(\mC)$, is said to be \emph{accretive} if the numerical range of $\mC$ is a subset of the right half plane, that is, if $\Re\ip{\mC x}{x} \geq 0$ for all $x\in\cD(\mC)$.
In this case $-\mC$ is said to be \emph{dissipative}.
And $\mC$ is called \emph{coercive} if there exists $\gamma>0$ such that $\ip{\mC x}{x}\geq \gamma\|x\|^2$ for all $x\in\cD(\mC)$.  
\end{definition}
Note that in this definition we follow the convention in semigroup theory, see e.g.~\cite[Proposition 3.23]{EnNa00}; whereas to be consistent with Definition~\ref{def:dissipative:finiteDim} we would have to call such an operator \emph{semi-dissipative}.


The classical Lyapunov characterization of (uniformly) exponentially stable semigroups on finite-dimensional Hilbert spaces easily extends to infinite dimensional settings:
\begin{theorem}[{\cite[Theorem 4.1.3]{CuZw20}},\cite{Da70}] 
Suppose that $\mA$ is the infinitesimal generator of the $C_0$-semigroup $\mT(t)$ on the Hilbert space~$\cH$.
Then $\mT(t)$ is uniformly exponentially stable if and only if there exists a bounded positive operator $\mP\in\cB(\cH)$ such that
\begin{equation} \label{LMI:H}
 \ip{\mA x}{\mP x} +\ip{\mP x}{\mA x} =-\ip{x}{x} \qquad\text{for all } x\in\cD(\mA)\ .
\end{equation}
Equation~\eqref{LMI:H} is called \emph{Lyapunov equation}.

If $\mT(t)$ is uniformly exponentially stable, then the unique self-adjoint solution of~\eqref{LMI:H} is given by
\begin{equation}
 \mP x =\int_0^{\infty} \mT(s)^* \mT(s) x \dd[s]
\qquad\text{for } x\in\cH\ .
\end{equation}
\end{theorem}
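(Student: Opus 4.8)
The plan is to prove the equivalence by the two implications separately, then treat uniqueness, which together with the forward implication yields the representation formula. A preliminary observation I would use throughout: since $\cH$ is complex, a bounded operator $\mP$ with $\ip{\mP x}{x}>0$ for all $x\neq 0$ has a real numerical range and is therefore self-adjoint, so $\mP=\mP^*$ and $\ip{\mP\mA x}{x}=\ip{\mA x}{\mP x}$; in particular the left-hand side of \eqref{LMI:H} equals $\ip{\mP\mA x}{x}+\ip{\mP x}{\mA x}$.

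For the implication that uniform exponential stability yields a solution of \eqref{LMI:H}, I would start from a bound $\|\mT(t)\|\le M e^{-\omega t}$ with $M\ge 1$, $\omega>0$, and define $\mP$ by the stated formula. The estimate $\|\mT(s)^*\mT(s)x\|\le M^2 e^{-2\omega s}\|x\|$ shows that the Bochner integral converges and defines $\mP\in\cB(\cH)$ with $\|\mP\|\le M^2/(2\omega)$; self-adjointness and positivity are then immediate from $\ip{\mP x}{y}=\int_0^\infty \ip{\mT(s)x}{\mT(s)y}\dd[s]$ and $\ip{\mP x}{x}=\int_0^\infty \|\mT(s)x\|^2\dd[s]$, the latter being strictly positive for $x\neq 0$ by continuity of $s\mapsto\|\mT(s)x\|$ near $s=0$. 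For the Lyapunov equation, fix $x\in\cD(\mA)$; then $\mT(s)x\in\cD(\mA)$, $\mA\mT(s)x=\mT(s)\mA x$, and $s\mapsto\|\mT(s)x\|^2$ is $C^1$ with derivative $\ip{\mT(s)\mA x}{\mT(s)x}+\ip{\mT(s)x}{\mT(s)\mA x}$, which is integrable on $[0,\infty)$ by the exponential bounds. Integrating this identity over $[0,\infty)$ and using $\mT(s)x\to 0$ gives $\ip{\mP\mA x}{x}+\ip{\mP x}{\mA x}=-\|x\|^2$, i.e.\ \eqref{LMI:H}.

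For the converse, suppose a bounded positive $\mP$ solves \eqref{LMI:H}. For $x_0\in\cD(\mA)$ the orbit $x(t):=\mT(t)x_0$ stays in $\cD(\mA)$, and since $\mP$ is bounded, $t\mapsto\ip{\mP x(t)}{x(t)}$ is differentiable with derivative $\ip{\mP\mA x(t)}{x(t)}+\ip{\mP x(t)}{\mA x(t)}=-\|x(t)\|^2$ by \eqref{LMI:H}. Integrating over $[0,T]$ and discarding the nonnegative term $\ip{\mP x(T)}{x(T)}$ yields $\int_0^T\|\mT(t)x_0\|^2\dd[t]\le\ip{\mP x_0}{x_0}\le\|\mP\|\,\|x_0\|^2$; since both sides are continuous quadratic forms in $x_0$ and $\cD(\mA)$ is dense, this extends to all $x_0\in\cH$, and letting $T\to\infty$ gives $\int_0^\infty\|\mT(t)x_0\|^2\dd[t]\le\|\mP\|\,\|x_0\|^2<\infty$ for every $x_0\in\cH$. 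That such a square-integrability bound forces uniform exponential stability is precisely Datko's theorem \cite{Da70} (see also \cite[Theorem 4.1.3]{CuZw20}); invoking it completes this direction. I expect this step to be the main obstacle, in the sense that it is the only genuinely non-elementary ingredient and the argument is organized so as to be able to call on it---everything else is the fundamental theorem of calculus applied to $\|\mT(\cdot)x\|^2$ and $\ip{\mP\mT(\cdot)x}{\mT(\cdot)x}$.

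Finally, for uniqueness: if $\mP_1,\mP_2$ are bounded self-adjoint solutions of \eqref{LMI:H}, then $\mQ:=\mP_1-\mP_2$ is bounded self-adjoint and satisfies the homogeneous identity $\ip{\mA x}{\mQ x}+\ip{\mQ x}{\mA x}=0$ on $\cD(\mA)$. Hence for $x_0\in\cD(\mA)$ the function $t\mapsto\ip{\mQ\mT(t)x_0}{\mT(t)x_0}$ has zero derivative, so it is constant, and it tends to $0$ as $t\to\infty$ by uniform exponential stability; thus $\ip{\mQ x_0}{x_0}=0$ for all $x_0\in\cD(\mA)$, hence for all $x_0\in\cH$ by density, and $\mQ=0$ by the complex polarization identity. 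Therefore the solution of \eqref{LMI:H} is unique, and by the forward implication it is represented by $\int_0^\infty\mT(s)^*\mT(s)(\cdot)\dd[s]$.
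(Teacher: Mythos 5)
The paper does not prove this theorem; it quotes it directly from Curtain--Zwart (Theorem 4.1.3) and Datko, so there is no ``paper proof'' to compare against. Your argument is correct and is precisely the standard proof found in those references: construct $\mP$ by the integral formula and verify \eqref{LMI:H} by differentiating $\|\mT(s)x\|^2$ and integrating over $[0,\infty)$; conversely, integrate $\tfrac{d}{dt}\ip{\mP\mT(t)x_0}{\mT(t)x_0}=-\|\mT(t)x_0\|^2$ to get $L^2$-integrability of orbits and invoke Datko's theorem; and obtain uniqueness from the vanishing of the constant function $t\mapsto\ip{\mQ\mT(t)x_0}{\mT(t)x_0}$ for the difference $\mQ$ of two self-adjoint solutions. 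Your remark that strict positivity forces self-adjointness on a complex Hilbert space is harmless but redundant given the paper's convention that ``positive'' is defined only for self-adjoint operators.
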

Next we recall the definition of hypocoercive operators in~\cite{Vi09}, which generalizes Definition~\ref{def:matrix:hypocoercive}:
%
\begin{definition}[{\cite[\S I.3.2]{Vi09}}] \label{def:hypoinfdim}
Let~$\mC$ be a (possibly unbounded) operator on a Hilbert space~$\cH$ with kernel~$\ker\mC$.
Let~$\widetilde\cH$ be a Hilbert space, which is continuously and densely embedded in $(\ker\mC)^\perp$, endowed with a scalar product~$\ip{\cdot}{\cdot}_{\widetilde\cH}$ and norm $\|\cdot\|_{\widetilde\cH}$.
The operator $\mC$ is called \emph{hypocoercive} on $\widetilde\cH$ if $-\mC$ generates a uniformly exponentially stable $C_0$-semigroup $(e^{-\mC t })_{t\geq 0}$ on~$\widetilde\cH\hookrightarrow (\ker\mC)^\perp$, i.e.~\eqref{exp-decay} holds.
\end{definition}

Next we shall generalize the notion \emph{hypocoercivity index} to the infinite dimensional Hilbert space case. As in the finite dimensional case, we consider operators of the form $\mC=\mR-\mJ$ with $\mR$ self-adjoint and nonnegative, and $\mJ$ skew-adjoint. For technical reasons (related to the operator domain) we shall first discuss bounded operators $\mC$ and then some special situations of unbounded operators $\mC$.

%
\begin{definition}
\label{def:op:HC-index}
Consider bounded operators $\mR,\,\mJ\in\cB(\cH)$ on a Hilbert space~$\cH$ such that $\mR$ is self-adjoint and nonnegative, and $\mJ$ is skew-adjoint, i.e., $\mJ^*=-\mJ$.
The~\emph{hypocoercivity index (HC-index)~$m_{HC}$ of the (accretive) operator~$\mC:=\mR-\mJ\in{\mathcal B}({\mathcal H})$ } is defined as the smallest integer~$m\in\N_0$ (if it exists) such that 
\begin{equation}\label{Op:J-R}
  \sum_{j=0}^m \mJ^j \mR (\mJ^*)^j \ge\kappa\mI 
\end{equation}
for some $\kappa>0$. 
\end{definition}

Here we generalized the uniform condition \eqref{Tm:J-R-strict}, since the example $\mC=\mR=\diag(1/j;\,j\in\N)$ on $\cH=\ell^2(\N)$ would satisfy the non-uniform condition \eqref{Tm:J-R}. However, the corresponding semigroup satisfies $\|e^{-\mC t}\|=1,\,t\ge0$, hence $\mC$ is not hypocoercive. 

When defining next the hypocoercivity index for unbounded operators, we do not aim at the largest generality of equations \eqref{C-eq}, but rather we present a framework that covers the Oseen equations in \S\ref{ssec:Oseen:anisotropic} below. 
In our extension, the unbounded operator $\mC=\mR-\mJ$ is accretive 
with the following assumptions:
\begin{assumptions}\
\begin{enumerate}[label=(A\arabic*)]
  \item \label{prop:R}
The unbounded operator $\mR$ with dense domain $\mathcal D(\mR)\subset \cH$ is self-adjoint (and hence closed) and nonnegative in $\cH$. 
The operator $\mJ$ is bounded and skew-adjoint on $\cH$.
  \item \label{prop:J}
$\mJ$ satisfies $\mJ(\mathcal D(\mR))\subset \mathcal D(\mR)$.
\item \label{prop:R1/2}
For the self-adjoint (and hence closed) operator $\mR^{1/2}$ defined on $\mathcal D(\mR^{1/2})\supset\mathcal D(\mR)$ assume that $\mJ(\mathcal D(\mR^{1/2}))\subset \mathcal D(\mR^{1/2})$.  
\end{enumerate}
\end{assumptions}

Under these assumptions, standard arguments from semigroup theory show that $-\mR$ and hence also $-\mC=-\mR+\mJ$ are dissipative with domain~$\cD(\mR)$ and, hence, infinitesimal generators of $C_0$-semigroups of contractions on $\cH$, see  e.g. \S1.4 of \cite{Pazy}.
Moreover these semigroups are analytic, see~\cite[Theorem III.2.10]{EnNa00}:
Due to~\cite[Corollary II.4.7]{EnNa00}, the operator $-\mR$ generates a bounded analytic semigroup on~$\cH$, and $\mJ$ is $\mR$-bounded with $\mR$-bound $a_0=0$, see~\cite[Definition III.2.1]{EnNa00}.

For unbounded operators, the relation between the domains of the operator, its self-adjoint part and its skew-adjoint part can be subtle.
Therefore, different extensions of  Definition~\ref{def:op:HC-index} are reasonable.
For example, under the assumptions~\ref{prop:R}--\ref{prop:J}, each term of the sum~\eqref{Op:J-R} is well-defined on $\mathcal D(\mR)$. 
However, the following extension to $\cD(\mR^{1/2})$ is more convenient for the subsequent lemma:

\begin{definition}\label{def:unb-op:HC-index}
Let the (accretive) operator~$\mC=\mR-\mJ$ satisfy the Assumptions~\ref{prop:R}, \ref{prop:R1/2}. 
Then the~\emph{hypocoercivity index} (HC-index)~$m_{HC}$ of~$\mC$ is defined as the smallest integer~$m\in\N_0$ (if it exists) such that 
\begin{equation} \label{Op:J-R:unbdd}
 \sum_{j=0}^m \| \mR^{1/2}\ (\mJ^*)^j x\|^2  
\geq \kappa \|x\|^2 \qquad \mbox{for all } \,x\in\cD(\mR^{1/2})\ ,
\end{equation}
for some $\kappa>0$.
\end{definition}

In both of the above settings (Definitions~\ref{def:op:HC-index} and \ref{def:unb-op:HC-index}) we have the following infinite dimensional analog of Lemma \ref{lem:Equivalence}. 
%
\begin{lemma} \label{lem:Op-Equivalence}
Let the operators $\mR,\,\mJ\in\cL(\cH)$ on a Hilbert space~$\cH$ satisfy either the assumptions in Definition \ref{def:op:HC-index} (if $\mC=\mR-\mJ$ is bounded) or the assumptions in Definition \ref{def:unb-op:HC-index} (if $\mC$ is unbounded).
Then the following three conditions are equivalent:
\begin{enumerate}[(B1')]
\setcounter{enumi}{0}
\item \label{B:G*_surjective}
There exists $m\in\N_0$ such that
\[
\spn \Bigg(\bigcup_{j=0}^m \range\big(\mJ^j \mR^{1/2}\big)\Bigg) =\cH\ .
\]
\item \label{B:KRC'op}
There exists $m\in\N_0$ such that
\[
 \bigcap_{j=0}^m \ker\big(\mR^{1/2}\ \mJ^j\big) 
= \{0\} \,, \qquad
 \text{and }  
 \spn \Bigg(\bigcup_{j=0}^m \range\big(\mJ^j \mR^{1/2}\big)\Bigg)\ \text{ is closed.}
\] 

\item \label{B:Tmop}
There exists $m\in\N_0$ such that \eqref{Op:J-R:unbdd} holds for some $\kappa>0$. 
\end{enumerate}
 
Moreover, the smallest possible~$m\in\N_0$ coincides in all cases (if it exists).
\end{lemma}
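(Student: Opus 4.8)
The plan is to prove the cyclic chain of implications \ref{B:G*_surjective} $\Rightarrow$ \ref{B:KRC'op} $\Rightarrow$ \ref{B:Tmop} $\Rightarrow$ \ref{B:G*_surjective}, all for the same fixed value of $m$, which automatically yields the coincidence of the smallest admissible indices. The unifying algebraic identity behind everything is that for $x\in\cD(\mR^{1/2})$ (which in the bounded case is all of $\cH$) one has
\[
 \sum_{j=0}^m \big\langle x,\mJ^j \mR (\mJ^*)^j x\big\rangle
 = \sum_{j=0}^m \| \mR^{1/2}\ \mJ^j x\|^2
 = \sum_{j=0}^m \| (\mJ^j\mR^{1/2})^* x\|^2 ,
\]
using $\mR=\mR^{1/2}\mR^{1/2}$, self-adjointness of $\mR^{1/2}$, skew-adjointness of $\mJ$ (so $(\mJ^j)^*=(\mJ^*)^j=(-1)^j\mJ^j$, up to a sign that is immaterial under the squared norm), and Assumption~\ref{prop:R1/2} guaranteeing $\mJ^j x\in\cD(\mR^{1/2})$. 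Let $G:=\big[\,\mR^{1/2},\ \mJ\mR^{1/2},\ \dots,\ \mJ^m\mR^{1/2}\,\big]$ be the operator from $\cH^{m+1}$ to $\cH$ given by $(y_0,\dots,y_m)\mapsto\sum_j \mJ^j\mR^{1/2}y_j$; it is bounded since each $\mJ^j$ is bounded and $\mR^{1/2}$ maps into $\cH$. Then $\range(G)=\spn\big(\bigcup_j\range(\mJ^j\mR^{1/2})\big)$, while $\ker(G^*)=\bigcap_j\ker\big((\mJ^j\mR^{1/2})^*\big)=\bigcap_j\ker\big(\mR^{1/2}\mJ^j\big)$ (again using skew-adjointness to move $\mJ^j$ across). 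Condition \ref{B:Tmop} says precisely that $G^*$ is bounded below on $\cD(\mR^{1/2})$, equivalently on $\cH$ by density and closedness considerations in the unbounded case.

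For \ref{B:G*_surjective}$\Rightarrow$\ref{B:KRC'op}: if $\range(G)=\cH$ then it is trivially closed, and since $G$ is bounded and surjective, $G^*$ is bounded below (open mapping / closed range theorem, \cite[Kato]), so in particular $\ker(G^*)=\{0\}$, which is the first half of \ref{B:KRC'op}. For \ref{B:KRC'op}$\Rightarrow$\ref{B:Tmop}: if $\range(G)$ is closed, then so is $\range(G^*)$ and $\cH=\ker(G^*)\oplus\overline{\range(G)}$; combined with $\ker(G^*)=\{0\}$ we get $\range(G)$ dense and closed, hence $=\cH$, hence $G$ surjective, hence $G^*$ bounded below, which is \ref{B:Tmop} (with $\kappa$ the square of the lower bound). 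For \ref{B:Tmop}$\Rightarrow$\ref{B:G*_surjective}: $G^*$ bounded below implies $\ker(G^*)=\{0\}$ and $\range(G^*)$ closed, hence $\range(G)=\overline{\range(G)}=\ker(G^*)^\perp=\cH$, which is \ref{B:G*_surjective}. Throughout, the coincidence of minimal $m$ is built in because every implication preserves $m$.

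The main obstacle is the unbounded case: one must make sense of $G$ and $G^*$ when $\mR^{1/2}$ is unbounded. The remedy is to work with $\mR^{1/2}$ only as a closed, densely defined operator and to replace "$G$ bounded surjective" arguments by the closed-range theorem for closed operators (\cite[Kato, \S IV.5.1]), noting that $\mJ^j\mR^{1/2}$ is closed (bounded operator times closed operator) with domain $\cD(\mR^{1/2})$ by Assumption~\ref{prop:R1/2}, and that its adjoint is $\mR^{1/2}(\mJ^*)^j=(-1)^j\mR^{1/2}\mJ^j$ on the same domain. One then checks that $\sum_j(\mJ^j\mR^{1/2})(\mJ^j\mR^{1/2})^*$ — or more precisely the closed operator $G$ formed by stacking — has closed range iff the lower bound \eqref{Op:J-R:unbdd} holds, which is the standard equivalence "closed range + trivial kernel of adjoint $\Leftrightarrow$ bounded below" transported to the unbounded setting. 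I would isolate this as the one place needing care; the bounded case is then the special instance where $\cD(\mR^{1/2})=\cH$ and all closed-range subtleties evaporate. The algebraic identities and the equivalence $\ker(G^*)=\{0\}\Leftrightarrow$ (kernel intersection trivial) are routine.
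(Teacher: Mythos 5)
Your proof is correct and takes essentially the same route as the paper: both reduce the three-way equivalence to the closed range theorem for closed densely defined operators (the paper cites Brezis, Theorem~2.21 directly, whereas you unpack it as a cyclic chain of implications), and your summing operator $G$ and its adjoint $G^*$ are exactly the paper's $\mG^*$ and $\mG$. The domain subtlety you correctly flag in the unbounded case is handled in the paper by defining the closed stacking operator $\mG:\cD(\mR^{1/2})\to\cH^{m+1}$ from the start and computing its adjoint $\mG^*$, noting explicitly that $\cD(\mG^*)$ may be a proper superset of $\cD(\mR^{1/2})^{m+1}$, which sidesteps the issue that your $G$ restricted to $\cD(\mR^{1/2})^{m+1}$ need not be closed.
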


\begin{proof}
To prove Lemma~\ref{lem:Op-Equivalence} we will make use of the (equivalent) characterizations of surjective operators in~\cite[Theorem 2.21]{Br11}.
To this end, we will introduce the following Hilbert spaces and operators. 

For $m\in\N_0$, the direct sum $\cH^{m+1}:=\bigoplus_{j=0}^m \cH =\{(y_0,\ldots,y_m)|\ y_j\in\cH,\ j=0,\ldots,m\}$ endowed with the inner product $\ip{x}{y}_{\cH^{m+1}}:=\sum_{j=0}^m \ip{x_j}{y_j}$; $x,y\in\cH^{m+1}$ is again a Hilbert space.
Define the linear operator
\[
 \mG:\
\cD(\mR^{1/2}) \subset\cH \to \cH^{m+1}\,, \qquad
x \mapsto \begin{bmatrix} \mR^{1/2}x \\ \mR^{1/2}\mJ^* x \\ \vdots \\ \mR^{1/2} (\mJ^*)^m x \end{bmatrix}\ .
\]
Since $\mR^{1/2}$ is closed and $\mJ$ is bounded, also the operators $\mR^{1/2}(\mJ^*)^j$ are closed. Hence the operator~$\mG$ is densely defined (in both cases of assumptions $\cD(\mG):=\cD(\mR^{1/2})$ is dense in $\cH$) and closed. 
Its adjoint reads 
\begin{align*}
 \mG^*:\ 
\cD(\mG^*) \subset\cH^{m+1} &\to \cH\,, \quad\mbox{with } \cD(\mG^*) \supset \big(\cD(\mR^{1/2})\big)^{m+1}\,, \\
y &\mapsto \sum_{j=0}^m \mJ^j \mR^{1/2} y_j\ .
\end{align*}

Next, we identify the operator $\mG$ and its adjoint $\mG^*$ in our statement with the ones in~\cite[Theorem 2.21]{Br11}:
First,
\begin{equation}\label{rangeG*}
 \range(\mG^*)
=\spn \Bigg(\bigcup_{j=0}^m \range\big(\mJ^j \mR^{1/2}\big)\Bigg) \stackrel{!}{=} \cH 
\end{equation}
gives the equivalence of the conditions \ref{B:G*_surjective} and (a) in \cite[Theorem 2.21]{Br11}, where ``$\stackrel{!}{=}$'' indicates the condition to be satisfied. 
Note that in \eqref{rangeG*} each $\range\big(\mJ^j \mR^{1/2}\big)$ has to be evaluated on the $j$-th component of $\cD(\mG^*)$, which may indeed be a proper superset of $\cD(\mR^{1/2})$. 

Second,
\[
 \ker(\mG)
= \bigcap_{j=0}^m \ker\big(\mR^{1/2} (\mJ^*)^j\big)
= \bigcap_{j=0}^m \ker\big(\mR^{1/2} \mJ^j\big)
\stackrel{!}{=} \{0\}\ .
\]
Moreover, due to the assumptions, $\range(\mG)$ is closed if and only if $\range(\mG^*)$ is closed.
This gives the equivalence of the conditions \ref{B:KRC'op} and (c) in \cite[Theorem 2.21]{Br11}. 

Third, 
\begin{multline*}
 \| \mG x\|_{\cH^{m+1}}^2 
= \sum_{j=0}^m \| (\mG x)_j\|^2
= \sum_{j=0}^m \| \mR^{1/2}\ (\mJ^*)^j x\|^2  
\stackrel{!}{\ge} \kappa \|x\|^2 
 \\
 \text{for all } x\in \cD(\mR^{1/2})=\cD(\mG) \ .
\end{multline*}
This gives the equivalence of the conditions \ref{B:Tmop} and (b) in \cite[Theorem 2.21]{Br11}. 

The three equivalences established in Theorem~\cite[Theorem 2.21]{Br11} thus imply the three equivalences of Lemma~\ref{lem:Op-Equivalence}.
\end{proof}

\begin{remark}[Connection to control theory: infinite dimensional setting]
Consider the control system~\eqref{ODE:AxBu} where, for simplicity, $\mA,\mB\in\cL(\cH)$ operate on the same Hilbert space~$\cH$.
In the infinite dimensional setting many more concepts of controllability exist, 
e.g.~depending on the operator domains~$\cD(\mA),\cD(\mB)\subset\cH$.
In \cite[Theorem 3.18]{CuPr78} and \cite[Theorem 6.2.27]{CuZw20} it is observed that for $\mA,\mB\in\cB(\cH)$, system~\eqref{ODE:AxBu} is \emph{exactly controllable} if and only if Condition \ref{B:G*_surjective} in Lemma \ref{lem:Op-Equivalence} holds.

\end{remark}

After recalling
 the basic hypocoercivity concepts, in the next two sections we apply the techniques for the analysis of different variants of the Oseen equations.
We consider the Oseen equations as a constrained PDE model on a torus. Since this allows for a modal decomposition, it reduces  to an infinite system of DAEs. 
Depending on the detailed shape of the drift and diffusion terms in the Oseen equation, these models exhibit a wide range of hypocoercivity phenomena: 
it may be coercive (see \S\ref{ssec:Stokes}), hypocoercive with index 1 (see \S\ref{ssec:nonconst-drift}), or not hypocoercive (see \S\ref{ssec:const-drift}).

\section{Isotropic Oseen equation on the 2D torus} \label{ssec:Stokes}


As first model  we consider the time-dependent, incompressible Oseen equation of fluid dynamics with \emph{isotropic} viscosity on the 2D torus $\T^2 := (0,2\pi)^2$,
\begin{subequations}\label{eq:Oseen}
\begin{align}
 u_t 
&=-(b\cdot\nabla)u -\nabla p +\nu \Delta u\,, 
&& t>0\,, \quad\text{on }\T^2\,, \label{eq:Oseen:1a} 
\\
 0 
&=-\diver u \,, 
&& t\geq 0\,, \label{eq:Oseen:1b} 
\end{align}
\end{subequations}
for the vector-valued velocity field~$u=u(x,t)$ and the scalar pressure~$p =p(x,t)$ in the space variable~$x\in\T^2$ and the time variable~$t\geq 0$.
The constant $\nu>0$ denotes the viscosity coefficient and $b\in\R^2$ is the constant drift velocity. 
Note that since in~\eqref{eq:Oseen} diffusion acts uniformly in all directions, we call the Oseen model~\eqref{eq:Oseen} \emph{isotropic}.

For \eqref{eq:Oseen} we assume periodic boundary conditions in both $u$ and $p$. 
Hence, this model actually could be simplified right away:
Taking the divergence of the first equation in~\eqref{eq:Oseen} yields
$\Delta p(\cdot,t) =0$ and hence~$p(\cdot,t)$ is constant in~$x$.
It also shows that the vector-valued transport-diffusion equation 
\begin{equation}\label{trans-diff}
  u_t=-(b\cdot\nabla)u +\nu \Delta u
\end{equation}
preserves the incompressibility if the initial condition satisfies $\diver u(0)=0$, which is assumed in the sequel. 
Since it is known that in this case the normal component of $u$ has a periodic extension \cite[\S IX.1.2+3]{DL3}, \cite[\S II.5]{FMRT01}, 
it follows that, for any initial condition 
\[
u(0)\in H_{per}(\diver 0, \T^2):=\{u\in (L^2(\T^2))^2\, |\,\diver u=0\},
\] 
equation \eqref{trans-diff} and hence \eqref{eq:Oseen} has a unique smooth solution for $t>0$, and its explicit Fourier representation can be obtained from the first equation of the Fourier expansion \eqref{eq:Stokes:Fourier} below.

Here, in order to pave the way for more general applications, we proceed differently and employ  negative hypocoercivity of matrix pencils in semi-dissipative Hamiltonian DAEs. So we ignore this possible simplification and rather follow our discussion from~\cite[\S3]{AAM21}. 
The following analysis is an extension of \S4.1 in \cite{AAM21}, which considered the Stokes equation.

Due to the periodic setting, we consider the Fourier expansion of~\eqref{eq:Oseen} with
\begin{equation}\label{Fourier}
u(x,t)
= \sum_{k\in\Z^2} \phi_k(t) e^{i k\cdot x} \,, \qquad
p(x,t)
= \sum_{k\in\Z^2} p_k(t) e^{i k\cdot x} \,.
\end{equation}
Since $u$ and $p$ are real valued, their Fourier coefficients~$\phi_k(t)\in\C^2$, $p_k(t)\in\C$, $k\in\Z^2$, obey the symmetry $\bar\phi_k=\phi_{-k}$, $\bar p_k=p_{-k}$ and they satisfy the decoupled evolution equations
\begin{equation}\label{eq:Stokes:Fourier}
\begin{cases}
\ddt \phi_k &= -i k p_k -i(b\cdot k)\phi_k -\nu |k|^2 \phi_k \ , \quad t>0 \ , \\
0 &=-i k\cdot \phi_k \ .
\end{cases}
\end{equation}
The mode~$k=0$ satisfies $\phi_0(t)=$ const (corresponding to momentum conservation) and~$p_0(t)=$ arbitrary.
To enforce unique solvability of~\eqref{eq:Oseen}, we require the pressure as $p_0(t)\equiv 0$. Hence, this flow is a \emph{transversal wave} \cite[p.73]{La1996}. 
For~$k\ne 0$ we write~\eqref{eq:Stokes:Fourier} as a system of decoupled DAEs.
%
\begin{equation} \label{DAE:Stokes}
 \mE \dot{w}_k (t) =\mA_k \xk \ , \quad t\geq 0\ ,
\end{equation}
for~$\xk :=[\ukI,\ukII,p_k]^\top \in\C^3$ with the matrices~$\mE:=\diag(1,1,0)$ and
\begin{equation} \label{DAE:Stokes:A}
\mA_k :=
 \begin{bmatrix}
  -i(b\cdot k)-\nu |k|^2 & 0 & -i k_1 \\
  0 & -i(b\cdot k)-\nu |k|^2 & -i k_2 \\
  -i k_1 & -i k_2 & 0
 \end{bmatrix} \ .
\end{equation}
The modal functions~$\xk(t)$, $k\in\Z^2$ correspond to the function~$x(t)$ in~\S\ref{sec:hypocoercivity} and \cite{AAM21}, since~$x=[x_1, x_2]^\top$ is used here for the spatial variable.
Following the notation from~\S\ref{sec:hypocoercivity}, we  decompose $\mA_k$ as $\mA_k =\mJ_k -\mR_k$ with $\mR_k :=\diag(\nu |k|^2,\nu |k|^2, 0)$ and
\begin{equation} \label{DAE:Stokes:J}
\mJ_k :=
 \begin{bmatrix}
  -i(b\cdot k) & 0 & -i k_1 \\
  0 & -i(b\cdot k) & -i k_2 \\
  -i k_1 & -i k_2 & 0
 \end{bmatrix} \ .
\end{equation}
In order to determine the hypocoercivity index of~\eqref{DAE:Stokes} we employ the unitary  transformation of~\eqref{DAE:Stokes} to  staircase form via the Algorithm in~\cite[Lemma 5]{AAM21}, which we recall as Lemma~\ref{lem:tSF} in Appendix~\ref{app:DAEs:hypocoercivity}.
Applying the staircase algorithm to the triple~$(\mE,\mJ_k,\mR_k)$ yields 
\begin{subequations}\label{checkJk}
\begin{align} 
\widecheck \mJ_k
&=\mP_k \mJ_k \mP_k^H
=\begin{bmatrix}
  -i(b\cdot k) & 0 & -|k| \\
  0 & -i(b\cdot k) & 0 \\
  |k| & 0 & 0
 \end{bmatrix} \ , \quad
 \\
\widecheck \mR_k
&=\mP_k \mR_k \mP_k^H
=\mR_k \ , \qquad
\widecheck \mE
=\mP_k \mE \mP_k^H
=\mE \ , 
\end{align}
\end{subequations}
where
\begin{equation} \label{ex:Stokes:Pk}
\mP_k
=\tfrac1{|k|}
 \begin{bmatrix}
  k_1 & k_2 & 0 \\
  -k_2 & k_1 & 0 \\
  0 & 0 & i|k|
 \end{bmatrix} \ .
\end{equation}
The evolution of~\eqref{DAE:Stokes} translates via $y_k =[y_{k,1},y_{k,2},y_{k,3}]^\top :=\mP_k \xk$ 
into the staircase form
\begin{equation} \label{DAE:Stokes:SF}
 \widecheck \mE \dot y_k (t) =(\widecheck \mJ_k -\widecheck \mR_k) y_k(t) \ , \quad t\geq 0 \ .
\end{equation}
Note that in the staircase form~\eqref{staircase:EJR} the third and fifth blocks are missing, i.e. $n_3=0$ and $n_5=0$.
Setting $y_k:=\mP_k w_k$, $y_k=:[y_{k,1}, y_{k,2}, y_{k,3}]^\top$ and using~\eqref{DAE:Stokes:SF}, we obtain
\[
 y_{k,1} =\tfrac1{|k|} k\cdot \phi_k =0 \,, \qquad
 y_{k,3} = i p_k =0 \qquad
 \text{for all } k\ne 0 .
\]
Hence, following \eqref{underlyingODE:y} in Appendix~\ref{app:DAEs:hypocoercivity}, the dynamic part of
the DAE~\eqref{DAE:Stokes:SF} is given as
\begin{equation} \label{DAE:Stokes:y2}
 \dot y_{k,2} =-i(b\cdot k) y_{k,2}-\nu |k|^2 y_{k,2} \ , \quad k\ne 0 \ .
\end{equation}
Therefore, the DAE systems~\eqref{DAE:Stokes:SF}, for $k\ne 0$, exhibit non-trivial dynamics with HC-index $0$, see Definition~\ref{def:DAE:mHC}.
Equation~\eqref{DAE:Stokes:y2} is the modal decomposition of the (dissipative) transport-diffusion equation \eqref{trans-diff} on $\T^2$.
Hence, the solution $(u(\cdot,t),\,p(\cdot,t))$ of the Oseen equation~\eqref{eq:Oseen} converges, as $t\to \infty$, to the constant (in $x$ and $t$) equilibrium $(\phi_0,p_0)\in\R^3$, for $p_0=0$ with the exponential decay rate~$\mu=\min_{k\ne 0} (\nu|k|^2) =\nu$.

The transformation $y_k :=\mP_k w_k$ is related to the Helmholtz-Leray decomposition of vector fields on bounded domains, see~\cite[\S II.3]{FMRT01}:
\begin{remark}[Helmholtz-Leray decomposition]
Note that space-periodic vector fields $u\in H_{per}(\diver,\T^2)$ with $H_{per}(\diver, \T^2):=\{u\in (L^2(\T^2))^2\, |\,\diver u\in L^2(\T^2)\}$ can be written as
\begin{equation} \label{Helmholtz-Leray}
 u =\nabla q +v \qquad\text{with } v\in H_{per}(\diver 0,\T^2)\ ,\quad q\in H^1(\T^2)\ .
\end{equation}
For $u\in H_{per}(\diver,\T^2)$, the expression~\eqref{Helmholtz-Leray} is called the \emph{Helmholtz-Leray decomposition} of~$u$, and~$q$ is unique (up to an additive constant).
Moreover, the map $\Pi_L: H_{per}(\diver,\T^2)\to H_{per}(\diver 0,\T^2)$, $u\mapsto v[u]$ is well-defined and a projection onto the divergence-free vector fields.
The map $\Pi_L$ is called the \emph{Leray projector} [for space-periodic vector fields].
\newline
To determine the Leray projector for $u\in H_{per}(\diver,\T^2)$ in the Fourier representation~\eqref{Fourier}, we extract the leading $2\times2$-subblock from $\mP_k$, $k\in\Z^2\setminus\{0\}$ and define
\begin{equation} 
\widecheck\mP_k
:=\tfrac1{|k|}
 \begin{bmatrix}
  k_1 & k_2 \\
 -k_2 & k_1 \\
 \end{bmatrix} \ , \qquad
 k\in\Z^2\setminus\{0\}\ .
\end{equation}
Then, for the modes $k\in\Z^2\setminus\{0\}$, the Leray projector $\Pi_L$ is given as
\[
 \begin{bmatrix} \phi_{k,1} \\ \phi_{k,2} \end{bmatrix}
 \mapsto 
 \widecheck\mP_k^* 
 \begin{bmatrix} 0 & 0 \\ 0 & 1 \end{bmatrix}
 \widecheck\mP_k
 \begin{bmatrix} \phi_{k,1} \\ \phi_{k,2} \end{bmatrix}\ .
\]
\end{remark}

\makeorange{
\begin{remark}[Possible extensions]
\begin{enumerate}
 \item
Our hypocoercivity analysis can be extended to isotropic Oseen equations on the 3D torus: 
A modal decomposition yields again a family of decoupled DAE-systems (but) with more complicated system matrices.
See also the hypocoercivity analysis of BGK-type equations on 1D, 2D, and 3D tori in~\cite{AAC18}.
 \item
In case of the isotropic Oseen equation~\eqref{eq:Oseen} in the whole space and in exterior domains, solutions exhibit only algebraic-in-time decay, see~\cite{BaJi08}.
For kinetic equations in whole space and without confinement, the hypocoercivity analysis has been adapted to prove the expected algebraic [instead of exponential] temporal decay, see~\cite{BoDoMiMoSc20,ArDoScWo21}.
Similarly, the hypocoercivity analysis can be extended to Oseen equations in whole space.
\end{enumerate}
\end{remark}
}

When modifying~\eqref{eq:Oseen} into an anisotropic Oseen equation with viscosity only in the $x_2$-direction, the dynamics becomes more interesting: 
Depending on the prescribed convection field~$b$, the dynamics may be hypocoercive or not. 
This is the topic of the next section.

\section{Anisotropic Oseen-type equations on the 2D torus}
\label{ssec:Oseen:anisotropic}
\makeorange{Motivated by rotating fluids in geophysics}, the 3D Navier-Stokes equation with anisotropic viscosity, where the vertical viscosity was reduced or even zero, has been studied, \makeorange{see~\cite{CDGG,Pa,Pa05periodic,CheDesGalGre06}}.
In analogy, we shall study now Oseen-type models~with anisotropic viscosity on the torus (\makeorange{similar to \cite{TW96,CheDesGalGre06,Mik23}}).
We consider these equations as a useful simplified mathematical model to illustrate (hypo)coercivity, rather than considering them for their physical interest (similar to~\cite{FMRT01,T95,T97}).
For simplicity of the presentation we again confine ourselves here to the 2D case, i.e. to $\T^2 :=(0,2\pi)^2$:
The model
\begin{subequations}\label{model:Oseen:anisotropic}
\begin{align}
 u_t 
&=-(b(x)\cdot \nabla) u -\nabla p +\nu \partial_{x_2}^2 u \,, 
&& t>0 ,\ \text{on }\T^2 , \label{model:Oseen:anisotropic:1a} \\
 0
&=-\diver u\ , 
&& t\geq 0\ , \label{model:Oseen:anisotropic:1b}
\end{align}
\end{subequations}
is subject to periodic boundary conditions;
it has prescribed transport with a drift velocity vector $b(x)\in\R^2$ which may depend on $x\in\T^2$, and diffusion only in~$x_2$ (hence, we call the  model~\eqref{model:Oseen:anisotropic} \emph{anisotropic}).

\makeorange{
\begin{remark}
Navier-Stokes equation with anisotropic viscosity in~3D, are used to model rotating fluids in geophysics~\cite{CDGG,Pa,Pa05periodic,CheDesGalGre06}; but also to model \emph{anisotropic fluids}, e.g.~with
relaxed ellipticity condition on the viscosity tensor~\cite{Mik23}. 
Linearizing the Navier-Stokes equation with anisotropic viscosity around a constant vector field~$b\in\R^2$ yields the anisotropic Oseen equation~\eqref{model:Oseen:anisotropic}.

Moreover, Oseen equations~\eqref{eq:Oseen} with space-dependent vector field $b=b(x)$ have been derived to describe the motion of a Navier–Stokes liquid around a rotating rigid body, see~\cite[\S VIII]{Gal11}.
This is strictly speaking not an Oseen equation anymore, since the derivation of Oseen needs that $b$ is constant in space.

The periodic setting may be used to model fluid flow in periodic composite structures, and is a basic setting in homogenisation theories for inhomogeneous fluids, see~\cite{Mik23}.
\end{remark}
}
We analyze this model in two variants: For a constant convection field~$b\in\R^2$, the modes still decouple but the generator of the evolution is (depending on $b$) either coercive or not even hypocoercive.
If the convection field is non-constant in space, e.g. if
$b=[\sin (x_2),0]^\top$, then 
the spatial modes are coupled and the generator of the (infinite dimensional) problem becomes hypocoercive.

\subsection{Oseen-type equation with constant drift velocity}
\label{ssec:const-drift}

Let us first consider \eqref{model:Oseen:anisotropic} as an evolution equation on the space of divergence-free vector fields. 
In the case of a constant drift velocity vector~$b\in\R^2$, we can argue like in~\S\ref{ssec:Stokes} to find $p(\cdot,t)=\textrm{const}$, and we  again normalize the pressure as $p\equiv 0$.
Then, the (linear) generator of~\eqref{model:Oseen:anisotropic} takes the shape $-\mC =-b\cdot \nabla +\nu \partial_{x_2}^2$ and it acts identically on the two components $\uI$ and $\uII$. 
Proceeding as in \S\ref{ssec:Stokes} and using the analog of the modal evolution equation \eqref{eq:Stokes:Fourier} shows that, for any initial condition $u(0)\in H_{per}(\diver 0, \T^2)$,  Equation \eqref{model:Oseen:anisotropic} with $b\in\R^2$ has a unique mild solution for $t>0$. 
Hence, the operator~$-\mC$ generates a $C_0$-semigroup on~$\cH :=H_{per}(\diver 0, \T^2)$, see also \cite{EnNa00,Pazy}.
But due to the degenerate, anisotropic diffusion and the lack of hypocoercivity (see Proposition \ref{prop:Oseen:constantU:notHC} below), solutions are in general not smooth here. 
This is illustrated by an example, see~\eqref{counterex:Oseen:constantU:notHC} below.

In order to check the hypocoercivity of~$\mC$ as characterized in~\eqref{exp-decay}, we determine the kernel of $\mC$ in~$\cH$:
\begin{lemma}\label{lem:Oseen:constantU:kernel}
Let $b=[b_1,b_2]^\top\in\R^2$ be constant in~\eqref{model:Oseen:anisotropic}.
\begin{itemize}
 \item 
If $b_1\ne 0$ then $\ker\mC =\R^2$, i.e. the constant-in-$x$ flows.
Moreover, the $L^2$-orthogonal complement of $\ker\mC$ in~$\cH$ is given by \[ \{u\in H_{per}(\diver 0, \T^2) \ |\ \int_{\T^2} u\dd[x]=0\} \ .\]
 \item
If $b_1=0$ then the kernel of~$\mC$ in~$\cH$ is given as 
\[ 
\ker\mC =\{ u\in\cH:\ u_1=\const\ ,\quad u_2=u_2(x_1)\}. 
\]
Moreover, the $L^2$-orthogonal complement of $\ker\mC$ in~$\cH$ is given by 
\begin{equation} \label{tildeH:b_1:0} 
\{u\in H_{per}(\diver 0, \T^2) \ |\ \int_{\T^2} u\dd[x]=0\ ,\quad u_2=u_2(x_2) \}.
\end{equation}
\end{itemize}
\end{lemma}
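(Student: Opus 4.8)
The plan is to diagonalize everything by Fourier series, using that for constant $b$ the operator $\mC=b\cdot\nabla-\nu\partial_{x_2}^2$ has constant coefficients and acts as one and the same scalar operator on each component. Write $u=\sum_{k\in\Z^2}\widehat u_k\,e^{\ii k\cdot x}$ with $\widehat u_k\in\C^2$; then $u\in\cH$ is equivalent to the incompressibility relations $k\cdot\widehat u_k=0$ for all $k$ together with $\sum_k|\widehat u_k|^2<\infty$, and reality of $u$ gives $\widehat u_{-k}=\overline{\widehat u_k}$. In this basis $\widehat{(\mC u)}_k=\lambda_k\widehat u_k$ with the \emph{scalar} symbol $\lambda_k:=\ii(b\cdot k)+\nu k_2^2$, so $u\in\ker\mC$ iff $\lambda_k\widehat u_k=0$ for every $k$; since $\nu>0$ and $b\cdot k\in\R$, one has $\lambda_k=0$ precisely when $k_2=0$ and $b\cdot k=b_1k_1=0$.

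From here the kernel follows by case distinction. If $b_1\neq0$, the only $k$ with $\lambda_k=0$ is $k=0$, so $u\in\ker\mC$ forces $\widehat u_k=0$ for $k\neq0$, i.e.\ $u$ is a constant field; by reality $\widehat u_0\in\R^2$, and conversely every constant field lies in $\cH$ and in $\ker\mC$, so $\ker\mC=\R^2$. If $b_1=0$, then $\lambda_k=0$ iff $k_2=0$, so $u\in\ker\mC$ iff $\widehat u_k=0$ whenever $k_2\neq0$, i.e.\ $u$ depends only on $x_1$; imposing $k\cdot\widehat u_k=0$ on the surviving modes $k=(k_1,0)$ gives $k_1\widehat u_{1,(k_1,0)}=0$, hence $\widehat u_{1,k}=0$ for $k_1\neq0$ (equivalently $u_1\equiv\const$), while $u_2=u_2(x_1)$ remains otherwise arbitrary in $L^2$. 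This is the stated description of $\ker\mC$ in both cases.

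For the orthogonal complement: when $b_1\neq0$, $\langle u,c\rangle_{L^2(\T^2)}=c\cdot\int_{\T^2}u\dd[x]$ for constant $c$, so orthogonality to $\ker\mC=\R^2$ is equivalent to $\int_{\T^2}u\dd[x]=0$; since any $u\in\cH$ splits orthogonally into its $x$-mean (a constant field, again in $\cH$) and a zero-mean remainder, this set is precisely $(\ker\mC)^\perp$. When $b_1=0$, Parseval shows that $u\in\cH$ is orthogonal to every element of $\ker\mC$ iff $\widehat u_{1,0}=0$ and $\widehat u_{2,(k_1,0)}=0$ for all $k_1\in\Z$; combining this with the incompressibility relations (which force $\widehat u_{1,(k_1,0)}=0$ for $k_1\neq0$ as well, so that $\int_{\T^2}u\dd[x]=0$), one then rewrites the remaining constraints in the spatial form asserted in the lemma.

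The kernel computations and the $b_1\neq0$ complement are routine once the symbol $\lambda_k$ is written down. The delicate point is the $b_1=0$ orthogonal complement: there the divergence-free constraint couples $\widehat u_{1,k}$ and $\widehat u_{2,k}$ across modes, so one must keep careful track of exactly which Fourier coefficients are forced to vanish before translating back to a concise spatial description, and verify that precisely these (and no further) conditions remain — this bookkeeping, rather than any analytic difficulty, is the main obstacle.
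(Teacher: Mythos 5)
Your Fourier-mode approach to the kernel is correct in both cases and recovers the paper's kernel computation by a more transparent route: the paper argues directly at the PDE level (the periodic solutions of $b_1\partial_{x_1}v = -b_2\partial_{x_2}v+\nu\partial_{x_2}^2 v$, resp.\ $0=-b_2\partial_{x_2}v+\nu\partial_{x_2}^2 v$, are constants, resp.\ constant in $x_2$), while your scalar symbol $\lambda_k=\ii(b\cdot k)+\nu k_2^2$ makes the case distinction visible at a glance. The $b_1\ne 0$ orthogonal complement is also handled correctly.

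The $b_1=0$ orthogonal complement is where your write-up runs into trouble, and the difficulty you flag as ``delicate bookkeeping'' is not bookkeeping: the final translation cannot be carried out, because the spatial set \eqref{tildeH:b_1:0} is \emph{not} equal to $(\ker\mC)^\perp$. Your Fourier conditions $\widehat u_{1,0}=0$ and $\widehat u_{2,(k_1,0)}=0$ for all $k_1\in\Z$ are the complete orthogonality conditions against $\ker\mC=\{(c,f(x_1))\}$; combined with $\diver u=0$ (which additionally forces $\widehat u_{1,(k_1,0)}=0$ for $k_1\ne 0$ and $\widehat u_{2,(0,k_2)}=0$ for $k_2\ne 0$) they say exactly that all $k_2=0$ Fourier modes vanish. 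But the coefficients $\widehat u_{2,k}$ with $k_1\ne 0$ and $k_2\ne 0$ remain unconstrained, so $u_2$ may depend on $x_1$. Concretely, $u=(\cos(x_1+x_2),\,-\cos(x_1+x_2))$ is divergence-free, has only modes $k=\pm(1,1)$, satisfies every orthogonality condition (a direct check gives $\langle u,(c,f(x_1))\rangle_{L^2}=0$), hence $u\in(\ker\mC)^\perp$, and yet $u_2$ is not a function of $x_2$. Conversely, inside \eqref{tildeH:b_1:0} the conditions $u_2=u_2(x_2)$ and $\diver u=0$ give $\partial_{x_1}u_1=-u_2'(x_2)$, which is $x_1$-independent and therefore zero by $2\pi$-periodicity; so $u_2\equiv\const$ and then $\int u_2=0$ forces $u_2\equiv0$, i.e.\ \eqref{tildeH:b_1:0} collapses to $\{(u_1(x_2),0)\ |\ \int u_1=0\}$, a proper subspace of the actual complement. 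The spatial form your Fourier conditions actually encode is
\[
  (\ker\mC)^\perp
  =\Big\{u\in\cH\ \Big|\ \int_{\T^2}u_1\dd[x]=0,\quad \int_0^{2\pi}u_2(x_1,x_2)\dd[x_2]=0\ \text{for a.e.\ }x_1\Big\}\,,
\]
i.e.\ orthogonality to all functions of $x_1$ in $L^2(\T^2)$ means vanishing $x_2$-mean, not $x_2$-dependence. So the lemma's statement of the $b_1=0$ complement (and the paper's one-line justification of it) is incorrect, and carrying your Fourier computation to its conclusion would expose this rather than confirm it. (The coercivity deduced from \eqref{tildeH:b_1:0} in the remark that follows the lemma is nonetheless true on the genuine $(\ker\mC)^\perp$, since every surviving mode has $k_2\ne 0$, so $\langle u,\mC u\rangle=\nu\sum_k k_2^2|\widehat u_k|^2\ge\nu\|u\|^2$.)
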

\begin{proof}
If $b_1\ne0$ then $\mC u=0$ reads $b\cdot \nabla u_j =\nu \partial_{x_2}^2 u_j$; $j=1,2$.
The space-``time'' periodic parabolic equation $b_1 \partial_{x_1} v =-b_2 \partial_{x_2} v +\nu \partial_{x_2}^2 v$ admits only constant periodic solutions.
The Hilbert space~$\cH$ is endowed with an $L^2$-inner product, which implies the final statement.

If $b_1=0$ then $\mC u=0$ reads $0 =-b_2 \partial_{x_2} u_j +\nu \partial_{x_2}^2 u_j$; $j=1,2$.
The elliptic equation $0 =-b_2 \partial_{x_2} v +\nu \partial_{x_2}^2 v$ admits only periodic solutions which are constant in $x_2$ and $2\pi$-periodic in $x_1$.
Moreover, $0 =\diver u =\partial_{x_1}u_1(x_1) +\partial_{x_2}u_2(x_1) =\partial_{x_1}u_1(x_1)$ implies that $u_1 =\const$.
The Hilbert space~$\cH$ is endowed with an $L^2$-inner product, which implies the final statement.
\end{proof}
We continue to analyze the anisotropic Oseen-type model~\eqref{model:Oseen:anisotropic} with $b\in\R^2$ and $b_1\ne 0$.
Following the characterization of hypocoercivity in~\eqref{exp-decay} and Lemma~\ref{lem:Oseen:constantU:kernel}, we also introduce the Hilbert space 
\[
 \wcH :=\{ u\in\cH\ |\ \int_{\T^2} u \dd[x]=0 \},
\]
endowed with the $L^2$-inner product. Since the condition $\int_{\T^2} u \dd[x]=0$ is preserved under the flow, $-\mC$ also generates a $C_0$-semigroup on $\wcH$, see e.g. \cite{Pazy,EnNa00}. 
But related to its long-time behavior we have the following result:
\begin{proposition}\label{prop:Oseen:constantU:notHC}
Let $b\in\R^2$ be constant with $b_1\ne 0$.
Then, the operator~$\mC =b\cdot\nabla -\nu \partial_{x_2}^2$ is neither coercive nor hypocoercive in~$\wcH$.
\end{proposition}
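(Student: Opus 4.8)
The plan is to refute coercivity and hypocoercivity separately, in both cases exploiting a single explicit witness: the divergence-free, mean-zero vector field $u_0:=(0,\cos x_1)^\top$. It lies in $\wcH$ precisely because $b_1\ne 0$: one computes $\mC u_0=b\cdot\nabla u_0-\nu\partial_{x_2}^2 u_0=(0,-b_1\sin x_1)^\top\ne 0$, so $u_0\notin\ker\mC$, while $\langle u_0,(c_1,c_2)^\top\rangle_{L^2(\T^2)}=0$ for every constant field $(c_1,c_2)^\top$, i.e.\ $u_0\perp\ker\mC$ by Lemma~\ref{lem:Oseen:constantU:kernel}; hence $u_0\in(\ker\mC)^\perp=\wcH$.

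For the failure of coercivity, I would note that $b\cdot\nabla$ is skew-adjoint on space-periodic functions (integration by parts, no boundary terms, $b$ constant and real), so the self-adjoint part of $\mC$ is $\mC_H=-\nu\partial_{x_2}^2\ge 0$. Since $u_0$ is independent of $x_2$, $\partial_{x_2}^2 u_0=0$, whence $\langle\mC u_0,u_0\rangle_{\wcH}=\langle\mC_H u_0,u_0\rangle_{\wcH}=0$ although $\|u_0\|_{\wcH}>0$. Thus no $\gamma>0$ satisfies $\langle\mC u,u\rangle\ge\gamma\|u\|^2$ on $\wcH$, so $\mC$ is not coercive.

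For the failure of hypocoercivity, the key observation is that the subspace of $x_2$-independent fields in $\wcH$ is invariant under the flow and that the diffusion $\nu\partial_{x_2}^2$ acts as zero on it, so the evolution there reduces to pure transport in $x_1$. Concretely, solving $\dot u=-\mC u$ with $u(0)=u_0$ by the ansatz $u(x,t)=(0,v(x_1,t))^\top$ gives $\diver u\equiv 0$, $\partial_{x_2}^2u\equiv 0$, and $b\cdot\nabla u=(0,b_1\partial_{x_1}v)^\top$, so the equation collapses to $v_t=-b_1v_{x_1}$ with the explicit $2\pi$-periodic solution $v(x_1,t)=\cos(x_1-b_1t)$; equivalently, in the modal picture analogous to~\eqref{eq:Stokes:Fourier} the modes $k=(\pm 1,0)$ evolve by the unimodular factor $e^{\mp\ii b_1 t}$. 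This smooth function is the unique mild solution of the abstract Cauchy problem on $\wcH$, so $e^{-\mC t}u_0=(0,\cos(x_1-b_1t))^\top$, and since the $x_1$-translation is measure preserving on $\T^2$ we get $\|e^{-\mC t}u_0\|_{\wcH}=\|u_0\|_{\wcH}$ for all $t\ge 0$. Hence $\|e^{-\mC t}\|_{\cB(\wcH)}\ge 1$, which is incompatible with an estimate of the form~\eqref{exp-decay} on $\widetilde\cH=\wcH$ (such an estimate would force $\|e^{-\mC t}u_0\|_{\wcH}\to 0$); equivalently, the growth bound of the semigroup generated by $-\mC$ on $\wcH$ is nonnegative. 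Therefore $\mC$ is not hypocoercive in $\wcH$.

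I expect the only delicate point to be the bookkeeping around operator domains and the identification of $e^{-\mC t}u_0$: one must confirm that the $x_2$-independent ansatz really yields the mild solution of the abstract problem on $\wcH$, which follows from the modal decomposition used in \S\ref{ssec:const-drift} together with uniqueness of solutions; everything else is elementary. It is also worth stressing that the general unbounded-operator framework of Definition~\ref{def:unb-op:HC-index} and Lemma~\ref{lem:Op-Equivalence} does not apply here, since $\mJ=-b\cdot\nabla$ is unbounded, which is why the argument must be carried out directly at the level of the semigroup.
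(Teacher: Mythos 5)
Your proof is correct and follows essentially the same strategy as the paper: you disprove coercivity with the $x_2$-independent witness $(0,\cos x_1)^\top$ (the paper uses $(0,\sin x_1)^\top$) by noting that the symmetric part of $\mC$ reduces to $-\nu\partial_{x_2}^2$, and you disprove hypocoercivity by exhibiting the same family of undamped traveling waves $(0,v(x_1-b_1t))^\top$ that violate \eqref{exp-decay}. Your closing remark that the unbounded-$\mJ$ situation falls outside the framework of Definition~\ref{def:unb-op:HC-index} is a sound observation that the paper leaves implicit.
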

\begin{proof}
First we introduce the maximal domain of $\mC$ in $\wcH$: $\cD(\mC)=\{u\in\wcH\ | \ \mC u\in (L^2(\T^2))^2\}$. Since $\mC$ has constant coefficients, $u\in \cD(\mC)$ implies $\mC u\in\wcH$. For future reference we also give their characterization in Fourier space (cf.\ \eqref{Fourier} and~\cite[\S II.5]{FMRT01}):
\begin{align}\label{D(C)-fourier}
  u\in \wcH \quad &\Leftrightarrow \quad 
  \{\phi_k\} \in \ell^2(\Z^2;\C^2)\;\mbox{ with } \phi_0=0,\; k\cdot \phi_k=0\;
  \mbox{for all}\ k\in\Z^2, \nonumber 
\\
  u\in \cD(\mC) \quad &\Leftrightarrow \quad 
  \{\phi_k\},\; \big\{\big(i(b\cdot k)+\nu k_2^2\big)\phi_k\big\} \in \ell^2(\Z^2;\C^2)\; \nonumber
\\
  &\hspace{8em} \mbox{ with } \phi_0=0,\; k\cdot \phi_k=0\;
 \mbox{for all}\ k\in\Z^2.
\end{align}

To investigate the coercivity of $\mC$, we compute for $u\in\cD(\mC)$:
\begin{equation}\label{C-coercive}
  \ip{u}{\mC u}_{\wcH}
 =\nu \int_{\T^2} |\partial_{x_2} u|^2 \dd[x]
 =\nu \int_{\T^2} \big[ (\partial_{x_2} \uI)^2 +\tfrac12 (\partial_{x_1} \uI)^2 +\tfrac12 (\partial_{x_2} \uII)^2 \big] \dd[x],
\end{equation}
where we have used that $\diver u=0$.
The last integral does not involve $\partial_{x_1} \uII$.
Hence, $u:=[0,\sin (x_1)]^\top$ is a counterexample to coercivity on~$\wcH$ since $\ip{u}{\mC u}_{\wcH} =0$.

If~$\mC$ was hypocoercive, the trivial solution~$u=0$ would be asymptotically stable on~$\wcH$ due to the decay estimate~\eqref{exp-decay}.
But for any $v\in H^1_{per}(\T)$ with $\int_\T v(x_1)\dd[x_1]=0$, the (undamped) traveling wave 
\begin{equation} \label{counterex:Oseen:constantU:notHC}
 u(x,t) =\begin{bmatrix} 0 \\ v(x_1 -b_1 t) \end{bmatrix}
\end{equation}
solves~\eqref{model:Oseen:anisotropic} in~$\wcH$, in violation of~\eqref{exp-decay}.
\end{proof}

This lack of hypocoercivity in the model~\eqref{model:Oseen:anisotropic} with constant $b\in\R^2$ and $b_1\ne 0$ can be understood quite easily:
It includes drift and diffusion in the $x_2$-direction, but the uniform transport in the $x_1$-direction does not entail mixing between the different vertical layers.
If the flow field has only a vertical component~$\uII$, possibly different in each vertical layer (as shown in~\eqref{counterex:Oseen:constantU:notHC}), then the flow field gets transported in the $x_1$-direction and remains incompressible.

The lack of hypocoercivity can also be verified by considering \eqref{model:Oseen:anisotropic} as a constrained partial differential equation (PDAE), and bringing its modal representation into staircase form, see Example~\ref{ex:Oseen:constantU} in Appendix~\ref{app:DAEs:hypocoercivity}.

\begin{remark}
In contrast to the above case $b_1\ne0$, for the anisotropic Oseen-type model~\eqref{model:Oseen:anisotropic} with $b\in\R^2$ and $b_1 =0$, we have to define the Hilbert space $\wcH$ as in~\eqref{tildeH:b_1:0} endowed with the $L^2$-inner product.
Using that $u\in\wcH$ satisfies $u_2=u_2(x_2)$ and using~\eqref{C-coercive}, this shows that $\ip{u}{\mC u}_{\wcH} \geq \nu \|u\|_{\dot{H}^1}^2 /2$ for all $u\in\cD(\mC)=\{u\in\wcH\,|\, \partial_{x_2}^2u\in (L^2(\T^2))^2\}$.
Due to the Poincar\'e inequality $\|u\|_{\dot{H}^1}\geq \|u\|_{L^2}$ for all $u\in\wcH\cap H^1(\T^2)$ (trivially obtained from the Fourier representation \eqref{Fourier}; see also~\cite{Ev10,FMRT01}), $\mC$ is coercive on (the smaller) Hilbert space $\wcH$ here.
\end{remark}

\subsection{Oseen-type equation with non-constant drift velocity}
\label{ssec:nonconst-drift}

As second example we consider the anisotropic  equation~\eqref{model:Oseen:anisotropic} with a non-constant drift field $b(x) =[b_1(x_2),0]^\top$ such that  
\begin{subequations}\label{model:Oseen:anisotropic_2}
\begin{align}
 u_t 
&=-b_1(x_2)\ \partial_{x_1} u -\nabla p +\nu \partial_{x_2}^2 u \,, 
&& t>0 ,\ \text{on }\T^2 , \label{model:Oseen:anisotropic_2:1a} \\
 0
&=-\diver u\ , 
&& t\geq 0\ . \label{model:Oseen:anisotropic_2:1b}
\end{align}
\end{subequations}
In order to simplify the detailed hypocoercivity analysis below, 
we choose $b_1(x_2) =\sin (x_2)$; in this case only neighboring $k_2$-modes are coupled. 
We conjecture that choosing another non-constant $x_2$-periodic drift field $b_1$ would lead to an analogous result, but it would need a more cumbersome analysis.  
\makeorange{In this direction, the existence of stationary solutions for the isotropic Oseen equation~\eqref{eq:Oseen} with space-dependent drift~$b=b(x)$ and $x\in\R^3$ has been studied, see~\cite{AmrCon11}.}

In contrast to \S\ref{ssec:const-drift}, this model does not preserve vertical layers of the flow field but rather mixes them, hence giving rise to hypocoercivity.
While the pressure could have been eliminated from the beginning in the model with constant drift term, this is not possible here.
Hence, \eqref{model:Oseen:anisotropic_2} with $b_1(x_2)$ has to be considered as a (true) constrained PDE.\\

\noindent
{\bf Analytic framework and well-posedness:}
For the analysis of this case we apply the divergence to the evolution equation~\eqref{model:Oseen:anisotropic_2:1a}, leading to the following condition on the pressure:
\begin{equation}\label{model:divOseen}
 \begin{cases}
  \Delta p =-\cos(x_2)\ \partial_{x_1} \uII , \quad \text{on } \T^2 , \\
  \text{periodic boundary conditions for $p$} , \\
  \int_{\T^2} p \dd[x] =0 ,
 \end{cases}
\end{equation}
where the last condition was added to ensure uniqueness of~$p$. 
For a given inhomogeneity $u\in (L^2(\T^2))^2$, Equation \eqref{model:divOseen} has a unique weak solution in the homogeneous Sobolev space $\dot H^1_{per}(\T^2)$, which we denote by $p =p[u]$. A standard elliptic estimate shows
\begin{equation}\label{p-u-estimate}
    \|p\|_{\dot H^1_{per}(\T^2)} \le \|u_2\|_{L^2(\T^2)/ \R} \le  \|u\|_{(L^2(\T^2)/ \R)^2}.
\end{equation}

The generator of the flow~\eqref{model:Oseen:anisotropic_2} with $b_1(x_2) =\sin (x_2)$ takes the form
\begin{equation}\label{Oseen:generator:sin}
 -\mC u 
 =-\sin(x_2)\ \partial_{x_1} u -\nabla p[u] +\nu \partial_{x_2}^2 u .
\end{equation}
Note that $\diver u=0$ and $p[u]$ from \eqref{model:divOseen} imply that $\diver(\mC u)=0$. The assertion $\int \mC u\dd[x]=0$ follows trivially from $\int u \dd[x]=0$.
As before, we choose the Hilbert spaces
\[
 \cH :=H_{per}(\diver 0, \T^2) ,\qquad
 \wcH :=\{ u\in\cH\ |\ \int_{\T^2} u \dd[x]=0 \},
\]
and the maximal domain of $\mC$ is $\cD(\mC)=\{u\in\wcH\ | \ \sin(x_2)\partial_{x_1} u-\nu\partial^2_{x_2}u \in (L^2(\T^2))^2\}$, since the linear map $u\mapsto \nabla p$ from \eqref{model:divOseen} is bounded on $(L^2(\T^2))^2$. 

With this framework we have the following result:
\begin{lemma}\label{lem:Oseen:sin}
Let $b_1(x_2)=\sin (x_2)$ in~\eqref{model:Oseen:anisotropic_2}.
Then:
\begin{enumerate}[(1)]
\item \label{Oseen:Ux2:kernel}
$\ker\mC =\R^2$, i.e. the constant-in-$x$ flows; and the $L^2$-orthogonal complement of $\ker\mC$ is $\wcH$.
\item \label{Oseen:Ux2:notC}
The operator~$\mC$ is not coercive on~$\wcH$.
\end{enumerate}
\end{lemma}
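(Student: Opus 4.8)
The plan is to reduce both assertions to the quadratic–form identity
\[
 \Re\ip{\mC u}{u}_{\wcH} = \nu\,\norm{\partial_{x_2}u}_{L^2(\T^2)}^2, \qquad u\in\cD(\mC),
\]
which is the exact analogue of~\eqref{C-coercive} for the non-constant drift $b_1(x_2)=\sin(x_2)$. To obtain it I would split $\ip{\mC u}{u}_{\wcH}$ into its three pieces. The viscous term gives $-\nu\ip{\partial_{x_2}^2 u}{u}=\nu\norm{\partial_{x_2}u}^2$ after one integration by parts in $x_2$ (no boundary terms, by periodicity). The pressure term vanishes, $\ip{\nabla p[u]}{u}=-\ip{p[u]}{\diver u}=0$, using $\diver u=0$, periodicity, and the a priori regularity $p[u]\in\dot H^1_{per}(\T^2)$ supplied by~\eqref{model:divOseen}--\eqref{p-u-estimate}. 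Finally, the transport term $\ip{\sin(x_2)\,\partial_{x_1}u}{u}$ is purely imaginary: integrating by parts in $x_1$ and using that $\sin(x_2)$ does not depend on $x_1$ turns it into $z=-\bar z$, so its real part is zero. Summing the three contributions yields the identity.

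For the kernel, the inclusion $\R^2\subset\ker\mC$ is immediate, since a constant field is divergence-free, is annihilated by $\partial_{x_1}$ and $\partial_{x_2}^2$, and makes the right-hand side of~\eqref{model:divOseen} vanish, so $p[u]\equiv 0$. Conversely, if $\mC u=0$ then the identity forces $\partial_{x_2}u\equiv 0$, i.e.\ $u=u(x_1)$; then $0=\diver u=\partial_{x_1}u_1$ forces $u_1$ constant, and $\mC u=0$ collapses to $\sin(x_2)\,[0,\partial_{x_1}u_2(x_1)]^\top+\nabla p[u]=0$. Its first component gives $\partial_{x_1}p[u]=0$, so $p[u]=p(x_2)$; inserting this into~\eqref{model:divOseen} gives $p''(x_2)=-\cos(x_2)\,\partial_{x_1}u_2(x_1)$, whose left-hand side is independent of $x_1$. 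Since $\cos(x_2)\not\equiv 0$, this forces $\partial_{x_1}u_2$ to be constant, hence $0$ by $2\pi$-periodicity in $x_1$; thus $u_2$ is constant and $u\in\R^2$, so $\ker\mC=\R^2$. The claim about the orthogonal complement is then elementary: with respect to the $L^2$-inner product on $\cH$, a field is orthogonal to all constant vectors precisely when it has zero mean, i.e.\ $(\ker\mC)^\perp\cap\cH=\wcH$.

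For the lack of coercivity I would simply exhibit one admissible witness with vanishing Dirichlet form in $x_2$, namely $u:=[0,\sin(x_1)]^\top$, the same field used in Proposition~\ref{prop:Oseen:constantU:notHC}. It is divergence-free, has zero mean, and lies in $\cD(\mC)$ because $\sin(x_2)\,\partial_{x_1}u=[0,\sin(x_2)\cos(x_1)]^\top\in (L^2(\T^2))^2$ and $\partial_{x_2}^2 u=0$; moreover $u\neq 0$. Since $\partial_{x_2}u=0$, the identity gives $\Re\ip{\mC u}{u}_{\wcH}=0$, so no inequality $\Re\ip{\mC u}{u}_{\wcH}\ge\gamma\norm{u}^2$ with $\gamma>0$ can hold, and $\mC$ is not coercive on $\wcH$. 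I do not anticipate a real obstacle: the only points requiring a little care are the justification that the pressure term integrates to zero (covered by~\eqref{p-u-estimate} and periodicity) and the use of $\cos(x_2)\not\equiv 0$ in the kernel step, which is precisely where the non-constancy of the drift $b_1$ is needed.
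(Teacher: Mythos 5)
Your proof is correct and follows essentially the same route as the paper: derive the identity $\Re\ip{\mC u}{u}_{\wcH}=\nu\|\partial_{x_2}u\|_{L^2}^2$, deduce from it that a kernel element satisfies $u=u(x_1)$ and then $u_1=\const$, close the kernel argument by a separation-of-variables/periodicity step, and use $u=[0,\sin(x_1)]^\top$ to defeat coercivity. The only tactical difference is in the last kernel step: you feed $p=p(x_2)$ into the Poisson problem~\eqref{model:divOseen} to conclude $\partial_{x_1}u_2=\const$, while the paper instead reads off the second component of $\mC u=0$ directly, namely $\sin(x_2)\,\partial_{x_1}u_2(x_1)+\partial_{x_2}p(x_2)=0$, and separates variables there; both are equally valid, and each uses periodicity of $u_2$ to force the separation constant to vanish.
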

\begin{proof}
\ref{Oseen:Ux2:kernel}
For $u\in\cD(\mC)$ we compute:
\[
0 
=\int_{\T^2} u\cdot \mC u\dd[x]
=\int_{\T^2} \big(\tfrac12 \sin(x_2)\ \partial_{x_1} |u|^2 +\nu |\partial_{x_2} u|^2\big) \dd[x]
=\nu \int_{\T^2} |\partial_{x_2} u|^2 \dd[x] ,
\]
to obtain $u=u(x_1)$, and hence $0=\diver u=\partial_{x_1} \uI$.
So, using $\uI=\textrm{const}$ in~\eqref{Oseen:generator:sin} yields $p=p(x_2)$ and 
\[
 \sin(x_2)\ \partial_{x_1} \uII(x_1) +\partial_{x_2} p(x_2) =0 ,
\]
or equivalently for $x_2\not\in\{0,\pi\}$:
\[
 \partial_{x_1} \uII(x_1)
 =-\frac{\partial_{x_2} p(x_2)}{\sin (x_2)}
 =\lambda
 \qquad 
 \text{with some } \lambda\in\R .
\]
The periodicity of $\uII$ implies $\lambda=0$ and hence $\uII=\textrm{const}$, $p=0$.

\ref{Oseen:Ux2:notC}
As in \S\ref{ssec:const-drift}, $u:=[0, \sin (x_1)]^\top \in \cD(\mC)$ is a counterexample to coercivity on~$\wcH$, since $\ip{u}{\mC u}_{\wcH} =0$, by noting that $p[u]=\frac12 \cos(x_2)\cos(x_1)$ solves \eqref{model:divOseen}.
\end{proof}

Next we discuss the existence and uniqueness of a mild solution to the initial value problem \eqref{model:Oseen:anisotropic_2} when eliminating the pressure via \eqref{model:divOseen}. The resulting PDE-evolution problem for $u$ only, with $u(0)\in\wcH$, will be analyzed with semigroup theory, see e.g.\ \cite{Pazy}. 

\begin{proposition}
    On $\wcH$, the operator $-\mC$ from \eqref{Oseen:generator:sin} satisfies the following properties:
    \begin{enumerate}
        \item[(1)] It is dissipative, densely defined, and closed.
        \item[(2)] It generates a $C_0$-semigroup of contractions on $\wcH$.
    \end{enumerate}
\end{proposition}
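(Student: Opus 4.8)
\noindent\emph{Proof plan.}
The plan is to apply the Lumer--Phillips theorem (see e.g.\ \cite{Pazy,EnNa00}): once $-\mC$ is known to be densely defined, dissipative and closed on $\wcH$, it generates a contraction $C_0$-semigroup as soon as $\range(\lambda I+\mC)=\wcH$ for one (hence every) $\lambda>0$.

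For~(1), the domain $\cD(\mC)$ contains every divergence-free, mean-zero trigonometric polynomial, and these are dense in $\wcH$ (in Fourier variables, $\wcH$ is the closed span of finitely supported sequences $(\phi_k)_{k\in\Z^2}$ with $\phi_0=0$, $k\cdot\phi_k=0$), so $-\mC$ is densely defined. For $u\in\cD(\mC)$, integration by parts gives
\[
 \Re\ip{\mC u}{u}_{\wcH}
 =\tfrac12\!\int_{\T^2}\!\!\sin(x_2)\,\partial_{x_1}|u|^2\dd[x]
 -\Re\!\int_{\T^2}\!\! p[u]\,\overline{\diver u}\dd[x]
 +\nu\!\int_{\T^2}\!\!|\partial_{x_2}u|^2\dd[x]
 =\nu\norm{\partial_{x_2}u}_{L^2}^2\ge0,
\]
because the first integral vanishes ($\partial_{x_1}\sin(x_2)=0$) and the second vanishes ($\diver u=0$); hence $-\mC$ is dissipative. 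For closedness, suppose $u_n\to u$ and $\mC u_n\to g$ in $\wcH$. Since $u\mapsto\nabla p[u]$ is bounded on $(L^2(\T^2))^2$ by~\eqref{p-u-estimate}, also $\nabla p[u_n]\to\nabla p[u]$, so $\sin(x_2)\partial_{x_1}u_n-\nu\partial_{x_2}^2u_n\to g-\nabla p[u]$ in $(L^2)^2$. As the differential operator $\sin(x_2)\partial_{x_1}-\nu\partial_{x_2}^2$ (smooth coefficients) is continuous on $\mathcal D'(\T^2)$, the distributional limit forces $\sin(x_2)\partial_{x_1}u-\nu\partial_{x_2}^2u=g-\nabla p[u]\in(L^2)^2$; hence $u\in\cD(\mC)$, $\mC u=g$, so $\mC$ (equivalently $-\mC$) is closed.

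For~(2) it remains to verify the range condition, and this is the step I expect to require the most care, since $\mC$ is not sectorial (diffusion acts only in $x_2$), so the perturbative/analytic shortcut used for the isotropic model in \S\ref{ssec:Stokes} is unavailable: $\sin(x_2)\partial_{x_1}$ is not $\partial_{x_2}^2$-bounded on divergence-free fields. Instead I would exploit that $\mC$ in~\eqref{Oseen:generator:sin} commutes with translations in $x_1$, hence block-diagonalizes over the Fourier variable $k_1\in\Z$: writing $\wcH=\bigoplus_{k_1\in\Z}\cH_{k_1}$, with $\cH_{k_1}$ the orthogonal span of the modes $e^{\ii(k_1,k_2)\cdot x}\phi$, $k_2\in\Z$, $\phi\perp(k_1,k_2)$, one has $\mC(\cD(\mC)\cap\cH_{k_1})\subset\cH_{k_1}$, and on $\cH_{k_1}$ one may write $\mC|_{k_1}=\mR_{k_1}-\mN_{k_1}$, where $\mR_{k_1}$ is the restriction of $-\nu\partial_{x_2}^2$ (self-adjoint, nonnegative, multiplication by $\nu k_2^2$ in $k_2$) and $\mN_{k_1}$ is the restriction of $u\mapsto-\bigl(\sin(x_2)\partial_{x_1}u+\nabla p[u]\bigr)$. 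Because $\sin(x_2)$ (and $\cos(x_2)$ in $p[u]$) couple only neighbouring $k_2$-modes while $\partial_{x_1}$ acts as multiplication by $\ii k_1$, $\mN_{k_1}$ is bounded on $\cH_{k_1}$ (with $\norm{\mN_{k_1}}\le|k_1|$), and the computation above shows it is skew-adjoint there. Hence, for $\lambda>0$,
\[
 \lambda I+\mC|_{k_1}
 =(\lambda I+\mR_{k_1})^{1/2}\bigl(I-(\lambda I+\mR_{k_1})^{-1/2}\mN_{k_1}(\lambda I+\mR_{k_1})^{-1/2}\bigr)(\lambda I+\mR_{k_1})^{1/2},
\]
whose middle factor is the identity plus a bounded skew-adjoint operator, hence boundedly invertible with inverse of norm $\le 1$; since $\norm{(\lambda I+\mR_{k_1})^{-1/2}}\le\lambda^{-1/2}$, the block $\lambda I+\mC|_{k_1}$ is invertible with $\norm{(\lambda I+\mC|_{k_1})^{-1}}\le\lambda^{-1}$, uniformly in $k_1$. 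Given $f\in\wcH$, solving block by block yields $u=\bigoplus_{k_1}(\lambda I+\mC|_{k_1})^{-1}f_{k_1}\in\wcH$ with $\norm u_{\wcH}\le\lambda^{-1}\norm f_{\wcH}$; applying $\mC$ to the partial sums $\sum_{|k_1|\le N}(\lambda I+\mC|_{k_1})^{-1}f_{k_1}$ and invoking the closedness from~(1) gives $u\in\cD(\mC)$ and $(\lambda I+\mC)u=f$, i.e.\ $\range(\lambda I+\mC)=\wcH$. (Alternatively, once~(1) holds, $\range(\lambda I+\mC)$ is automatically closed, so one only needs that each $\lambda I+\mC|_{k_1}$ has dense range, whence $w\perp\range(\lambda I+\mC)$ forces every Fourier block of $w$ to vanish; yet another route is a Galerkin truncation to finitely many modes---each truncated operator being an accretive matrix, so $\lambda I+\mC_N$ invertible---combined with the a priori bound $\nu\norm{\partial_{x_2}u_N}^2\le\norm f\,\norm{u_N}$ from dissipativity and a weak limit.) By Lumer--Phillips this yields~(2).
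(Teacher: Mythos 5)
Your proof is correct, and for part~(1) it is essentially the same as the paper's (same dissipativity computation; the paper invokes closability of dissipative operators from Pazy plus maximality of $\cD(\mC)$ rather than your direct sequential argument, but this is cosmetic). For part~(2) you take a genuinely different route. The paper's proof avoids the range condition entirely: it computes the adjoint $-\mC^*$ explicitly (same structure, transport term with opposite sign), checks by the same integration-by-parts that $-\mC^*$ is dissipative, and then invokes Corollary~1.4.4 of Pazy, the variant of Lumer--Phillips stating that a densely defined, closed, dissipative operator whose adjoint is also dissipative generates a contraction semigroup. That is a few lines and exploits the symmetric structure of $\mC$. Your argument instead verifies $\range(\lambda I+\mC)=\wcH$ directly, by observing that $\mC$ commutes with $x_1$-translations, block-diagonalizes in $k_1$ (with $k_1=0$ giving pure diffusion), and that on each block $\mC|_{k_1}=\mR_{k_1}-\mN_{k_1}$ splits into an unbounded self-adjoint nonnegative part and a \emph{bounded} skew-adjoint part, so the factorization $\lambda I+\mC|_{k_1}=(\lambda I+\mR_{k_1})^{1/2}(I-\mK)(\lambda I+\mR_{k_1})^{1/2}$ with $\mK$ bounded skew-adjoint gives an explicit, uniformly $\lambda^{-1}$-bounded resolvent, and closedness glues the blocks. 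This is longer but more constructive: it gives the resolvent formula and anticipates exactly the modal structure used later in \S\ref{ssec:nonconst-drift}; the paper's route is shorter and does not need the block decomposition at this stage. One small domain point worth making explicit in your factorization: to see that $Sf:=(\lambda I+\mR_{k_1})^{-1/2}(I-\mK)^{-1}(\lambda I+\mR_{k_1})^{-1/2}f$ lands in $\cD(\mR_{k_1})$, use $(I-\mK)^{-1}=I+\mK(I-\mK)^{-1}$, so $(I-\mK)^{-1}$ preserves $\cD\big((\lambda I+\mR_{k_1})^{1/2}\big)$, after which $(\lambda I+\mR_{k_1})^{-1/2}$ maps that domain into $\cD(\mR_{k_1})$; this closes the argument. (As an aside: the paper's displayed formula for $-\mC^*$ has the pressure term with a minus sign, which appears to be a typo — tracking the Leray projection gives $+\nabla p[u]$ — but this does not affect the dissipativity of $-\mC^*$ since $\ip{\nabla p[u]}{u}=0$ on divergence-free fields either way.)
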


\begin{proof}
(1) A similar computation as in \eqref{C-coercive} shows $\langle u,-\mC u\rangle_{\wcH} \le 0\;\mbox{ for all}\, u \in\cD(\mC)$, i.e.\ the dissipativity of~$-\mC$. 

For the density of $\cD(\mC)$ in $\wcH$ we first consider
\[
\widetilde\cD := \{u\in \wcH\, | \,\partial_{x_1}u,\,\partial_{x_2}^2u \in (L^2(\T^2))^2\} \subset\cD(\mC).
\]
Truncating the Fourier representation of any $u\in\widetilde\cD$ (in analogy to \eqref{D(C)-fourier}) one easily finds that $\widetilde\cD$ is $L^2$-dense in $\wcH$, and hence also $\cD(\mC)$ in $\wcH$.

Using Theorem 1.4.5(c) of \cite{Pazy} we find that $-\mC$ is closable. Moreover, $-\overline\mC=-\mC$, since $\cD(\mC)$ was chosen as the maximal domain. 

(2) For the second statement we easily compute 
$$
  -\mC^* u 
 =\sin(x_2)\ \partial_{x_1} u -\nabla p[u] +\nu \partial_{x_2}^2 u,  
$$
which is defined at least on $\widetilde\cD$ and also dissipative. Hence, Corollary 1.4.4 (to the Lumer-Phillips Theorem) from \cite{Pazy} implies that $-\mC$ is the infinitesimal generator of a $C_0$-semigroup of contractions on~$\wcH$.
\end{proof}

Next we shall illustrate the short-time behavior of $\|e^{-\mC t}\|$ (in the spirit of~\eqref{short-t-decay}) by analyzing the Taylor expansion of the norm for a single trajectory at $t=0$. Using the same initial condition as in the proof of Lemma \ref{lem:Oseen:sin}, i.e.\ $u(0)=[0,\sin(x_1)]^\top$ which is a linear combination of the modes $\binom{\pm1}{0}$~, we compute:
\begin{align*}
    \mC u(0)&=-\frac12 \begin{bmatrix} \sin(x_1)\cos(x_2) \\ -\cos(x_1)\sin(x_2) \end{bmatrix}
    \in \cD(\mC),\\
    \mC^2 u(0)&=-\frac12 \begin{bmatrix} \frac35 \cos(x_1)\sin(2x_2)+\nu\sin(x_1)\cos(x_2) \\ \sin(x_1)\big(\frac12-\frac3{10}\cos(2x_2)\big)-\nu\cos(x_1)\sin(x_2) \end{bmatrix}
    \in \cD(\mC),\\
    \mC^3 u(0)&= \begin{bmatrix} \sin(x_1) \big(\frac{19}{80}\cos(x_2)-\frac{63}{400}\cos(3x_2)\big) \\ \cos(x_1)\big(\frac{21}{400}\sin(3x_2)-\frac{19}{80}\sin(x_2)  \big) \end{bmatrix}\\
    &\qquad +\nu  \begin{bmatrix} -\frac32 \cos(x_1)\sin(2x_2) \\ \sin(x_1)\big(-\frac14+\frac34\cos(2x_2)\big) \end{bmatrix}
    +\frac{\nu^2}2 \begin{bmatrix} -\sin(x_1)\cos(x_2) \\ \cos(x_1)\sin(x_2) \end{bmatrix},
\end{align*}
where we used the following solutions to the corresponding Poisson equation~\eqref{model:divOseen}:
\begin{align*}
    p[\mC u(0)]&=-\frac1{20} \sin(x_1)\sin(2x_2),\\
    p[\mC^2 u(0)]&=-\frac1{80} \cos(x_1)\big(7\cos(x_2)-\frac35\cos(3x_2)\big)-\frac\nu{20} \sin(x_1)\sin(2x_2).
\end{align*}
Hence we obtain
\begin{align*}
    \|u(0)\|^2_{\wcH} &=2\pi^2,\\
    \ddt \|u(t)\|^2_{\wcH}\,\big|_{t=0} &= -2\langle u(0),\mC u(0)\rangle_{\wcH}=0,\\
    \ddtddt \|u(t)\|^2_{\wcH}\,\big|_{t=0} &= 2\langle \mC u(0),\mC u(0)\rangle_{\wcH} +2\langle u(0),\mC^2 u(0)\rangle_{\wcH}=0,\\
    \ddtddtddt \|u(t)\|^2_{\wcH}\,\big|_{t=0} &= -6\langle \mC u(0),\mC^2 u(0)\rangle_{\wcH} -2\langle u(0),\mC^3 u(0)\rangle_{\wcH}=-2\nu\pi^2<0.
\end{align*}
This implies the following lower bound on the propagator norm:
\begin{equation}\label{prop-norm-est}
 \|e^{-\mC t}\| := \sup_{\|v\|_{\wcH}=1} \|e^{-\mC t}v\|_{\wcH}\ge1 -\frac1{12}\nu\,t^3+\mathcal O(t^4) \quad \mbox{for } t\to 0^+.
 \end{equation}
The first non-constant term in the Taylor expansion of $\|u(t)\|^2_{\wcH}$ has exponent~$a=3$.
In Proposition \ref{prop:Oseen:y2} below, we show that the modal expansion of~\eqref{model:Oseen:anisotropic_2} with~$b_1(x_2)=\sin (x_2)$,
has hypocoercivity index $m_{HC}=1$. 
Although the analog of the result \eqref{short-t-decay} has not yet been established for unbounded generators~$-\mC$, we note that the exponent $a=3$ in the Taylor series of~$\|u(t)\|^2_{\wcH}$ satisfies again $a=2\mHC+1$.\\ 

\noindent
{\bf Modal decomposition:}
To analyze the hypocoercivity of~\eqref{model:Oseen:anisotropic_2} with $b_1(x_2) =\sin (x_2)$, we perform a modal decomposition of this model as in~\S\ref{ssec:Stokes}.
But due to the non-constant coefficient $b_1(x_2)$, these modes decouple now only w.r.t.~$k_1$, but not w.r.t.~$k_2$.
As the analog of~\eqref{eq:Stokes:Fourier} we obtain for $k\in\Z^2$ the DAE system
\begin{equation}\label{eq:Oseen:Fourier}
\begin{cases}
\ddt \phi_k 
&= \tfrac{k_1}{2}(\phi_{k+e_2} -\phi_{k-e_2}) -i k p_k -\nu k_2^2 \phi_k \ , 
\quad 
t>0 \ , \\
0& =-i k\cdot \phi_k  \ ,
\end{cases}
\end{equation}
with $e_2:=[0,\, 1]^\top$.
All modes with $k_1=0$ are fully decoupled, showing that $\phi_0(t)=\const$ and one has exponential decay with rate at least~$\mu=\nu$ for all modes with $k=(0,k_2)\ne 0$. 
More precisely, proceeding as for \eqref{eq:Stokes:Fourier} with $b=0$ one obtains: $\phi_{(0,k_2),2}(t)=0,\,p_{(0,k_2)}(t)=0$ for $k_2\ne0$, as well as
\begin{equation}\label{k10-decay}
  |\phi_{(0,k_2),1}(t)| \le |\phi_{(0,k_2),1}(0)|\,e^{-t\nu k_2^2}\,,
  \quad k_2\ne0\,.
\end{equation}

But for $k_1\ne 0$ the modes are only semi-decoupled, i.e., for each fixed $k_1\ne 0$, \eqref{eq:Oseen:Fourier} forms an infinite dimensional DAE system with $k_2\in\Z$.
Using the Fourier decomposition it is
notationally simpler to consider  $\ell^2(\Z)$ rather than $\ell^2(\N)$.
Then the doubly infinite complex vector 
\begin{align*}
w_{k_1} 
&:= 
\big\{[\phi_{(k_1,k_2),1}, \phi_{(k_1,k_2),2}, p_{(k_1,k_2)}]^\top \big\}_{k_2\in\Z} \\
& = [...\,|\, \phi_{(k_1,-1),1}, \phi_{(k_1,-1),2}, p_{(k_1,-1)}\,|\,\phi_{(k_10),1}, \phi_{(k_1,0),2}, p_{(k_1,0)}\,|\,\phi_{(k_1,1),1}, \phi_{(k_1,1),2}, p_{(k_1,1)}\,|\,...]^\top \\
&\in\ell^2(\Z;\C^3) 
\end{align*}
satisfies the DAE
\begin{equation} \label{DAE:Oseen}
\mE \dot{w}_{k_1}(t)
=
(\mJ_{k_1} -\mR) w_{k_1}(t) ,
\qquad
t\geq 0 ,
\end{equation}
with doubly infinite diagonal matrices $\mE, \mR\in\cL(\ell^2(\Z;\C^3))$ given as
\begin{align}
\mE
&:= \diag(\ \diag([1,1,0]);\, k_2\in\Z) \nonumber 
\\ 
&\qquad = \diag(\ldots\,|\, 1,1,0\,|\, 1,1,0\,|\, 1,1,0\,|\, \ldots)\ , \label{DAE:Oseen:E}
\\
\mR
&:= \nu \diag(\ \diag([k_2^2,k_2^2,0]);\, k_2\in\Z) \nonumber 
\\ 
&\qquad = \nu \diag(\ldots\,|\, 4,4,0\,|\, 1,1,0\,|\, 0,0,0\,|\, 1,1,0\,|\, 4,4,0\,|\, \ldots)\ , 
\label{DAE:Oseen:R}
\end{align}
and the doubly infinite block-tridiagonal matrix $\mJ_{k_1}\in\cL(\ell^2(\Z;\C^3))$ with $3\times3$-blocks given as 
\begin{equation}\label{DAE:Oseen:J}
\begin{split}
&\mJ_{k_1}
:= 
\\
&\left[ \begin{array}{ccc|ccc|ccc|ccc|ccc}
 & \ddots & & & & & & & & & & & & & \\
\hline
-\tfrac{k_1}2 & & & 0 & 0 & -i k_1 & \tfrac{k_1}2 & & & & & & & & \\
 & -\tfrac{k_1}2 & & 0 & 0 & i & & \tfrac{k_1}2 & & & & & & & \\
 & & 0 & -i k_1 & i & 0 & & & 0 & & & & & & \\
\hline
 & & & -\tfrac{k_1}2 & & & 0 & 0 & -i k_1 & \tfrac{k_1}2 & & & & & \\
 & & & & -\tfrac{k_1}2 & & 0 & 0 & 0 & & \tfrac{k_1}2 & & & & \\
 & & & & & 0 & -i k_1 & 0 & 0 & & & 0 & & & \\
\hline
 & & & & & & -\tfrac{k_1}2 & & & 0 & 0 & -i k_1 & \tfrac{k_1}2 & & \\
 & & & & & & & -\tfrac{k_1}2 & & 0 & 0 & -i & & \tfrac{k_1}2 & \\
 & & & & & & & & 0 & -i k_1 & -i & 0 & & & 0 \\
\hline
 & & & & & & & & & & & & & \ddots & 
\end{array}\right] 
{\scriptsize
\begin{array}{l}
 k_2=-1 \\[3em] k_2=0 \\[3em] k_2=1
\end{array} } \\
& {\scriptstyle\hspace{2em} \widetilde k_2=-2 \hspace{4em} \widetilde k_2=-1 \hspace{5em} \widetilde k_2=0 \hspace{5em} \widetilde k_2=1 \hspace{4em} \widetilde k_2=2 }
\end{split},
\end{equation}
where missing elements 
are understood to be zero.
Here we label the $3\times 3$ submatrices $\mJ_{k_1}(k_2,\widetilde k_2)$ by the corresponding modal numbers of $k_2$.
In particular, the diagonal blocks have the general form
\begin{equation}\label{DAE:Oseen:Jkk}
\mJ_{k_1} (k_2,k_2)
=
\begin{bmatrix}
 0 & 0 & -i k_1 \\
 0 & 0 & -i k_2 \\
-i k_1 & -i k_2 & 0 
\end{bmatrix} .
\end{equation}
\medskip

\noindent
{\bf Staircase transformation:}
To bring~\eqref{DAE:Oseen} into staircase form
we have to modify Step~1 in~\cite[Algorithm 5]{AAM21}, see e.g.~Appendix~\ref{app:DAEs:hypocoercivity}, and hence also the presentation of the result in Lemma~\ref{lem:tSF}:

On the one hand the matrix~$\mE$ is already diagonal, but on the other hand it has both an infinite dimensional kernel and range.
Hence, it would be impractical to separate the corresponding eigenvalues.
Instead we shall leave them interlaced as in~\eqref{DAE:Oseen:E}.
This has the following (purely notational) consequence on partitioning the matrices $\mJ_{k_1}$, $k_1\ne 0$. 
With the notation from~\cite[Algorithm 5: Step 1.2]{AAM21}, see also Appendix~\ref{app:DAEs:hypocoercivity}, we  partition every $3\times 3$ submatrix of $\mJ$ into blocks of size~2 and~1.
In \cite[Algorithm 5: Step 1.2]{AAM21} the finite dimensional matrix $\mJ$ is partitioned as  $\begin{bmatrix} \widetilde\mJ_{1,1} & -\widetilde\mJ_{2,1}^H \\ \widetilde\mJ_{2,1} & \widetilde\mJ_{2,2}  \end{bmatrix}$.
Just for notational simplicity we suppressed here and in the sequel the index $k_1$ and write $\mJ:=\mJ_{k_1}$. Next, we decompose the space of sequences $\ell^2(\Z;\C^3)$ as
\begin{equation}
    \ell^2(\Z;\C^3) =\ell^2(\Z;\spn\{e_1,e_2\}) \oplus \ell^2(\Z;\spn\{e_3\})  ,
\end{equation}
where $e_1=[1,0,0]^\top$, $e_2=[0,1,0]^\top$, $e_3=[0,0,1]^\top$; and use that these subspaces are isomorphic to 
\begin{equation}
    \ell^2(\Z;\spn\{e_1,e_2\}) \simeq \ell^2(\Z;\C^2)\ , \qquad
    \ell^2(\Z;\spn\{e_3\}) \simeq \ell^2(\Z;\C)\ .
\end{equation}
In analogy to Algorithm~\ref{algorithm:staircase:E_J-R}:~Step~1.2, we decompose the operator~$\mJ\in\cL(\ell^2(\Z;\C^3))$ into the linear operators
\begin{align*}
    \tJ_{1,1}\in \cL\big(\ell^2(\Z;\C^2)\big) , \quad
    \tJ_{2,1}\in \cL\big(\ell^2(\Z;\C^2);\ell^2(\Z;\C)\big) , \quad
    \tJ_{2,2}\equiv 0\in \cL\big(\ell^2(\Z;\C)\big) , 
\end{align*}
represented by the doubly infinite block-tridiagonal matrices with $2\times2$-blocks
\begin{equation}
\begin{split}\label{DAE:Oseen:J11}
\tJ_{1,1}
&:= \frac{k_1}2
\left[ \begin{array}{c|cc|cc|cc|cc|cc|c}
\ddots & & \ddots & & \ddots & & & & & & & \\
\hline
& -1 & & 0 & 0 & 1 & & & & & & \\
& & -1 & 0 & 0 & & 1 & & & & & \\
\hline
& & & -1 & & 0 & 0 & 1 & & & & \\
& & & & -1 & 0 & 0 & & 1 & & & \\
\hline
& & & & & -1 & & 0 & 0 & 1 & & \\
& & & & & & -1 & 0 & 0 & & 1 & \\
\hline
& & & & & & & \ddots & & \ddots & & \ddots
\end{array}\right] 
{\scriptsize
\begin{array}{l}
 \\[0.5em] k_2=-1 \\[2em] k_2=0 \\[2em] k_2=1 \\[1em]
\end{array} } \ , \\
& {\scriptstyle\hspace{5.8em} \widetilde k_2=-2 \hspace{0.9em} \widetilde k_2=-1 \hspace{1em} \widetilde k_2=0 \hspace{1.3em} \widetilde k_2=1 \hspace{0.5em} \widetilde k_2=2 }
\end{split}
\end{equation}
and the doubly infinite block-tridiagonal matrices with $1\times2$-blocks
\begin{equation}
\begin{split}\label{DAE:Oseen:J21}
\tJ_{2,1}
:= \diag( [-i k_1\ -i k_2];\ k_2\in\Z) 
&=\! \left[ \begin{array}{c|cc|cc|cc|c}
\ddots & & & & & & & \\
 \hline
 & -i k_1 & i & & & & & \\
 \hline
 & & & -i k_1 & 0 & & & \\
 \hline
 & & & & & -i k_1 & -i & \\
 \hline
 & & & & & & & \ddots
\end{array}\right] 
{\scriptsize 
\begin{array}{l}
\\[-0.4em] k_2=-1 \\[0.6em] k_2=0 \\[0.6em] k_2=1 \\[0.5em]
\end{array} } . \\
& {\scriptstyle\hspace{4em} \widetilde k_2=-1 \hspace{1em} \widetilde k_2=0 \hspace{2em} \widetilde k_2=1 \hspace{2.5em} } 
\end{split}
\end{equation}

When partitioning $\tR$ like $\tJ$, we find from \eqref{DAE:Oseen:R} that $\tR_{2,2}=0$. Hence Step~2 of Algorithm~\ref{algorithm:staircase:E_J-R} is trivial.
Since the subblocks of $\tJ_{2,1}$ are identical to the $(2,1)$-subblock of~\eqref{DAE:Stokes:J}, with $k_1\ne 0$ fixed and $k_2\in\Z$, their singular value decompositions (SVDs) also coincide.
Thus, the unitary transformation matrix $\mP_{k_1,k_2}$ (pertaining to each $3\times 3$ submatrix~\eqref{DAE:Oseen:Jkk}) is given by~\eqref{ex:Stokes:Pk}.
Hence, the (global) unitary transformation matrix for the DAE systems~\eqref{DAE:Oseen} with $k_1\in\Zo$ is given by the block diagonal matrix
\begin{equation}\label{model:Oseen:Pk1}
\mP_{k_1}
=
\diag\Bigg(
\tfrac1{|k|}
 \begin{bmatrix}
  k_1 & k_2 & 0 \\
  -k_2 & k_1 & 0 \\
  0 & 0 & i|k|
 \end{bmatrix} ;\
k_2\in\Z \Bigg) \ ,
\end{equation}
which represents a bounded unitary operator on $\ell^2(\Z;\C^3)$. Also the leading $2\times2$-subblocks~\eqref{ex:Stokes:Pk} form a bounded unitary operator on $\ell^2(\Z;\C^2)$. For each fixed $k_1\ne0$ in~\eqref{DAE:Oseen}, this is an infinite dimensional analog of the staircase transformation in Lemma \ref{lem:tSF}. Hence it translates~\eqref{DAE:Oseen}, via $y_{k_1} :=\mP_{k_1} w_{k_1} \in\ell^2(\Z;\C^3)$, into the staircase form
\begin{equation}\label{DAE:Oseen:staircase}
\widecheck\mE \dot{y}_{k_1}(t)
=
(\widecheck{\mJ}_{k_1} -\widecheck\mR) y_{k_1}(t) , 
\qquad
t\geq 0 .
\end{equation}
Here, the matrices are given by $\widecheck\mE =\mP_{k_1} \mE\mP_{k_1}^H =\mE$, $\widecheck{\mR} =\mP_{k_1} \mR\mP_{k_1}^H =\mR$, and $\widecheck{\mJ}_{k_1}=\mP_{k_1}\mJ_{k_1}\mP_{k_1}^H$ is a block tridiagonal matrix with the blocks
\begin{align*}
\widecheck{\mJ}_{k_1}(k_2,k_2)
&=
\begin{bmatrix}
 0 & 0 & -|k| \\
 0 & 0 & 0 \\
 |k| & 0 & 0 
\end{bmatrix} , 
 \\
\widecheck{\mJ}_{k_1}(k_2,k_2\pm 1)
&=
\pm \frac{k_1}{2 |k|\ |k\pm e_2|}
\begin{bmatrix}
|k|^2 \pm k_2 & \mp k_1 & 0 \\
\pm k_1 & |k|^2\pm k_2 & 0 \\
0 & 0 & 0 
\end{bmatrix} .
\end{align*}
In analogy to Definition \ref{def:DAE:mHC}, the non-trivial dynamics of~\eqref{DAE:Oseen:staircase} is given by \eqref{underlyingODE:y} as the infinite ODE
\begin{equation}\label{ODE:Oseen:y2}
\mE_{2,2} \big(\dot{y}_{k_1}\big)_2
=
\big(\widehat{\mJ}_{k_1} -\widehat\mR\big)_{2,2} \big(y_{k_1}\big)_2 .
\end{equation}
Here, $\big(y_{k_1}\big)_2=\big\{\frac{1}{|k|}\big(-k_2\ukI+k_1\ukII\big)\big\}_{k_2\in\Z} \in\ell^2(\Z;\C)$ is the projection of $y_{k_1} \in\ell^2(\Z;\C^3)$, where only the second element of each 3-block of $y_{k_1}$ is kept. 
This second component is closely related to the modal representation of the vorticity $\rot u$, see the proof of Theorem \ref{th-u-decay} and Remark \ref{rem:isometry-rotu} below for details.
Analogously, the three matrices in \eqref{ODE:Oseen:y2} are obtained by keeping only the $(2,2)$-element of each submatrix, and they act on $\ell^2(\Z;\C)$. Hence, $\mE_{2,2}=\mI$. 
The infinite matrix
\begin{equation}\label{DAE:Oseen:staircase:R}
\widehat{\mR}_{2,2}
=\nu \diag(k_2^2;\ k_2\in\Z)
=\nu \diag(\ldots,\ 9,4,1,0,1,4,9,\ \ldots)
\end{equation}
is positive semi-definite but not coercive. 
Moreover it represents an unbounded self-adjoint operator on $\ell^2(\Z;\C)$ with dense domain 
\begin{equation} \label{domain:R_22}
  \mathcal D(\widehat{\mR}_{2,2})=\{z\in \ell^2(\Z;\C)\, |\, \sum_{j\in\Z} j^4\,|z(j)|^2<\infty\}\ . 
\end{equation}
Furthermore, 
\begin{multline} \label{VhRzz} 
  (\widehat{\mR}_{2,2})^{1/2}=\sqrt\nu \diag(|k_2|;\ k_2\in\Z)\ ,
\\
  \cD\big((\widehat{\mR}_{2,2})^{1/2}\big) =
  \{z\in \ell^2(\Z;\C)\, |\, \sum_{j\in\Z} j^2\,|z(j)|^2<\infty\}\ .
\end{multline}
The operator $(\widehat{\mJ}_{k_1})_{2,2} \in\cL(\ell^2(\Z;\C))$ can be represented as the skew-adjoint tridiagonal matrix
\begin{equation}\label{DAE:Oseen:staircase:J}
\begin{split}
&\big(\widehat{\mJ}_{k_1}\big)_{2,2}
\\
&=
{\scriptsize 
\tfrac{k_1}2 \!
\begin{bmatrix}
\ddots & \ddots & \ddots & & & &
\\
 & -\frac{k_1^2 +2}{\sqrt{k_1^2+1}\sqrt{k_1^2+4}} & 0 & \frac{k_1^2}{\sqrt{k_1^2+1}\ |k_1|} & & &
\\
 & & -\frac{k_1^2}{|k_1| \sqrt{k_1^2+1}} & 0 & \frac{k_1^2}{|k_1|\ \sqrt{k_1^2+1}} & &
\\
 & & & -\frac{k_1^2}{\sqrt{k_1^2+1}\ |k_1|} & 0 & \frac{k_1^2+2}{\sqrt{k_1^2+1}\ \sqrt{k_1^2+4}} &
\\ 
 & & & & \ddots & \ddots & \ddots
\end{bmatrix}
\begin{array}{l}
 \\ k_2=-1 \\[1em] k_2=0 \\[1em] k_2=1 \\[1em]
\end{array} }
\\
& {\scriptstyle\hspace{7em} \widetilde k_2=-2 \hspace{3em} \widetilde k_2=-1 \hspace{3.25em} \widetilde k_2=0 \hspace{3.5em} \widetilde k_2=1 \hspace{3.5em} \widetilde k_2=2}
\end{split}\! .
\end{equation}
Its off-diagonal terms have the general form
\[
\big(\widehat{\mJ}_{k_1}\big)_{2,2} (k_2,k_2\pm 1)
= 
\pm \frac{k_1}2 \frac{|k|^2 \pm k_2}{|k|\ |k\pm e_2|} ,
\]
hence, for each fixed $k_1\ne0$, $\big(\widehat{\mJ}_{k_1}\big)_{2,2}$ is a bounded operator on $\ell^2(\Z;\C)$. 
Moreover, one easily sees that 
\[
  \big(\widehat{\mJ}_{k_1}\big)_{2,2} \big(\mathcal D(\widehat{\mR}_{2,2})\big)\subset \mathcal D(\widehat{\mR}_{2,2}) \ ,\quad 
  \big(\widehat{\mJ}_{k_1}\big)_{2,2} \big(\cD\big((\widehat{\mR}_{2,2})^{1/2}\big)\big)\subset \cD\big((\widehat{\mR}_{2,2})^{1/2}\big) \ .
\]
Thus, the operators $\big(\widehat{\mJ}_{k_1}\big)_{2,2}$ and $\widehat{\mR}_{2,2}$ satisfy the assumptions~\ref{prop:R}, \ref{prop:R1/2} in the Definition \ref{def:unb-op:HC-index} of the HC-index.
Hence, for fixed $k_1\ne 0$, $(\widehat{\mJ}_{k_1}-\widehat{\mR})_{2,2}$ in~\eqref{ODE:Oseen:y2} generates an analytic semigroup of contractions on~$\ell^2(\Z;\C)$. Moreover, 
either of the three conditions from Lemma \ref{lem:Op-Equivalence} can be used to determine the HC-index (see Proposition \ref{prop:Oseen:y2} below).\\

%
%
Altogether, due to~\cite[Theorem III.2.10]{EnNa00}, the operator $(\widehat{\mJ}_{k_1}-\widehat{\mR})_{2,2}$ with domain~$\cD(\widehat{\mR}_{2,2})$ generates an analytic semigroup on~$\ell^2(\Z;\C)$. 
\medskip

\noindent
{\bf Exponential decay of the dynamical part:}
Using DAE concepts we separated the dynamical part from the algebraic constraint. The following proposition studies the hypocoercivity of the dynamical part. More precisely,  
with the above setup we can now establish the exponential decay of $\big(y_{k_1}\big)_2\,$, i.e., the modal decomposition of the vorticity $\rot u$ (in fact in $\dot H^{-1}_{per}(\T^2)$, see Remark \ref{rem:isometry-rotu} for details):
\begin{proposition}\label{prop:Oseen:y2}
Let $b_1=\sin (x_2)$ in~\eqref{model:Oseen:anisotropic_2}.
Then, for each $k_1\in\Zo$, the modal dynamics~\eqref{ODE:Oseen:y2} is hypocoercive in the sense of~\eqref{exp-decay} in~$\ell^2(\Z;\C)$.
Moreover, its HC-index $\mHC=1$.
\end{proposition}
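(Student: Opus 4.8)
The plan is to prove the two assertions of Proposition~\ref{prop:Oseen:y2} separately: that the HC-index equals~$1$, and that $-\mC$ with $\mC:=\widehat{\mR}_{2,2}-(\widehat{\mJ}_{k_1})_{2,2}$ generates a uniformly exponentially stable $C_0$-semigroup on all of $\ell^2(\Z;\C)$. Throughout write $\mR:=\widehat{\mR}_{2,2}$ and $\mJ:=(\widehat{\mJ}_{k_1})_{2,2}$, and let $e_j\in\ell^2(\Z;\C)$ denote the unit sequence supported at the index $k_2=j$. By the discussion preceding the proposition, $\mR$ and $\mJ$ satisfy Assumptions~\ref{prop:R} and~\ref{prop:R1/2}, the operator $-\mC$ generates an analytic semigroup of contractions on $\ell^2(\Z;\C)$, and Definition~\ref{def:unb-op:HC-index} together with Lemma~\ref{lem:Op-Equivalence} is applicable.

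For the HC-index I would use the surjectivity criterion~\ref{B:G*_surjective} of Lemma~\ref{lem:Op-Equivalence}. From \eqref{VhRzz}, $\mR^{1/2}=\sqrt\nu\,\diag(|k_2|;\,k_2\in\Z)$; since $1/|k_2|$ is bounded for $k_2\ne0$, one checks that $\range(\mR^{1/2})=\{z\in\ell^2(\Z;\C):z(0)=0\}$, a closed subspace of codimension one. In particular $\range(\mR^{1/2})\ne\ell^2(\Z;\C)$, so criterion~\ref{B:G*_surjective} fails for $m=0$ (equivalently, $\ker\mR^{1/2}=\spn\{e_0\}\ne\{0\}$), whence $m_{HC}\ge1$. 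For $m=1$ one notes $e_1\in\cD(\mR^{1/2})$ with $\mR^{1/2}e_1=\sqrt\nu\,e_1$, and, since $\mJ$ is tridiagonal with vanishing diagonal, \eqref{DAE:Oseen:staircase:J} gives $\mJ e_1=\tfrac{k_1|k_1|}{2\sqrt{k_1^2+1}}\,e_0+c\,e_2$ with $c\in\R$ and nonzero $e_0$-coefficient (here $k_1\ne0$ is used). Subtracting $c\,e_2\in\range(\mR^{1/2})$ shows $e_0\in\spn\bigl(\range(\mR^{1/2})\cup\range(\mJ\mR^{1/2})\bigr)$, hence this span equals $\{z:z(0)=0\}\oplus\spn\{e_0\}=\ell^2(\Z;\C)$. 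By Lemma~\ref{lem:Op-Equivalence} this gives $m_{HC}\le1$, so $m_{HC}=1$; in particular the equivalent condition~\ref{B:KRC'op} now holds with $m=1$, i.e.\ $\ker\mR^{1/2}\cap\ker(\mR^{1/2}\mJ)=\{0\}$.

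For the exponential decay I would argue spectrally, exploiting the special structure of this model. Since $\mR=\nu\,\diag(k_2^2;\,k_2\in\Z)$ has compact resolvent (its resolvent is diagonal with entries $(\nu k_2^2-\lambda)^{-1}\to0$) and $\mJ$ is bounded, $\mC$ also has compact resolvent; thus $\sigma(\mC)$ is discrete, consisting of eigenvalues of finite multiplicity with $|\lambda|\to\infty$, and $\sigma(\mC)\subset\{\Re\lambda\ge0\}$ because $-\mC$ generates a contraction semigroup. Next, $\mC$ has no spectrum on $i\R$: if $\mC y=i\omega y$ with $y\ne0$ and $\omega\in\R$, then $\|\mR^{1/2}y\|^2=\Re\langle\mC y,y\rangle=0$, so $\mR y=0$ and hence $\mJ y=-i\omega y$; consequently $\mR^{1/2}\mJ y=-i\omega\,\mR^{1/2}y=0$, which puts $y$ into $\ker\mR^{1/2}\cap\ker(\mR^{1/2}\mJ)=\{0\}$ by the kernel condition established above — a contradiction (in particular $\ker\mC=\{0\}$, so $(\ker\mC)^\perp=\ell^2(\Z;\C)$). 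Because $-\mC$ generates an analytic semigroup, $\sigma(\mC)$ is contained in a sector $\{\lambda\ne0:|\arg\lambda|\le\varphi\}$ with $\varphi<\pi/2$; together with the discreteness and the absence of spectrum on $i\R$ this forces $\mu_{k_1}:=\inf_{\lambda\in\sigma(\mC)}\Re\lambda>0$, since any sequence of eigenvalues with $\Re\lambda\to0$ would satisfy $|\lambda|\le\Re\lambda/\cos\varphi\to0$ and hence accumulate at $0$, contradicting discreteness. Finally, for analytic semigroups the growth bound coincides with the spectral bound of the generator (see, e.g., \cite{EnNa00}), so $\omega_0(-\mC)=-\mu_{k_1}<0$, which is precisely~\eqref{exp-decay} with $\widetilde\cH=\ell^2(\Z;\C)$ and any $\mu<\mu_{k_1}$.

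The step I expect to be the main obstacle is the exclusion of spectrum accumulating on the imaginary axis; it rests on the conjunction of three facts: the compactness of the resolvent of $\mR$ (a feature particular to the coupled, non-constant-drift model, in contrast to \S\ref{ssec:const-drift}), the analyticity of the semigroup, and the finiteness of the HC-index. An alternative, closer to the Lyapunov-functional programme announced in the introduction, would be to exhibit a strict Lyapunov functional $\langle y,\mP_\delta y\rangle$ with $\mP_\delta=\mI+\delta\,\mathcal B_{k_1}$ a small bounded self-adjoint perturbation of the identity adapted to the index-one structure, and to verify $\tfrac{d}{dt}\langle y(t),\mP_\delta y(t)\rangle\le-2\mu\,\langle y(t),\mP_\delta y(t)\rangle$ along the flow; there the delicate point is that $\mR$ is unbounded, so the naive first-order correction generates an uncontrolled term $\|\mR^{1/2}\mJ y\|^2$, and one must exploit that the commutator $[\mR,\mJ]$ is $\mR^{1/2}$-bounded, together with a density argument on $\cD(\mR)$.
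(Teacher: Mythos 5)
Your argument is correct, and for the HC-index it follows essentially the same route as the paper: both use criterion~\ref{B:G*_surjective} of Lemma~\ref{lem:Op-Equivalence}, the lower bound $\mHC\ge1$ coming from $\ker\hR_{2,2}^{1/2}=\spn\{e_0\}$, and the upper bound $\mHC\le1$ from the fact that $\hJkzz$ maps $e_{\pm1}$ onto something with nonzero $e_0$-component. Your observation that $\range\big(\hR_{2,2}^{1/2}\big)=\{z:z(0)=0\}$ is the \emph{closed} codimension-one subspace, so that one only has to hit $e_0$ with $\hJkzz\hR_{2,2}^{1/2}$ modulo that range, is a modest simplification of the paper's explicit construction of $v,w$ for an arbitrary $h\in\cH$.

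For the exponential decay your route is genuinely different, and it is sound. The paper uses the staircase/Lyapunov programme from~\cite{AAM21}: it sets up $\mX_{k_1}=\mI+\epsilon_{k_1}\mY$ with $\mY$ a fixed rank-3 perturbation (obtained from Algorithm~\ref{algorithm:1Pi}), and proves the Lyapunov matrix inequality~\eqref{DAE:Oseen:LMI} via Lemma~\ref{lemma:Oseen:LMI}; the benefit is a decay rate $\lambda_{1,\min}/4$ and a condition-number constant, \emph{both uniform in} $k_1\in\Zo$, which is precisely what Theorem~\ref{th-u-decay} requires to pass from modal to full-system decay. Your argument instead combines (i) compactness of the resolvent of $\mC=\hR_{2,2}-\hJkzz$ (which you correctly inherit from the compact resolvent of the diagonal $\hR_{2,2}$ plus the boundedness of $\hJkzz$), (ii) sectoriality of the spectrum from analyticity, (iii) the kernel condition $\ker(\hR_{2,2}^{1/2})\cap\ker(\hR_{2,2}^{1/2}\hJkzz)=\{0\}$ equivalent to $\mHC=1$ to exclude imaginary eigenvalues, and (iv) the equality of growth bound and spectral bound for analytic semigroups. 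This proves Proposition~\ref{prop:Oseen:y2} as stated, but it is qualitative: the rate $\mu_{k_1}$ you obtain is not explicit and is not shown to be uniformly bounded away from $0$ over $k_1\in\Zo$. So your proof is a clean and more abstract alternative for the proposition itself, at the cost of not being directly usable for the $\ell^2$-summation step in Theorem~\ref{th-u-decay}; the paper's Lyapunov route pays off precisely there. (Your closing comment about an ``uncontrolled $\|\mR^{1/2}\mJ y\|^2$ term'' in the Lyapunov ansatz is not actually an obstacle in the paper's version: the perturbation $\mY$ in~\eqref{model:Oseen:X} is \emph{finite-rank}, so $\mA_{k_1}^H\mY+\mY\mA_{k_1}$ is supported on a five-dimensional subspace, and off that subspace coercivity of $2\hR_{2,2}$ takes over.)
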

We remark that HC-index $\mHC=1$ was already illustrated on one example in the estimate \eqref{prop-norm-est}.
In the following, we will use the canonical unit vectors $(e_j)_{j\in\Z}$ as orthonormal basis for
~$\ell^2(\Z)$
, which are defined for $j,k\in\Z$ as
\begin{equation} \label{ONB:l2}
 e_j^k
 =
 \begin{cases}
  1 & \text{for } k=j, \\
  0 & \text{for } k\ne j.
 \end{cases}
\end{equation}
\begin{proof}

To determine the HC-index of~$\big(\hJ_{k_1} -\hR\big)_{2,2}$ in~\eqref{ODE:Oseen:y2}, we use Condition \ref{B:G*_surjective} in Lemma \ref{lem:Op-Equivalence}:
The HC-index of $\big(\hJ_{k_1} -\hR\big)_{2,2}$ cannot be zero, since the selfadjoint part $\hRzz$ has a nontrivial kernel $\ker(\hRzz) =\spn\{e_0\}$.

We prove that its HC-index is $1$ by verifying Condition~\ref{B:G*_surjective} with $m=1$ in Lemma~\ref{lem:Op-Equivalence}:
For any $h\in\cH$, we determine $v,w\in\cD\big(\VhRzz\big)$ such that 
\begin{equation} \label{h=Rv+JRw}
 h =\VhRzz v +\hJkzz\VhRzz w \ .
\end{equation}
To do this, we compute
\begin{align*}
\big(\widehat{\mJ}_{k_1}\big)_{2,2}\
\big(\widehat{\mR}_{2,2}\big)^{1/2} 
=
-\sqrt{\nu} \frac{k_1}2
&\begin{bmatrix}
\ddots & \ddots & -\frac{k_1^2+2}{|k|\ |k-e_2|} & & & &
\\
 & \ddots & 0 & 0 & & &
\\
 & 0 & \frac{k_1^2}{|k|\ |k+e_2|} & 0 & -\frac{k_1^2}{|k|\ |k-e_2|} & 0 &
\\
 & & & 0 & 0 & \ddots &
\\ 
 & & & & \frac{k_1^2+2}{|k|\ |k+e_2|} & \ddots & \ddots
\end{bmatrix}
\begin{array}{l}
 \\[-0.4em] {\scriptstyle k_2=-1} \\[0.4em] {\scriptstyle k_2=0} \\[0.4em] {\scriptstyle k_2=1} \\[1em]
\end{array} ,
\\ 
& {\scriptstyle\hspace{4.6em} 
\widetilde{k}_2=-1 \hspace{1em} \widetilde{k}_2=0 \hspace{1.3em} \widetilde{k}_2=1 }
\end{align*}
see also~\eqref{hJzz} below.
Since $\big(\VhRzz v\big)_0 =0$, we observe that
\begin{align*}
 h_0
&
=-\sqrt{\nu} \frac{k_1}2 \frac{k_1^2}{\sqrt{k_1^2+1}\ |k_1|} (w_{-1} -w_1) \ ,
\intertext{and choosing $w_{-1}=2w_1$ yields,}
 h_0
&=\underbracket{-\sqrt{\nu} \frac{k_1}2 \frac{k_1^2}{\sqrt{k_1^2+1}\ |k_1|}}_{=:\alpha} w_1 \ ,
\end{align*}
such that $w_1 =\alpha^{-1} h_0$.
In fact, defining $w$ as
\[
 w_j :=
\begin{cases}
 \alpha^{-1} h_0 &\text{if $j=1$}\ , \\
 2\alpha^{-1} h_0 &\text{if $j=-1$}\ , \\
 0 &\text{else}\ ,
\end{cases} 
\]
yields $w\in\cD\big(\VhRzz\big)$.
Then $h-\hJkzz\VhRzz w\in\range\VhRzz$ such that 
\[
 v
:= 
\Big(\VhRzz\Big)^+
\big(h-\hJkzz\VhRzz w\big)
\in\cD\big(\VhRzz\big) \ ,
\]
where $\Big(\VhRzz\Big)^+=\frac1{\sqrt\nu} \diag\big(...,\frac13,\frac12,1,0,1,\frac12,\frac13,...\big)$ denotes the Moore-Penrose inverse.
Thus, for all $h\in\cH$, there exist $v,w\in\cD\big(\VhRzz\big)$ such that~\eqref{h=Rv+JRw} holds. This proves Condition~\ref{B:G*_surjective} with $m=1$ in Lemma~\ref{lem:Op-Equivalence} and finishes the proof of the statement about the HC-index.

We remark that determining the hypocoercivity index of 
$-{\mA}_{k_1}:=(\hR-\hJ_{k_1})_{2,2}$ directly via the coercivity condition in Definition~\ref{def:unb-op:HC-index} would be much more tedious, since both the matrix $-{\mA}_{k_1}$ and the corresponding left-hand-side of \eqref{Op:J-R:unbdd} (with $m=1$) are 5-diagonal, see Appendix~\ref{app:QuantitativeEstimate}.

\bigskip
In order to derive a hypocoercivity estimate~\eqref{exp-decay} for the evolution equation \eqref{ODE:Oseen:y2} we  proceed similarly to~\S4.3 in~\cite{AAC16} (there for a linear transport-reaction equation in 1D) and construct a strict Lyapunov functional~$\|y\|_{\mX_{k_1}}^2 := \ip{y}{\mX_{k_1} y}_{\ell^2(\Z)}$ for each $k_1\in\Zo$.
In the following, we will use the canonical unit vectors $(e_j)_{j\in\Z}$ as orthonormal basis for
~$\ell^2(\Z)$
, which are defined for $j,k\in\Z$ as
\[
 e_j^k
 =
 \begin{cases}
  1 & \text{for } k=j, \\
  0 & \text{for } k\ne j.
 \end{cases}
\]
We construct (an ansatz for) a strict Lyapunov functional using~\cite[Algorithm 3]{AAM21}, see Appendix~\ref{app:algorithm:1Pi}, starting with $\Pi_0=\mI$, $\tA_0=-\big(\widehat{\mJ}_{k_1}\big)_{2,2}$, and $\tB_0=\widehat{\mR}_{2,2}$.
The orthogonal projection~$\tPi_1$ onto  $\ker(\tB_0\tB_0)$ is given as $\tPi_1=e_0 e_0^\top$, such that $\Pi_1:=\tPi_1 =e_0 e_0^\top$. 
Here, by abuse of notation, sequences (such as $e_0$) are interpreted as infinite column vectors and $e_0 e_0^\top$ is the outer product of two (infinite) vectors such that $e_0 e_0^\top \in\cB(\ell^2(\Z))$ with 
\[
 \big(e_0 e_0^\top\big)_{j,k\in\Z}
=\begin{cases}
  1 & \text{for } k=j=0\ , \\
  0 & \text{else } \ .
 \end{cases}
\]
Then,
\[
 \tA_1
:= \Pi_1\tA_0\Pi_1 
 = -e_0 \underbracket{e_0^\top \big(\widehat{\mJ}_{k_1}\big)_{2,2} e_0}_{=0} e_0^\top
\]
and
\begin{align*}
 \tB_1
&:=
 \Pi_1 \tA_0 (\mI-\Pi_1)
 =
 \tfrac{k_1}2
 \begin{bmatrix}
 \ddots & \ddots & \ddots & & & &
 \\
 & 0 & 0 & 0 & & &
 \\
 & & \frac{k_1^2}{|k_1| \sqrt{k_1^2+1}} & 0 & -\frac{k_1^2}{|k_1|\ \sqrt{k_1^2+1}} & &
 \\
 & & & 0 & 0 & 0 &
 \\ 
 & & & & \ddots & \ddots & \ddots
 \end{bmatrix}
{\scriptsize  
 \begin{array}{l}
 \\ k_2=-1 \\[0.5em] k_2=0 \\[0.5em] k_2=1 \\[1em]
 \end{array} } , 
\\
&=
 \tilde{\alpha}
 \begin{bmatrix}
 \ddots & \ddots & \ddots & & & &
 \\
 & 0 & 0 & 0 & & &
 \\
 & & 1 & 0 & -1 & &
 \\
 & & & 0 & 0 & 0 &
 \\ 
 & & & & \ddots & \ddots & \ddots
 \end{bmatrix}
 =
 \tilde{\alpha} e_0 \underbracket{[\ldots,0,1,0,-1,0,\ldots]}_{=(e_{-1} -e_1)^\top} ,
\end{align*}
where $\tilde{\alpha} =\tfrac{k_1}2 \frac{k_1^2}{|k_1| \sqrt{k_1^2+1}}$.
In the next iteration,
\begin{align*}
 \tB_1 \tB_1^H
&=
 \tilde{\alpha}^2 e_0 (e_{-1} -e_1)^\top \big(e_0 (e_{-1} -e_1)^\top\big)^H
\\
&=
 \tilde{\alpha}^2 e_0 (e_{-1} -e_1)^\top \; (e_{-1} -e_1) e_0^\top 
\\
&=
 2 \tilde{\alpha}^2 (e_0 e_0^\top) .
\end{align*}
Hence, the orthogonal projection~$\tPi_2$ onto $\ker \big(\tB_1\tB_1^H\big)$ is given as $\tPi_2 =\mI -e_0 e_0^\top =\mI -\Pi_1$.
Due to $\Pi_2 :=\tPi_2 \Pi_1 =0$, the iteration terminates after one iteration in agreement with $\mHC=1$.
Finally, the ansatz for the weight matrix $\mX_{k_1}$ is chosen as 
\begin{equation}\label{model:Oseen:X}
\mX_{k_1}
:= \Pi_0+\epsilon_1 (\tA_0\Pi_1 + \Pi_1 \tA_0^H) =
\mI +\underbracket{\epsilon_1 \frac{k_1^3}{2\sqrt{k_1^2+1}\ |k_1|}}_{=:\epsilon_{k_1}\in\R}
\underbracket{\left[ \begin{array}{c|ccc|c}
 & & & & \\
\hline
 & 0 &-1 & 0 & \\
 &-1 & 0 & 1 & \\
 & 0 & 1 & 0 & \\
\hline
 & & & & 
\end{array}\right]}_{=:\mY} \ ,
\end{equation}
for some $\epsilon_1>0$ to be determined. We remark that all blank elements in the matrix in \eqref{model:Oseen:X} are zero. 
In the finite-dimensional setting, for sufficiently small $\epsilon_1>0$, the squared weighted norm $\|\cdot\|_{\mX_{k_1}}^2$ yields a strict Lyapunov functional, see~\cite{AAC22b}.
In this infinite dimensional example, we shall verify this statement directly.
The infinite matrix~$\mX_{k_1}$ is positive definite for $|\epsilon_{k_1}|<1/{\sqrt2}$.
Moreover,
\begin{equation}\label{DAE:Oseen:X}
 (1-\sqrt{2} |\epsilon_{k_1}|)\mI
 \leq
 \mX_{k_1}
 \leq 
 (1+\sqrt{2} |\epsilon_{k_1}|)\mI .
\end{equation}

\medskip
Finally, the coefficients $\epsilon_{k_1},\, k_1\ne0$ 
should be chosen such that the Lyapunov matrix inequalities (LMIs) 
\begin{equation}\label{DAE:Oseen:LMI}
 \mA_{k_1}^H \mX_{k_1} +\mX_{k_1}\mA_{k_1} +2\mu_{k_1}\mX_{k_1}
 \leq
 0\,,\quad k_1\ne0
\end{equation}
hold with $\mu_{k_1}$ as large as possible.
Due to Lemma~\ref{lemma:Oseen:LMI} (below), there exists an $\alpha_{\min}>0$ such that for all $\alpha\in(0,\alpha_{\min})$, the matrices $\mX_{k_1}$ defined in~\eqref{model:Oseen:X} with $\epsilon_{k_1}:=\alpha/{k_1}$ satisfy the LMIs~\eqref{DAE:Oseen:LMI}, where the decay rates~$\mu_{k_1}\ge\lambda_{1,\min}/4>0$ are uniformly bounded from below for all $k_1\in\Zo$; the bound $\lambda_{1,\min}=\lambda_{1,\min}(\alpha)$ is defined in~\eqref{model:Oseen:lambda1}. 
Consequently, for $\alpha\in(0,\alpha_{\min})$, the squared weighted norms $\|(y_{k_1})_2\|_{\mX_{k_1}}^2,\, k_1\in\Zo$
are strict Lyapunov functionals for the dynamics of each $(y_{k_1})_2$. More precisely, 
\begin{multline}\label{model:Oseen:decay:y2}
 \|(y_{k_1})_2(t)\|_{\ell^2(\Z)}
 \leq
 \sqrt{\kappa(\mX_{k_1})} e^{-t \lambda_{1,\min}/4} \|(y_{k_1})_2(0)\|_{\ell^2(\Z)}
\\ 
 \leq
 \sqrt{\tfrac{1+\sqrt{2}\ \alpha}{1-\sqrt{2}\ \alpha}} e^{-t \lambda_{1,\min}/4} \|y_{k_1,2}(0)\|_{\ell^2(\Z)} \ ,\quad t \geq 0\,,
\end{multline}
where $\alpha<\alpha_{\min}\leq 1/\sqrt 2$ and $\kappa(\mX_{k_1}) =\|\mX_{k_1}\|_2\ \|\mX_{k_1}^{-1}\|_2 = \frac{1+\sqrt2 \alpha/|k_1|}{1-\sqrt2 \alpha/|k_1|}$ is the condition number of~$\mX_{k_1}$.
This finishes the proof (of the first statement) of Proposition~\ref{prop:Oseen:y2}.
\end{proof}
The following lemma was used in the above proof. 
There, we did not intend to find the optimal decay rate of each system in~\eqref{ODE:Oseen:y2} (and similarly of~\eqref{DAE:Oseen:staircase}). Hence, we shall (only) prove that there exists an $\epsilon_1>0$ such that $\mQ_{k_1} :=-(\mA_{k_1}^H \mX_{k_1} +\mX_{k_1}\mA_{k_1})$ is positive definite; afterwards we shall determine $\mu_{k_1}>0$ such that~\eqref{DAE:Oseen:LMI} holds.
\begin{lemma}\label{lemma:Oseen:LMI}
Consider $\mA_{k_1} :=(\widehat{\mJ}_{k_1})_{2,2} -\widehat{\mR}_{2,2}$ with the matrices defined in~\eqref{DAE:Oseen:staircase:R}--\eqref{DAE:Oseen:staircase:J}.
For any fixed $\nu>0$, there exists $\alpha_{\min}>0$ (defined in~\eqref{cond:Oseen:alpha}) such that for all $\alpha\in(0,\alpha_{\min})$, the matrices $\mX_{k_1}$ defined in~\eqref{model:Oseen:X} with $\epsilon_{k_1}:=\alpha/k_1$ satisfy the Lyapunov matrix inequality~\eqref{DAE:Oseen:LMI}, where $\mu_{k_1}>0$ is uniformly bounded from below by $\lambda_{1,\min}/4>0$ (defined in~\eqref{model:Oseen:lambda1}) for all $k_1\in\Zo$.
\end{lemma}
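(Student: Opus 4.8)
The plan is to reduce the infinite‑dimensional Lyapunov matrix inequality~\eqref{DAE:Oseen:LMI} to a positivity statement for one fixed‑size matrix, and to exploit the scaling $\epsilon_{k_1}=\alpha/k_1$ to keep all constants uniform in $k_1\in\Zo$. First I would compute $\mQ_{k_1}:=-(\mA_{k_1}^H\mX_{k_1}+\mX_{k_1}\mA_{k_1})$ explicitly: writing $\mA_{k_1}=\hJkzz-\hRzz$ with $\hJkzz$ skew‑adjoint and $\hRzz$ self‑adjoint and nonnegative, and $\mX_{k_1}=\mI+\epsilon_{k_1}\mY$ with $\mY=\mY^*$, a direct expansion gives
\[
 \mQ_{k_1}=2\hRzz-\epsilon_{k_1}(\mY\hJkzz-\hJkzz\mY)+\epsilon_{k_1}(\hRzz\mY+\mY\hRzz).
\]
Since $\mY$ couples only the modes $k_2=0$ and $k_2=\pm1$ while $\hRzz=\nu\diag(k_2^2;\,k_2\in\Z)$, one checks $\hRzz\mY+\mY\hRzz=\nu\mY$, so $\mQ_{k_1}=2\hRzz+\epsilon_{k_1}\big(\nu\mY-(\mY\hJkzz-\hJkzz\mY)\big)$.

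Next I would reduce to a finite block. Because $\mY$ is supported on the modes $k_2\in\{-1,0,1\}$ and $\hJkzz$ is tridiagonal, the perturbation $\nu\mY-(\mY\hJkzz-\hJkzz\mY)$ is supported on $k_2\in\{-2,\dots,2\}$; hence $\mQ_{k_1}$ is block‑diagonal, consisting of a fixed $5\times5$ block indexed by $k_2\in\{-2,\dots,2\}$ and the diagonal operator $2\hRzz$ on the remaining modes, whose entries $2\nu k_2^2\ge18\nu$ are harmless. (Passing to the symmetric/antisymmetric combinations $\tfrac1{\sqrt2}(e_{k_2}\pm e_{-k_2})$ splits the $5\times5$ block further into a $3\times3$ block and a $2\times2$ block equal to $2\hRzz\ge2\nu\mI$, but this refinement is optional.) The key point is that the entries of the finite block involve $\hJkzz$ only through the products $\epsilon_{k_1}\,\hJkzz(k_2,k_2\pm1)$, and although $\hJkzz(k_2,k_2\pm1)=O(|k_1|)$ as $|k_1|\to\infty$, the choice $\epsilon_{k_1}=\alpha/k_1$ makes each such product a bounded function of $k_1\in\Zo$ with values in an explicit interval $\subset(0,\alpha/2)$; likewise the $\nu\mY$‑contribution has entries of size $|\epsilon_{k_1}\nu|\le\alpha\nu$. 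In particular, on the mode $k_2=0$, where $\hRzz$ has its kernel, the block carries the strictly positive diagonal entry $4\epsilon_{k_1}\,\hJkzz(0,1)=2\alpha|k_1|/\sqrt{k_1^2+1}>0$, which is precisely the positivity injected by the staircase construction ($\tB_1=\Pi_1\tA_0(\mI-\Pi_1)$).

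Then I would establish positivity of the block and deduce the LMI. Applying Sylvester's criterion (or a Schur complement that splits off the $k_2=0$ mode) to the finite block, each leading principal minor is, by the previous step, a bounded function of $k_1$ that stays bounded below by a strictly positive constant once $\alpha$ is chosen small relative to $\nu$; this fixes $\alpha_{\min}$ as in~\eqref{cond:Oseen:alpha} and yields $\mQ_{k_1}\ge\lambda_{1,\min}\mI$ for all $k_1\in\Zo$ with $\lambda_{1,\min}=\lambda_{1,\min}(\alpha)>0$ as in~\eqref{model:Oseen:lambda1} (equivalently, the uniformity follows by compactness after adjoining the limit $|k_1|=\infty$, at which the block converges to a fixed positive definite matrix). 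Shrinking $\alpha_{\min}$ so that $\alpha_{\min}\le1/\sqrt2$, the two‑sided bound~\eqref{DAE:Oseen:X} together with $|\epsilon_{k_1}|=\alpha/|k_1|\le\alpha$ gives $\mX_{k_1}\le(1+\sqrt2\,\alpha)\mI\le2\mI$, and therefore, using $-\mQ_{k_1}\le-\lambda_{1,\min}\mI$,
\[
 \mA_{k_1}^H\mX_{k_1}+\mX_{k_1}\mA_{k_1}+2\mu_{k_1}\mX_{k_1}\le\big(2\mu_{k_1}(1+\sqrt2\,\alpha)-\lambda_{1,\min}\big)\mI\le0
\]
whenever $0<\mu_{k_1}\le\lambda_{1,\min}/(2(1+\sqrt2\,\alpha))$. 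Since $1+\sqrt2\,\alpha\le2$, the uniform choice $\mu_{k_1}:=\lambda_{1,\min}/4$ satisfies~\eqref{DAE:Oseen:LMI} for every $k_1\in\Zo$, which is the claim.

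The hard part will be the last step, i.e.\ arranging that the lower bound on $\mQ_{k_1}$ does not deteriorate as $|k_1|\to\infty$. This is exactly what the scaling $\epsilon_{k_1}=\alpha/k_1$ is designed for: it cancels the linear‑in‑$k_1$ growth of the off‑diagonal entries of $\hJkzz$, so that the finite block converges to a fixed positive definite limit matrix; tracking the $k_1$‑dependence of its principal minors is elementary but somewhat tedious, since the relevant block is five‑diagonal in the original basis (cf.\ Appendix~\ref{app:QuantitativeEstimate}), and this is where the explicit constants $\alpha_{\min}$ and $\lambda_{1,\min}$ get pinned down. Everything else is routine linear algebra.
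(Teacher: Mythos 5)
Your proposal follows essentially the same route as the paper's proof: expand $\mQ_{k_1}=-(\mA_{k_1}^H\mX_{k_1}+\mX_{k_1}\mA_{k_1})=2\hRzz$ plus a finite-rank perturbation supported on $k_2\in\{-2,\dots,2\}$, observe that outside this block $2\hRzz\ge 18\nu\,\mI$, note that the scaling $\epsilon_{k_1}=\alpha/k_1$ cancels the linear growth of the off-diagonal entries of $\hJkzz$ so the reduced $5\times5$ block has $k_1$-uniformly bounded entries, verify positive definiteness of that block by Sylvester's criterion, and finally use $\mX_{k_1}\le 2\mI$ to convert $\mQ_{k_1}\ge\lambda_{1,\min}\mI$ into the LMI with $\mu_{k_1}=\lambda_{1,\min}/4$. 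The one place you leave implicit is how the explicit, $k_1$-uniform lower bound $\lambda_{1,\min}(\alpha)$ of~\eqref{model:Oseen:lambda1} is extracted: your compactness argument (adjoining $|k_1|=\infty$) would only give existence of some uniform bound, whereas the paper pins it down quantitatively via the arithmetic–geometric mean inequality $\lambda_1\ge 256\,\det\mQ/(\tr\mQ)^4$ together with $\tr\mQ=20\nu$ and uniform bounds on the auxiliary functions $\beta_1,\beta_2$. Since the lemma statement refers to that specific formula, you would still need this or an equivalent computation; otherwise your reduction and conclusion are sound.
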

The technical proof of this lemma is deferred to Appendix~\ref{appendix_three}.\\

\noindent
{\bf Exponential decay of the full system:
}Combining the modal decay of Proposition \ref{prop:Oseen:y2} with the decay of the $(0,k_2)$-modes we shall obtain next the hypocoercivity estimate of the anisotropic Oseen equation \eqref{model:Oseen:anisotropic_2} in 
state domain.
\begin{align*}
  u(t)
&\in \cH := H_{per}(\diver 0, \T^2)\\
  u(t)-u_\infty 
&\in \wcH:=\{ u\in\cH\ |\ \int_{\T^2} u \dd[x]=0 \}\ ,  
\end{align*}
where the steady state $u_\infty:=\frac{1}{4\pi^2}\int_{\T^2} u(x,0)\dd[x]$ equals the constant-in-$t$ mode $\phi_0$. 
Moreover, both $\cH$ and  $\wcH$ are closed subspaces of $(L^2(\T^2))^2$. For the Fourier decomposition \eqref{Fourier} we have in $\wcH$
\begin{equation}\label{parseval}
  \|u(t)-u_\infty\|_{\wcH}^2
  = \|u(t)-u_\infty\|_{(L^2(\T^2))^2}^2
  =\frac{1}{4\pi^2} \sum_{k\ne0} |\phi_k(t)|^2\,.
\end{equation}
\begin{theorem}\label{th-u-decay}
Let $b_1=\sin (x_2)$ in~\eqref{model:Oseen:anisotropic_2} and $u(0)\in H_{per}(\diver 0, \T^2)$.
Then
\begin{multline}\label{model:Oseen:L2-decay}
 \|\nabla p(t)\|_{(L^2(\T^2))^2}\le \|u(t)-u_\infty\|_{(L^2(\T^2))^2}
\\ 
 \leq 
 \sqrt{\tfrac{1+\sqrt{2}\ \alpha}{1-\sqrt{2}\ \alpha}} e^{-t \min(\nu,\,\lambda_{1,\min}/4)} \|u(0)-u_\infty\|_{(L^2(\T^2))^2} \ ,\quad t \geq 0\,,
\end{multline}
where $\alpha$ and $\lambda_{1,\min}$ are as in \eqref{model:Oseen:decay:y2}.
\end{theorem}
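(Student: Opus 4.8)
The plan is to deduce the $L^2$ decay of $u(t)-u_\infty$ from the modal estimates already established, via Parseval's identity \eqref{parseval}, splitting the sum over $k\ne0$ into the two families $k_1=0$ and $k_1\ne0$. For the family $k_1=0$, recall from the discussion around \eqref{k10-decay} that the incompressibility constraint forces $\phi_{(0,k_2),2}(t)=0$ and $p_{(0,k_2)}(t)=0$ for $k_2\ne0$, while $|\phi_{(0,k_2),1}(t)|\le|\phi_{(0,k_2),1}(0)|\,e^{-t\nu k_2^2}$; since $k_2^2\ge1$ this yields $\sum_{k_2\ne0}|\phi_{(0,k_2)}(t)|^2\le e^{-2\nu t}\sum_{k_2\ne0}|\phi_{(0,k_2)}(0)|^2$.

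For each fixed $k_1\ne0$, the central step is the norm identity
\[
 \sum_{k_2\in\Z}|\phi_{(k_1,k_2)}(t)|^2 = \|(y_{k_1})_2(t)\|_{\ell^2(\Z)}^2\,.
\]
This follows from the unitarity of the block transformation $\mP_{k_1}$ in \eqref{model:Oseen:Pk1}: for every $k_2$ the $3\times3$ block is unitary, so $|\phi_{(k_1,k_2),1}|^2+|\phi_{(k_1,k_2),2}|^2+|p_{(k_1,k_2)}|^2=\sum_{j=1}^3|y_{(k_1,k_2),j}|^2$; but $y_{(k_1,k_2),1}=\tfrac1{|k|}\,k\cdot\phi_k=0$ by incompressibility and $y_{(k_1,k_2),3}=i\,p_{(k_1,k_2)}$, so the pressure contributions cancel on both sides and only $|\phi_{(k_1,k_2)}|^2=|(y_{k_1})_2(k_2)|^2$ survives (cf.\ Remark~\ref{rem:isometry-rotu}). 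The same identity at $t=0$ shows $\|(y_{k_1})_2(0)\|_{\ell^2(\Z)}^2=\sum_{k_2}|\phi_{(k_1,k_2)}(0)|^2$, which is finite and consistent because $u(0)\in H_{per}(\diver0,\T^2)$. Applying Proposition~\ref{prop:Oseen:y2} in the quantitative form \eqref{model:Oseen:decay:y2} then gives, for each $k_1\ne0$,
\[
 \sum_{k_2}|\phi_{(k_1,k_2)}(t)|^2 \le \tfrac{1+\sqrt2\,\alpha}{1-\sqrt2\,\alpha}\,e^{-t\lambda_{1,\min}/2}\sum_{k_2}|\phi_{(k_1,k_2)}(0)|^2\,.
\]

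Summing both families, using $\tfrac{1+\sqrt2\,\alpha}{1-\sqrt2\,\alpha}\ge1$ and $\min(\nu,\lambda_{1,\min}/4)\le\nu$, and invoking \eqref{parseval} twice, one obtains
\[
 \|u(t)-u_\infty\|_{(L^2(\T^2))^2}^2 = \tfrac1{4\pi^2}\sum_{k\ne0}|\phi_k(t)|^2 \le \tfrac{1+\sqrt2\,\alpha}{1-\sqrt2\,\alpha}\,e^{-2t\min(\nu,\lambda_{1,\min}/4)}\,\|u(0)-u_\infty\|_{(L^2(\T^2))^2}^2\,,
\]
and taking square roots gives the second inequality of \eqref{model:Oseen:L2-decay}. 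The first inequality follows from the elliptic estimate \eqref{p-u-estimate}: since the right-hand side of the Poisson problem \eqref{model:divOseen} annihilates constants, $p(t)=p[u(t)]=p[u(t)-u_\infty]$, whence $\|\nabla p(t)\|_{(L^2(\T^2))^2}=\|p(t)\|_{\dot H^1_{per}(\T^2)}\le\|u(t)-u_\infty\|_{(L^2(\T^2))^2}$.

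The one point that genuinely needs care is the norm identity $\sum_{k_2}|\phi_{(k_1,k_2)}(t)|^2=\|(y_{k_1})_2(t)\|_{\ell^2(\Z)}^2$, i.e.\ verifying that after the staircase transformation the velocity modal energy for a fixed $k_1$ collapses exactly onto the energy of the single dynamical component $(y_{k_1})_2$, with the pressure modes matching on both sides and the incompressibility-constrained component contributing nothing; everything else is bookkeeping. A secondary issue would be uniformity in $k_1$ of the rate in \eqref{model:Oseen:decay:y2}, but this is precisely the content of Lemma~\ref{lemma:Oseen:LMI} (the bound $\mu_{k_1}\ge\lambda_{1,\min}/4$ with $\lambda_{1,\min}$ independent of $k_1$), so no extra argument is required.
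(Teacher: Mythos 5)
Your proof is correct and follows essentially the same route as the paper: establish the modal isometry \eqref{y-norm}, then sum the decay estimates \eqref{k10-decay} (for $k_1=0$) and \eqref{model:Oseen:decay:y2} (for $k_1\ne0$) via Parseval \eqref{parseval}, and finish with the elliptic estimate \eqref{p-u-estimate} for $\nabla p$. The only (pleasant) deviation is in how you verify \eqref{y-norm}: you deduce it from the unitarity of the $3\times3$ blocks of $\mP_{k_1}$ together with $y_{(k_1,k_2),1}=0$ and $|y_{(k_1,k_2),3}|=|p_{(k_1,k_2)}|$, whereas the paper expands the square and uses $k\cdot\phi_k=0$ directly; your version is a little cleaner and avoids the explicit computation, and it also correctly dispenses with the bijectivity claim the paper establishes but does not actually need for this inequality.
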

\begin{proof}
We  first prove that
\begin{equation}\label{y-norm}
  \|\big(y_{k_1}\big)_2\|_{\ell^2(\Z)} = 
  \|\phi_{(k_1,\cdot)}\|_{\ell^2(\Z;\C^2)}\,,\quad \mbox{for all}\,k_1\ne0\,
\end{equation} 
for divergence-free flow fields $u\in\wcH$, and that 
\begin{equation}\label{y-form}
  \big(y_{k_1}\big)_2=\big\{\frac{1}{|k|}\big(-k_2\ukI+k_1\ukII\big)\big\}_{k_2\in\Z}\,,\quad k_1\ne0\,
\end{equation} 
is a bijection from the divergence-free subspace of $\ell^2(\Z;\C^2)$, i.e., for $ik\cdot \phi_k=0$, to $\ell^2(\Z)$; see also the remark on the leading $2\times2$-subblock of $\mP_{k_1}$ after \eqref{model:Oseen:Pk1}. 

To this end we first note that the right hand side of \eqref{y-form} is 
related to the vorticity of $u$, since the modal representation of $\rot u$ (in 2D) is $i(-k_2\ukI+k_1\ukII)$.

Given a divergence-free $\phi_{(k_1,\cdot)} \in \ell^2(\Z;\C^2)$ with $k_1\ne0$ we compute with \eqref{y-form}:
\begin{align*}
& \|\big(y_{k_1}\big)_2\|_{\ell^2(\Z)}^2 
\\
&= \sum_{k_2\in\Z} \frac{1}{|k|^2} \big(k_2^2|\ukI|^2 -k_1k_2 \overline{\ukI} \, \ukII -k_1k_2 \ukI \, \overline{\ukII}+k_1^2|\ukII|^2\big) 
\\
&= \sum_{k_2\in\Z} \frac{1}{|k|^2} \big(k_2^2|\ukI|^2 +k_1^2|\ukI|^2+k_2^2|\ukII|^2 +k_1^2|\ukII|^2\big) =
  \|\phi_{(k_1,\cdot)}\|_{\ell^2(\Z;\C^2)}^2\,,
\end{align*}
where we used $k\cdot \phi_k=0$ twice.

For the other direction, given a $\big(y_{k_1}\big)_2 \in \ell^2(\Z)$ with $k_1\ne0$ we define the divergence-free flow field
\[
  \ukI:=-\frac{k_2}{|k|} \big(y_{k_1}\big)_2\,,\quad 
  \ukII:=\frac{k_1}{|k|} \big(y_{k_1}\big)_2\,,\quad   
  k_2\in\Z\,,
\]
which is compatible with \eqref{y-form}. Then we have
\[
 \|\phi_{(k_1,\cdot)}\|_{\ell^2(\Z;\C^2)}^2
= \sum_{k_2\in\Z} \Big(\frac{k_2^2}{|k|^2} + \frac{k_1^2}{|k|^2} \Big)|\big(y_{k_1}\big)_2|^2 
= \|\big(y_{k_1}\big)_2\|_{\ell^2(\Z)}^2\,,
\]
and this proves the isometry and bijectivity.

Finally, we sum up the (square of the) modal inequalities \eqref{model:Oseen:decay:y2} for $k_1\ne0$ and the inequalities \eqref{k10-decay} for $k_1=0$ but $k_2\ne0$. This yields the claimed decay estimate \eqref{model:Oseen:L2-decay} for $u(t)-u_\infty$ with using \eqref{parseval}. The decay of $\|\nabla p(t)\|_{(L^2(\T^2))^2}$ then follows from the estimate \eqref{p-u-estimate}.
\end{proof}

\begin{remark}\label{rem:isometry-rotu}
The modal isometry \eqref{y-norm} can be extended to physical space by combining all modes $\big(y_{k_1}\big)_2$. 
Recalling from \eqref{y-form} the dependence of $y_2=y_2[u]$ on $u$, let $y_2:=\big\{\big(y_{k}\big)_2\big\}_{k\in\Z^2\setminus \{0\}}$. For scalar functions $f$ on $\T^2$ we define the following homogeneous Sobolev space via the Fourier decomposition of $f$:
\[
  f\in \dot H_{per}^{-1}(\T^2) \quad :\Leftrightarrow \quad \|f\|_{\dot H_{per}^{-1}(\T^2)}^2 :=
  \frac1{4\pi^2} \sum_{k\ne0} \frac1{|k|^2}|f_k|^2 <\infty\,.
\]

Then we have the following relation for divergence-free flow fields $u$ on $\T^2$ with vanishing average, i.e.\ $\int_{\T^2} u(x)\dd[x]=0$: 
The space $\dot H_{per}^{-1}(\T^2)$\; [for $\rot u=-\partial_{x_2}\uI+\partial_{x_1}\uII$] is isometrically isomorphic to 
\[\wcH =\{ u\in H_{per}(\diver \,0,\T^2)\ |\ \int_{\T^2} u \dd[x]=0 \}\;\]
[for $u$]. 
More precisely we have
\begin{multline*}
\frac1{4\pi^2} \|y_2[u]\|^2_{\ell^2(\Z^2\setminus\{0\})}
 = \frac1{4\pi^2} \sum_{k\ne0} |(y_k)_2|^2
 = \frac1{4\pi^2} \sum_{k\ne0} \Big|\frac1{|k|}\big(-k_2\ukI+k_1\ukII\big)\Big|^2 
\\
 = \|\rot u\|^2_{\dot H_{per}^{-1}(\T^2)}
 = \frac1{4\pi^2} \|\phi_k\|^2_{(\ell^2(\Z^2\setminus\{0\}))^2}
 = \|u\|^2_{(L^2(\T^2))^2}\,,
\end{multline*}
where we used \eqref{y-norm}. This relation shows that $\|u(t)\|_{(L^2(\T^2))^2}$ describes all combined modes of the dynamical part in \eqref{ODE:Oseen:y2} by summing over $k_1\in\Z$.
\end{remark}

\medskip
In Proposition \ref{prop:Oseen:y2} we established the exponential decay of the dynamical mode component $(y_{k_1})_2(t)$. 
Next we shall extend this result by including the enslaved components $(y_{k_1})_1\equiv0$ and $(y_{k_1})_3(t)=i\,p_{k_1}(t)$.
\begin{proposition}\label{prop:Oseen:y}
Let $b_1=\sin (x_2)$ in~\eqref{model:Oseen:anisotropic_2}.
Then, for each $k_1\in\Zo$, the modal dynamics~\eqref{DAE:Oseen:staircase} is hypocoercive in the sense of~\eqref{exp-decay} in~$\ell^2(\Z;\C^3)$.
\end{proposition}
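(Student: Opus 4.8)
The plan is to reduce the claim to Proposition~\ref{prop:Oseen:y2} by using the staircase structure of~\eqref{DAE:Oseen:staircase} to eliminate the two enslaved components $\big(y_{k_1}\big)_1$ and $\big(y_{k_1}\big)_3=i\,p_{k_1}$ in terms of the dynamical component $\big(y_{k_1}\big)_2$. Fix $k_1\in\Zo$. First I would read off the two algebraic relations directly from the staircase matrices: in $\widecheck{\mJ}_{k_1}-\widecheck\mR$ the third row of every diagonal $3\times3$ block equals $[\,|k|\ 0\ 0\,]$, all off-diagonal blocks have vanishing third row, and $\widecheck\mE$ annihilates the third coordinate, so the third equation of the $3$-block at $k_2$ collapses to $0=|k|\,\big(y_{k_1}\big)_1(k_2)$, i.e.\ $\big(y_{k_1}\big)_1\equiv0$. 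Inserting this into the first equation of the same block — whose left-hand side $\big(\dot y_{k_1}\big)_1$ then vanishes, so the equation becomes algebraic — expresses $\big(y_{k_1}\big)_3$ as a linear combination of the two neighbouring entries of $\big(y_{k_1}\big)_2$, with $(k_2,k_2\pm1)$-coefficient $-\tfrac{k_1^2}{2|k|^2\,|k\pm e_2|}$. This is precisely the modal form of the pressure Poisson equation~\eqref{model:divOseen}, consistent with $\big(y_{k_1}\big)_3=i\,p_{k_1}$.

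The next step is to note that the resulting slaving operator $T_{k_1}:\big(y_{k_1}\big)_2\mapsto\big(y_{k_1}\big)_3$ is bounded on $\ell^2(\Z;\C)$: using $|k|^2\ge k_1^2$ and $|k\pm e_2|\ge|k_1|\ge1$ for $k_1\ne0$, each entry has modulus at most $\tfrac12$, whence $\|T_{k_1}z\|_{\ell^2(\Z)}\le\|z\|_{\ell^2(\Z)}$ for all $z\in\ell^2(\Z)$. Substituting $\big(y_{k_1}\big)_1\equiv0$ and $\big(y_{k_1}\big)_3=T_{k_1}\big(y_{k_1}\big)_2$ into the second equation of each $3$-block reproduces exactly the ODE~\eqref{ODE:Oseen:y2} for $\big(y_{k_1}\big)_2$; this is the decoupling of the dynamical part from the algebraic constraint provided by the staircase-form theory of Appendix~\ref{app:DAEs:hypocoercivity} (cf.~\eqref{underlyingODE:y} and Definition~\ref{def:DAE:mHC}). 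Consequently the consistent initial data of~\eqref{DAE:Oseen:staircase} form the closed subspace $V_{k_1}:=\{y\in\ell^2(\Z;\C^3)\ :\ y_1=0,\ y_3=T_{k_1}y_2\}$, and since for $k_1\ne0$ the generator $(\widehat{\mJ}_{k_1}-\widehat{\mR})_{2,2}$ has trivial kernel (Proposition~\ref{prop:Oseen:y2}), the semigroup on $V_{k_1}$ has no nontrivial steady state, so $V_{k_1}$ plays the role of $\widetilde\cH$ in~\eqref{exp-decay}.

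Finally I would conclude. The map $S_{k_1}:\ell^2(\Z;\C)\to V_{k_1}$, $z\mapsto(0,z,T_{k_1}z)$, is bounded with bounded inverse (the projection onto the second coordinate) and conjugates the flow of~\eqref{DAE:Oseen:staircase} on $V_{k_1}$ to that of~\eqref{ODE:Oseen:y2}. Hence, for consistent initial data, $\|y_{k_1}(t)\|_{\ell^2(\Z;\C^3)}^2=\|\big(y_{k_1}\big)_2(t)\|_{\ell^2(\Z)}^2+\|\big(y_{k_1}\big)_3(t)\|_{\ell^2(\Z)}^2\le 2\,\|\big(y_{k_1}\big)_2(t)\|_{\ell^2(\Z)}^2$, while $\|\big(y_{k_1}\big)_2(0)\|_{\ell^2(\Z)}\le\|y_{k_1}(0)\|_{\ell^2(\Z;\C^3)}$, so inserting the modal decay estimate~\eqref{model:Oseen:decay:y2} of Proposition~\ref{prop:Oseen:y2} yields $\|y_{k_1}(t)\|_{\ell^2(\Z;\C^3)}\le\sqrt{2\,\kappa(\mX_{k_1})}\;e^{-t\lambda_{1,\min}/4}\,\|y_{k_1}(0)\|_{\ell^2(\Z;\C^3)}$ for $t\ge0$, i.e.\ the hypocoercivity estimate~\eqref{exp-decay} with rate $\mu=\lambda_{1,\min}/4>0$. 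I expect the only delicate points to be (i) the boundedness of the slaving operator $T_{k_1}$, for which the lower bound $|k\pm e_2|\ge|k_1|$ (valid because $k_1\ne0$) is essential, and (ii) identifying the correct ambient space $\widetilde\cH=V_{k_1}$, since the constrained system~\eqref{DAE:Oseen:staircase} does not literally fall under Definition~\ref{def:hypoinfdim} and one must first pass to the dynamical subspace as in Appendix~\ref{app:DAEs:hypocoercivity}.
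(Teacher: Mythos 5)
Your proposal is correct and takes essentially the same route as the paper's proof: read off $(y_{k_1})_1\equiv0$ and the slaving relation for $(y_{k_1})_3$ directly from the staircase blocks (this is exactly the relation \eqref{y2-y3-relation}, i.e.\ $(y_{k_1})_3=-(\widecheck{\mJ}_{k_1})_{3,1}^{-1}(\widecheck{\mJ}_{k_1})_{2,1}(y_{k_1})_2$), observe that the slaving operator is bounded on $\ell^2(\Z;\C)$, and then transfer the decay estimate \eqref{model:Oseen:decay:y2} from Proposition~\ref{prop:Oseen:y2} to the full vector $y_{k_1}$. The only cosmetic difference is that you make the operator bound explicit ($\|T_{k_1}\|\le1$, hence multiplicative constant $\sqrt{2\kappa(\mX_{k_1})}$), whereas the paper leaves the constant written as $1+\bigl\|(\widecheck{\mJ}_{k_1})_{3,1}^{-1}(\widecheck{\mJ}_{k_1})_{2,1}\bigr\|^2$ without evaluating it.
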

\begin{proof}
Noting again that the third block in the staircase form~\eqref{DAE:Oseen:staircase} is void, it follows that 
\begin{align*}
 y_{k_1} 
&=
 \mP_{k_1} w_{k_1}
 =
 \big[ \mP_{(k_1,k_2)} [\phi_{(k_1,k_2),1}, \phi_{(k_1,k_2),2}, p_{(k_1,k_2)}]^\top ;\ k_2\in\Z \big]
\\ 
&=:
 \big[ [y_{(k_1,k_2),1}, y_{(k_1,k_2),2}, y_{(k_1,k_2),3}]^\top ;\ k_2\in\Z \big]
 \in \ell^2(\Z;\C^3).
\end{align*}
In our infinite dimensional model, we deduce (in analogy to the results of~\cite[Corollary 1]{AAM21}) that
\begin{equation}\label{DAE:Oseen:y}
\begin{split}
 \big(y_{k_1}\big)_1 &= 0 \ , 
\\
 \big(y_{k_1}\big)_2 &\text{ is governed by~\eqref{ODE:Oseen:y2}} \ , 
\\ 
 \big(y_{k_1}\big)_3 
 &=
 (\widecheck{\mJ}_{k_1})_{3,1}^{-H} (-(\widecheck{\mJ}_{k_1})_{2,1}^H -\mR_{2,1}^H) \big(y_{k_1}\big)_2 \ ,
\end{split}
\end{equation}
where $(\widecheck{\mJ}_{k_1})_{3,1} =\diag(|k|=\sqrt{k_1^2 +k_2^2}; \ k_2\in\Z)$, $\mR_{2,1}=0$, and $(\widecheck{\mJ}_{k_1})_{2,1}$ is a symmetric tridiagonal matrix whose diagonal elements are zero and the off-diagonal elements are $(\widecheck{\mJ}_{k_1})_{2,1}(k_2,k_2\pm 1) =k_1^2 /(2\ |k|\ |k\pm e_2|)$ with $e_2:=[0,\, 1]^\top$.
Then, for fixed $k_1\ne 0$, the third component of \eqref{DAE:Oseen:y} reads explicitly
\begin{equation}\label{y2-y3-relation}    
  \big(y_{k_1}\big)_3=
  \big(y_{k}\big)_3= -\frac{k_1^2}{2|k|^2}\left(\frac{\big(y_{k-e_2}\big)_2}{|k-e_2|}+\frac{\big(y_{k+e_2}\big)_2}{|k+e_2|}\right) , \quad k=[k_1,\,k_2]^\top,\quad k_2\in\Z.
\end{equation}
Since $(y_{k_1})_3$ is a linear combination of  $(y_{k_1})_2$ with $k_2$-uniformly bounded coefficients, solutions $y_{k_1}(t)$ for consistent initial data will converge to $0$ with uniform exponential rate $\lambda_{1,\min}/4>0$ for all $k_1\in\Zo$. 
Since the multiplying factors in the modal relation \eqref{y2-y3-relation} are $\mathcal O(1/|k|)$ and by using \eqref{y-norm}, \eqref{parseval}, we see that the decay of $p(t)$ is in a Sobolev space one level higher than the decay of $u(t)-u_\infty$. With different tools, this was already reflected in \eqref{model:Oseen:L2-decay}. 

A consistent initial value $y_{k_1}(0)$ again has to satisfy~\eqref{DAE:Oseen:y} such that 
\begin{equation}\label{DAE:Oseen:y0}
\begin{split}
 \big(y_{k_1}\big)_1(0) &= 0 \ , 
\\
 \big(y_{k_1}\big)_2(0) &\in\ell^2(\Z) \ , 
\\ 
 \big(y_{k_1}\big)_3(0) 
 &=
 (\widecheck{\mJ}_{k_1})_{3,1}^{-H} (-(\widecheck{\mJ}_{k_1})_{2,1}^H -\mR_{2,1}^H) \big(y_{k_1}\big)_2(0) \ .
\end{split}
\end{equation}
Then, the associated solution $y_{k_1}(t)$ of~\eqref{DAE:Oseen:staircase} satisfies
\begin{equation} \label{model:Oseen:decay:y}
\begin{split}
 \| y_{k_1}(t)\|^2
&=
 \| \big(y_{k_1}\big)_2(t)\|^2 +\| \big(y_{k_1}\big)_3(t)\|^2
\\
&=
 \| \big(y_{k_1}\big)_2(t)\|^2 +\| (\widecheck{\mJ}_{k_1})_{3,1}^{-1} (\widecheck{\mJ}_{k_1})_{2,1}\, \big(y_{k_1}\big)_2(t)\|^2 
\\
&\leq 
 \Big(1 +\big\|(\widecheck{\mJ}_{k_1})_{3,1}^{-1}  (\widecheck{\mJ}_{k_1})_{2,1}\big\|^2 \Big) \| \big(y_{k_1}\big)_2(t)\|^2 
\\
&\leq 
 \Big(1 +\big\|(\widecheck{\mJ}_{k_1})_{3,1}^{-1}  (\widecheck{\mJ}_{k_1})_{2,1}\big\|^2 \Big) {\tfrac{1+\sqrt{2}\ \alpha}{1-\sqrt{2}\ \alpha}} \| \big(y_{k_1}\big)_2(0)\|^2 e^{-\lambda_{1,\min} t/2}
\\
&\leq 
 \Big(1 +\big\|(\widecheck{\mJ}_{k_1})_{3,1}^{-1}  (\widecheck{\mJ}_{k_1})_{2,1}\big\|^2 \Big) {\tfrac{1+\sqrt{2}\ \alpha}{1-\sqrt{2}\ \alpha}} \| y_{k_1}(0)\|^2 e^{-\lambda_{1,\min} t/2} \ ,
\end{split}
\end{equation}
due to~\eqref{model:Oseen:decay:y2}, where $\alpha<\alpha_{\min}\leq 1/\sqrt 2$, $\lambda_{1,\min}=\lambda_{1,\min}(\alpha)$ is defined in~\eqref{model:Oseen:lambda1}, and $\|\mT\|$ denotes the operator norm of a bounded operator~$\mT\in\mathcal B(\ell(\Z))$.
This finishes the proof of Proposition~\ref{prop:Oseen:y}.
\end{proof}
As a final step we note that the (inverse) staircase transformation also implies exponential decay of the variable $\xk :=[\ukI,\ukII,p_k]^\top$ which is the modal representation of $[u,\, p]^\top$:
\begin{proposition}\label{prop:Oseen:w}
Let $b_1=\sin (x_2)$ in~\eqref{model:Oseen:anisotropic_2}.
Then, for each $k_1\in\Z$, the modal dynamics~\eqref{DAE:Oseen} is hypocoercive in the sense of~\eqref{exp-decay} in~$\ell^2(\Z;\C^3)$.
\end{proposition}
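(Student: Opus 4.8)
The plan is to obtain Proposition~\ref{prop:Oseen:w} as an essentially immediate consequence of Proposition~\ref{prop:Oseen:y} for $k_1\ne0$, by undoing the unitary staircase transformation $\mP_{k_1}$, and to dispatch the one remaining mode $k_1=0$ by a direct computation.

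For $k_1\ne0$ I would first recall from \eqref{model:Oseen:Pk1} and the ensuing remark that $\mP_{k_1}$ is a bounded \emph{unitary} operator on $\ell^2(\Z;\C^3)$ (acting blockwise via the $3\times3$ unitary matrices \eqref{ex:Stokes:Pk}), and that it transforms the DAE \eqref{DAE:Oseen} into the staircase form \eqref{DAE:Oseen:staircase} through $y_{k_1}=\mP_{k_1}w_{k_1}$. Because $\mP_{k_1}$ is unitary, $\|w_{k_1}(t)\|_{\ell^2(\Z;\C^3)}=\|y_{k_1}(t)\|_{\ell^2(\Z;\C^3)}$ for all $t\ge0$; moreover $\mP_{k_1}^H$ maps the set of consistent initial data of \eqref{DAE:Oseen:staircase} (described by \eqref{DAE:Oseen:y0}) bijectively onto the consistent initial data of \eqref{DAE:Oseen}, and likewise intertwines the kernels and the spaces $\widetilde{\cH}$ of the two DAEs. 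Substituting $y_{k_1}(t)=\mP_{k_1}w_{k_1}(t)$ into the decay estimate \eqref{model:Oseen:decay:y} of Proposition~\ref{prop:Oseen:y} and using this isometry then yields
\[
  \|w_{k_1}(t)\|_{\ell^2(\Z;\C^3)}^2 \le \Big(1+\big\|(\widecheck{\mJ}_{k_1})_{3,1}^{-1}(\widecheck{\mJ}_{k_1})_{2,1}\big\|^2\Big)\,\tfrac{1+\sqrt2\,\alpha}{1-\sqrt2\,\alpha}\; e^{-\lambda_{1,\min}t/2}\,\|w_{k_1}(0)\|_{\ell^2(\Z;\C^3)}^2\,,\quad t\ge0\,,
\]
which is exactly an estimate of the form \eqref{exp-decay}, with a decay rate $\lambda_{1,\min}/4$ that is uniform in $k_1\ne0$.

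For $k_1=0$ I would argue directly: the Fourier system \eqref{eq:Oseen:Fourier} then decouples completely over $k_2\in\Z$, the constraint forces $\phi_{(0,k_2),2}=0$ for all $k_2$, which upon insertion into the evolution equation for the second component gives $p_{(0,k_2)}=0$ for $k_2\ne0$, while $\phi_0=\const$ and $p_0\equiv0$ by normalization. Hence $\ker$ of \eqref{DAE:Oseen} at $k_1=0$ is the $k_2=0$ mode, and on its $\ell^2$-orthogonal complement $\widetilde{\cH}$ the decay \eqref{k10-decay} gives $\|w_0(t)\|_{\ell^2(\Z;\C^3)}\le e^{-\nu t}\|w_0(0)\|_{\ell^2(\Z;\C^3)}$; so \eqref{exp-decay} holds with $c=1$ and $\mu\ge\nu$ (the dynamics is in fact coercive there).

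I do not expect a genuine obstacle. The only things requiring a little care are (i) verifying that the blockwise unitary $\mP_{k_1}$ really does intertwine the kernels, the consistency manifolds and the spaces $\widetilde{\cH}$ of \eqref{DAE:Oseen} and \eqref{DAE:Oseen:staircase} — this is immediate from the block structure — and (ii), if one wants the hypocoercivity estimate uniform over all $k_1\in\Z$, noting that $\lambda_{1,\min}$ is $k_1$-independent, that $\nu\ge\lambda_{1,\min}/4$ may be arranged by shrinking $\alpha$, and that $\|(\widecheck{\mJ}_{k_1})_{3,1}^{-1}(\widecheck{\mJ}_{k_1})_{2,1}\|$ is bounded uniformly in $k_1$ since its entries are $\mathcal O(1/|k|)$ (cf.\ the discussion around \eqref{y2-y3-relation}).
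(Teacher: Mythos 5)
Your proposal is correct and follows essentially the same route as the paper's proof: for $k_1\ne0$ it transports the decay estimate \eqref{model:Oseen:decay:y} from Proposition~\ref{prop:Oseen:y} back through the unitary staircase transform $\mP_{k_1}$ using $\|w_{k_1}(t)\|=\|\mP_{k_1}^H y_{k_1}(t)\|=\|y_{k_1}(t)\|$, and for $k_1=0$ it uses the decoupling and \eqref{k10-decay}. The additional remarks you make about uniformity in $k_1$ and the boundedness of $\|(\widecheck{\mJ}_{k_1})_{3,1}^{-1}(\widecheck{\mJ}_{k_1})_{2,1}\|$ are consistent with the discussion surrounding \eqref{y2-y3-relation} in the paper.
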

\begin{proof}
For $k_1=0$, the (evolution equations of the) modes~$w_{k_1}$ are decoupled.
Following the analysis of the family of decoupled DAEs~\eqref{DAE:Stokes} (with $k_1=0$), and in particular \eqref{k10-decay} shows that $w_0$ converges to the infinite complex vector
\[
 w_{0}^\infty 
 := 
 [\ldots;\, 0, 0, 0;\, \phi_{(0,0),1}, \phi_{(0,0),2}, p_{(0,0)};\, 0, 0, 0;\, \ldots]^\top 
 \ ,
\]
(which corresponds to the constant equilibrium $(\phi_0,p_0)$) with the exponential decay rate $\nu$($=\min_{k_2\ne 0} (\nu k_2^2)$).

For $k_1\in\Zo$, consistent initial data~$w_{k_1}(0)$ of system~\eqref{DAE:Oseen} has the form $w_{k_1}(0)=\mP_{k_1}^H y_{k_1}(0)$ where $\mP_{k_1}$ is defined in~\eqref{model:Oseen:Pk1} and $y_{k_1}(0)$ is given as~\eqref{DAE:Oseen:y0}.
Then, the associated solution~$w_{k_1}(t)$ of system~\eqref{DAE:Oseen} satisfies
\begin{align*}
 \| w_{k_1}(t)\|^2 
&=
 \| \mP_{k_1}^H y_{k_1}(t)\|^2
 =
 \| y_{k_1}(t)\|^2 \\
&\leq 
 \Big(1 +\big\|(\widecheck{\mJ}_{k_1})_{3,1}^{-1}  (\widecheck{\mJ}_{k_1})_{2,1}\big\|^2 \Big)  {\tfrac{1+\sqrt{2}\ \alpha}{1-\sqrt{2}\ \alpha}} \| y_{k_1}(0)\|^2 e^{-\lambda_{1,\min} t/2} \\
&= 
 \Big(1 +\big\|(\widecheck{\mJ}_{k_1})_{3,1}^{-1}  (\widecheck{\mJ}_{k_1})_{2,1}\big\|^2 \Big)  {\tfrac{1+\sqrt{2}\ \alpha}{1-\sqrt{2}\ \alpha}} \| w_{k_1}(0)\|^2 e^{-\lambda_{1,\min} t/2} \ ,
\end{align*}
due to~\eqref{model:Oseen:decay:y}, where $\alpha<\alpha_{\min}\leq 1/\sqrt 2$, $\lambda_{1,\min}=\lambda_{1,\min}(\alpha)$ is defined in~\eqref{model:Oseen:lambda1}, and $\|\mT\|$ denotes the operator norm of a bounded operator~$\mT\in\mathcal B(\ell(\Z))$.

Altogether, for consistent initial data, solutions of system~\eqref{DAE:Oseen} (and resp.~\eqref{eq:Oseen:Fourier}) converge to the constant equilibrium with a uniform exponential rate.
\end{proof}


\section{Conclusions}\label{sec:conclusions}

After extending the notion of hypocoercivity index to evolution equations in (infinite dimensional) Hilbert spaces, we have performed the analysis of the long-time decay behavior of three variants of isotropic and anisotropic Oseen-type equations from fluid dynamics (for simplicity on a 2D torus). Due to the torus setting we used DAE theory in Fourier space to classify the hypocoercivity index. These equations are either coercive, hypocoercive with index 1, or even not hypocoercive (the latter showing exponential convergence only to a traveling wave solution).

\begin{appendices}
\section{Appendix}\label{sec:appendix}

\subsection{Hypocoercivity in linear semi-dissipative DAEs}
\label{app:DAEs:hypocoercivity}

Here, we recall the basic theory of hypocoercivity for finite-dimensional linear semi-dissipative Hamiltonian DAEs~\cite{AAM21}:

\begin{definition}[{\cite[Definition~4]{AAM21}}]\label{def:hypoDAE}
A matrix pencil $\lambda \mE-\mA$ is called~\emph{negative hypocoercive} if the pencil is regular, of DAE-index at most two and the finite eigenvalues of the pencil $\lambda \mE-\mA$ have negative real part.
\end{definition}
We note that a regular pencil might not have \emph{any} finite eigenvalues, in which case the last condition would be void.
Due to \cite[Theorem~3]{AAM21}, a linear semi-dissipative Hamiltonian DAE system~\eqref{DAE:EA} with a regular pencil~$\lambda \mE-\mA$ only has finite eigenvalues with non-positive real part. 

The definition of the hypocoercivity index for DAEs is based on a staircase form of DAEs, see \cite[Lemma 5]{AAM21}:
\begin{lemma}[Staircase form for triple $(\mE,\mJ,\mR)$] \label{lem:tSF}
Let $\mE,\mJ,\mR\in\Cnn$ satisfy $\mE=\mE^H\geq 0$, $\mR=\mR^H\geq 0$ and $\mJ=-\mJ^H$.
Then there exists a unitary matrix $\mP\in\Cnn$, such that $\widecheck \mE :=\mP\ \mE\ \mP^H$, $\widecheck \mJ :=\mP\ \mJ\ \mP^H$ and $\widecheck \mR :=\mP\ \mR\ \mP^H$ satisfy
\begin{equation}\label{staircase:EJR}
\widecheck \mE 
=:\begin{bmatrix}
\mE_{1,1} & \mE_{2,1}^H & 0 & 0 & 0 \\
\mE_{2,1} & \mE_{2,2} & 0 & 0 & 0 \\
0  & 0 & 0 & 0 & 0 \\
0  & 0 & 0 & 0 & 0 \\
0  & 0 & 0 & 0 & 0
\end{bmatrix}\! ,
\ 
\widecheck \mJ 
=:\begin{bmatrix}
\mJ_{1,1} & -\mJ_{2,1}^H & -\mJ_{3,1}^H & -\mJ_{4,1}^H & 0\\
\mJ_{2,1} & \mJ_{2,2} & -\mJ_{3,2}^H& 0 & 0\\
\mJ_{3,1} & \mJ_{3,2} & \mJ_{3,3}&  0 & 0\\
\mJ_{4,1} & 0& 0 & 0 &0\\
 0 & 0 & 0 & 0 & 0
\end{bmatrix}\! ,
\ 
\widecheck \mR 
=:\begin{bmatrix}
\mR_{1,1} & \mR_{2,1}^H & \mR_{3,1}^H& 0 & 0 \\
\mR_{2,1} & \mR_{2,2} & \mR_{3,2}^H &0 & 0  \\
\mR_{3,1} & \mR_{3,2} & \mR_{3,3} &0 & 0 \\
0  & 0 & 0 & 0 & 0 \\
0  & 0 & 0 & 0 & 0
\end{bmatrix}\! .
\end{equation}
These three matrices are partitioned in the same way, with (square) diagonal block matrices of sizes $n_1,n_2,n_3,n_4=n_1,n_5\in\N_0$.
If the block matrices $\mE_{1,1}$, $\mE_{2,2}$ (as well as $\mE_{2,1}$) are present, then the matrices $\mE_{1,1}$, $\mE_{2,2}$ (as well as $\begin{bmatrix} \mE_{1,1} & \mE_{2,1}^H \\ \mE_{2,1} & \mE_{2,2}  \end{bmatrix}$) are positive definite.
If the block matrices $\mJ_{4,1}$, $\mJ_{3,3} -\mR_{3,3}$ are present, then the matrices $\mJ_{4,1}$, $\mJ_{3,3}-\mR_{3,3}$ are invertible. 
\end{lemma}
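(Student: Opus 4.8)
I would prove this constructively, building $\mP$ as a finite product $\mP=\mP_\ell\cdots\mP_1$ of unitary matrices, each realizing one deflation step of a ``staircase algorithm'' (this is the route of \cite{AAM21}). The observation that makes the argument bearable is that congruence by a unitary, $\mM\mapsto\mP_j\mM\mP_j^H$, preserves Hermiticity, skew-Hermiticity and positive (semi-)definiteness; hence throughout the reduction the working triple still satisfies $\widecheck{\mE}=\widecheck{\mE}^H\ge0$, $\widecheck{\mR}=\widecheck{\mR}^H\ge0$, $\widecheck{\mJ}=-\widecheck{\mJ}^H$, and one only has to track the emergence of the zero pattern in \eqref{staircase:EJR} and the nonsingularity of the named blocks. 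A second ingredient, used repeatedly, is a consequence of $\mR\ge0$: if $\mR=\left[\begin{smallmatrix}\mR_a&\mR_b\\\mR_b^H&\mR_c\end{smallmatrix}\right]\ge0$ and $\mR_c v=0$, then also $\mR_b v=0$ (evaluate the quadratic form at $(-t\mR_b v,\,v)^H$ and let $t\downarrow0$); equivalently, in the relevant coordinate subspace $\ker\mR_c\subseteq\ker\mR$. Writing $\mA:=\mJ-\mR$ one has $\Re\langle v,\mA v\rangle=-\langle v,\mR v\rangle\le0$ on every subspace.

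\textbf{Step 1 (deflate $\ker\mE$).} By the spectral decomposition of $\mE=\mE^H\ge0$, pick unitary $\mP_1$ with $\mP_1\mE\mP_1^H=\diag(\widehat{\mE},0)$, $\widehat{\mE}>0$ of size $r_1=\rank\mE$. This splits the space as $(\ker\mE)^\perp\oplus\ker\mE$: the first summand carries the future blocks $1$ and $2$, the second the future blocks $3,4,5$. Since all subsequent rotations will act separately within each summand, $\mE$ stays a unitary conjugate of $\widehat{\mE}>0$ on $(\ker\mE)^\perp$, so the leading $2\times2$ superblock of $\widecheck{\mE}$ and its diagonal blocks $\mE_{1,1},\mE_{2,2}$ are automatically positive definite at the end. \textbf{Step 2 (split off the ``index-$1$'' block).} Work inside $\ker\mE$. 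Diagonalize the Hermitian positive semi-definite principal block $\mR_{\ker\mE}$ by a unitary acting only there; by the second ingredient, on its zero-eigenvalue subspace the entire block-row/column of $\mR$ vanishes, so that subspace lies in $\ker\mR$. A further finite sequence of unitary steps -- diagonalizing the skew-Hermitian restriction of $\mJ$ to the $\mR$-inert piece and absorbing its nonsingular part into block $3$ -- collects one subspace, block $3$ of size $n_3$, on which the Hermitian part of $\mJ_{3,3}-\mR_{3,3}$ equals $-\mR_{3,3}\le0$ and the kernel is trivial, hence $\mJ_{3,3}-\mR_{3,3}$ is invertible; the residue $\mathcal N_0\subseteq\ker\mE\cap\ker\mR$ is then $\mJ$-decoupled from everything except $(\ker\mE)^\perp$. (In Kronecker-structure terms: block $2$ is the genuine ODE part, block $3$ the index-$1$ algebraic part, blocks $1$ and $4$ the index-$2$ part, block $5$ the singular part.)

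\textbf{Step 3 (split blocks $4$ and $5$).} Let $\mG$ be the block of $\mJ$ coupling $\mathcal N_0$ into $(\ker\mE)^\perp$. An SVD-type rotation -- one unitary acting within $\mathcal N_0$, another within $(\ker\mE)^\perp$ -- brings $\mG$ to $\left[\begin{smallmatrix}\Gamma\\0\end{smallmatrix}\right]$ with $\Gamma$ square and invertible of size $n_1:=\rank\mG$. This simultaneously fixes block $1$ (size $n_1$), block $2$ (size $n_2=r_1-n_1$), block $4$ (size $n_4=n_1$, with $\mJ_{4,1}=\Gamma$ invertible) and block $5$ (size $n_5$); block $5$, lying in $\ker\mE\cap\ker\mR$ with its $\mJ$-coupling to $(\ker\mE)^\perp$ exhausted and to the rest already removed in Step~2, has a vanishing block-row/column in all three of $\widecheck{\mE},\widecheck{\mJ},\widecheck{\mR}$. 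Reading off the sizes $n_1,n_2,n_3,n_4=n_1,n_5$ and the induced zero pattern gives exactly \eqref{staircase:EJR}.

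\textbf{Main obstacle.} The delicate part is the bookkeeping in Step~2: the rotations cleaning up $\mJ$ on $\mathcal N_0$ must be ordered so as not to re-populate zeros created earlier in $\mR$ (and so that the $\mJ$-coupling between the newly formed block $3$ and $\mathcal N_0$ is indeed removed, matching $\mJ_{3,4}=0$ in \eqref{staircase:EJR}) -- achieved by letting each rotation act only within a subspace already decoupled by the preceding ones. Tied to this is the invertibility of the final $\mJ_{3,3}-\mR_{3,3}$: any $v\in\ker(\mJ_{3,3}-\mR_{3,3})$ obeys $0=\Re\langle v,(\mJ_{3,3}-\mR_{3,3})v\rangle=-\langle v,\mR_{3,3}v\rangle$, so $\mR_{3,3}v=0$ and then $\mJ_{3,3}v=0$, i.e.\ $v$ is inert and should have been pushed into $\mathcal N_0$; hence the reduction must be iterated until block $3$ is ``clean'', which terminates because each pass strictly decreases $\dim\mathcal N_0$. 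That no third deflation level is ever needed -- equivalently, that the remaining constraint is always one-step resolvable -- is where the semi-dissipative Hamiltonian structure enters (such regular pencils have index at most two). Finally $n_4=n_1$ is just ``$\dim(\mathrm{image})=\dim(\mathrm{co\text{-}image})$'' for $\mG$.
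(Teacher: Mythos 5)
Your outline follows the same constructive route as the paper: the lemma is proved by the staircase algorithm of \cite{AAM21}, reproduced here as Algorithm~\ref{algorithm:staircase:E_J-R}, and your Steps 1 and 3 match Steps 1 and 3 of that algorithm exactly (spectral split of $\mE$, then SVD of the $\mJ$-coupling between $\mathcal N_0$ and $(\ker\mE)^\perp$). Your preliminary observations — unitary congruence preserves the structure, and $\mR\ge0$ forces the off-diagonal block $\mR_b v=0$ whenever $\mR_c v=0$ — are precisely what makes the algorithm work.

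Your Step 2, however, diverges from the paper in a way that introduces a genuine gap. The paper applies a single lemma (\cite[Lemma~2]{AAM21}) to the compressed semi-dissipative matrix $\tJ_{2,2}-\tR_{2,2}$ on $\ker\mE$ as a whole, splitting $\ker\mE$ into $\range(\tJ_{2,2}-\tR_{2,2})$ (block 3) and $\ker(\tJ_{2,2}-\tR_{2,2})$ (blocks 4+5); since $\tJ_{2,2}-\tR_{2,2}$ is semi-dissipative, $\ker(\tJ_{2,2}-\tR_{2,2})=\ker\tJ_{2,2}\cap\ker\tR_{2,2}$, the kernel equals the cokernel, and the required zero pattern — in particular $\mJ_{3,4}=0$, $\mJ_{4,4}=0$ — drops out automatically from this characterization. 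You instead first diagonalize $\tR_{2,2}$ and then "diagonalize the skew-Hermitian restriction of $\mJ$ to the $\mR$-inert piece" $\mathcal K:=\ker\mE\cap\ker\mR$. The kernel of the \emph{compression of $\mJ$ to $\mathcal K$} is in general strictly larger than $\ker\tJ_{2,2}\cap\mathcal K$ (the former only asks $\mJ v\perp\mathcal K$, the latter asks $\mJ v\perp\ker\mE$), so your initial $\mathcal N_0$ is too big and the coupling $\mJ_{3,4}$ can be nonzero. Your fallback — iterate, pushing kernel vectors of $\mJ_{3,3}-\mR_{3,3}$ into $\mathcal N_0$ — is not clearly well-founded: it does not explain how the $\mJ$-decoupling of the (enlarged) $\mathcal N_0$ from block~3 and from itself is restored after each pass, and the termination claim "each pass strictly decreases $\dim\mathcal N_0$" is stated backwards (adding vectors to $\mathcal N_0$ increases its dimension). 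Finally, your closing remark that no third deflation level is needed "because such regular pencils have index at most two" imports an assumption the lemma does not make: the staircase form holds for \emph{any} triple $(\mE,\mJ,\mR)$ with the stated symmetries, regular or not; higher-index or singular behavior simply shows up in which blocks are nonempty. The clean way to close the gap is exactly the paper's one-shot Step~2: work with $\tJ_{2,2}-\tR_{2,2}$, not with $\mR$ and $\mJ$ separately.
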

The proof is given as a constructive algorithm, see~\cite[Algorithm 5]{AAM21}, which is reproduced here as Algorithm~\ref{algorithm:staircase:E_J-R} below.

\smallskip
\setcounter{algorithm}{4} 
\begin{breakablealgorithm}
\caption{Staircase Algorithm for triple~$(\mE,\mJ,\mR)$}
\label{algorithm:staircase:E_J-R}
\begin{algorithmic}[1]
\Statex ----------- \textit{Step 1} -----------
\State Perform a spectral decomposition of~$\mE$ such that
\[
\mE =\mU_E \begin{bmatrix} \tE_{1,1} & 0 \\ 0 & 0 \end{bmatrix} \mU_E^H,
\]
with $\mU_E\in\Cnn$ unitary, $\tE_{1,1}\in \C^{\tilde n_1, \tilde n_1}$ positive definite or $\tilde n_1=0$.
\State Set 
\begin{align*}
\mP 
&:= \mU^H_E , \qquad
\tJ
:= \mU^H_E\ \mJ\ \mU_E
=\begin{bmatrix}
 \tJ_{1,1} & -\tJ_{2,1}^H \\
 \tJ_{2,1} & \tJ_{2,2}
 \end{bmatrix} ,
\\ %
\tR
&:= \mU^H_E\ \mR\ \mU_E
=\begin{bmatrix}
 \tR_{1,1} & \tR_{2,1}^H \\
 \tR_{2,1} & \tR_{2,2} \end{bmatrix} , \qquad
\tE :=\mU^H_E\ \mE\ \mU_E .
\end{align*}
\Statex ----------- \textit{Step 2} -----------
\If{$\tilde n_1<n$}
\State Apply~\cite[Lemma 2]{AAM21} to $\tJ_{2,2}-\tR_{2,2}\in\C^{(n-\tilde n_1)\times (n-\tilde n_1)}$ such that
\[
\mP_{2,2}\ (\tJ_{2,2}-\tR_{2,2})\ \mP_{2,2}^H
=\begin{bmatrix} \tSigma_{2,2} & 0 \\ 0 & 0\end{bmatrix},
\]
with $\tSigma_{2,2}\in \mathbb C^{\tilde n_2,\tilde n_2}$ invertible or $\tilde n_2=0$.
\EndIf
\State Set
\[
\mP_2:=\begin{bmatrix} \mI& 0 \\ 0 & \mP_{2,2} \end{bmatrix} \in\Cnn,\qquad \mP:=\mP_2 \mP.
\]
\State Set $\tE:=\mP_2\ \tE\ \mP_2^H$,
\[
\def\arraystretch{1.4}
\tJ := \mP_2\ \tJ\ \mP_2^H
=: \left[ \begin{array}{c|cc}
 \tJ_{1,1} & -\tJ_{2,1}^H & -\tJ_{3,1}^H  \\
 \hline 
 \tJ_{2,1} & \tJ_{2,2} & 0 \\
 \tJ_{3,1}& 0 & 0
\end{array}\right],\qquad
\tR := \mP_2\ \tR\ \mP_2^H
=: \left[ \begin{array}{c|cc}
 \tR_{1,1} & \tR_{2,1}^H & 0 \\
 \hline 
 \tR_{2,1} & \tR_{2,2} & 0 \\
 0 & 0 & 0
\end{array}\right],\quad
\]
with $\tJ_{2,2}-\tR_{2,2}=\tSigma_{2,2}$.
(The lines indicate the partitioning of the block matrices~$\tJ$ and~$\tR$ in the previous step.
Note that the positive semi-definiteness of the Hermitian matrix~$\mR$ implies the~$0$ structure in~$\tR$.)
\Statex ----------- \textit{Step 3} -----------
\State Define $\tilde n_3 :=n -\tilde n_1 -\tilde n_2$.
\If{$\tilde n_3>0$}
\State Perform an SVD of $\tJ_{3,1}$ such that
\[
\tJ_{3,1} = \mU_{3,1} \begin{bmatrix} \tSigma_{3,1} & 0\\ 0 & 0 \end{bmatrix} \mV^H_{3,1} \in \C^{\tilde n_3 \times \tilde n_1}\,,
\]
\hspace{12pt} with~$\tSigma_{3,1}\in\R^{n_1 \times n_1}$ nonsingular diagonal or $n_1 =0$.
\EndIf
\State Set
\[
\mP_3 :=\begin{bmatrix}
 \mV_{3,1}^H & & \\
  & \mI & \\
  & & \mU_{3,1}^H
\end{bmatrix} \in\Cnn,\qquad
\mP:= \mP_3 \mP.
\]
\State Set $\widecheck \mE
:= \mP_3\ \tE\ \mP_3^H$, $\widecheck\mJ
:= \mP_3\ \tJ\ \mP_3^H$, $\widecheck\mR
:= \mP_3\ \tR\ \mP_3^H$ such that
\begin{align*}
\widecheck \mE
&=: \left[ \begin{array}{cc|c|cc}
 \mE_{1,1} &    \mE_{2,1}^H    &  0 &0 & 0 \\
 \mE_{2,1} & \mE_{2,2} &  0    &   0      & 0 \\
\hline
 0  & 0  & 0  & 0  & 0 \\
\hline
 0  & 0  & 0  & 0  & 0 \\
 0  & 0  & 0  & 0  & 0
\end{array}\right],
\\
\widecheck\mJ
&=:\left[ \begin{array}{cc|c|cc}
\mJ_{1,1}  &  -\mJ_{2,1}^H  &  -\mJ_{3,1}^H  &  -\mJ_{4,1}^H  &  0 \\
\mJ_{2,1}  &  \mJ_{2,2}  &  -\mJ_{3,2}^H  &  0  &  0 \\
\hline
\mJ_{3,1}  &  \mJ_{3,2}  &  \mJ_{3,3}  &  0  &  0 \\
\hline
\mJ_{4,1}  &  0  &  0  &  0  &  0 \\
0  &  0  &  0  &  0  &  0
\end{array}\right],
\qquad %
\widecheck\mR
=:\left[ \begin{array}{cc|c|cc}
\mR_{1,1}  &  \mR_{2,1}^H  &  \mR_{3,1}^H &  0  &  0 \\
\mR_{2,1}  &  \mR_{2,2}  &  \mR_{3,2}^H  &  0  &  0 \\
\hline
\mR_{3,1}  &  \mR_{3,2}  &  \mR_{3,3}    &  0  &  0 \\
\hline
0  &  0  &  0  &  0  &  0 \\
0  &  0  &  0  &  0  &  0
\end{array}\right],
\end{align*}
which are of the desired form with $n_2:=\tilde n_1 -n_1$, $n_3 :=\tilde n_2$, $n_4:=n_1$, $n_5:=\tilde n_3 -n_4$.
The matrices~$\mJ_{4,1}:=\tSigma_{3,1}$ and $\mJ_{3,3}-\mR_{3,3} =\tJ_{2,2}-\tR_{2,2} =\tSigma_{2,2}$ are invertible.
\end{algorithmic}
\end{breakablealgorithm}

%
The pencil~$\lambda \mE-(\mJ-\mR)$ is associated to the DAE~\eqref{DAE:EA} with~$\mA=\mJ-\mR$. Using the above lemma, it can be transformed into a DAE in staircase form,
\begin{equation}\label{DAE:EA:hat:0}
\widecheck\mE\dot y =(\widecheck\mJ -\widecheck\mR)y\ ,
\qquad\text{with } y:=\mP x\ .
\end{equation}
In analogy to \eqref{staircase:EJR}, the vector $y$ can be partitioned as $y=(y_1,...,y_5)^T$ with subvectors of the respective length $n_1,...,n_5$.
For systems~\eqref{DAE:EA:hat:0} with~$n_5=0$, the underlying implicit ODE systems are given by the system in~$y_2$  that are obtained by eliminating all other variables.
For example, if $n_2>0$ and $n_3\geq 0$, then this yields systems of the form
\begin{equation}\label{underlyingODE:y}
\mE_{2,2} \dot y_2
=\widehat \mA_{2,2} y_2
=(\widehat \mJ_{2,2}-\widehat \mR_{2,2}) y_2,
\end{equation}
with $\mE_{2,2}=\widehat\mE_{2,2}$ Hermitian positive definite and $\widehat \mA_{2,2}$ semi-dissipative.
Here
\begin{multline*}
 \widehat\mJ_{2,2} :=(\widehat\mA_{2,2})_S \ , \quad
 \widehat\mR_{2,2} :=-(\widehat\mA_{2,2})_H \ , 
\\
 \text{where } 
 \widehat\mA_{2,2} 
=\begin{cases} 
\widecheck\mA_{2,2} \ , & \text{if } n_3=0\ , \\
\widecheck\mA_{2,2} -\widecheck\mA_{2,3}\widecheck\mA_{3,3}^{-1}\widecheck\mA_{3,2} \ , & \text{if } n_3>0\ .
 \end{cases}
\end{multline*}

This staircase form now allows to define the hypocoercivity index also for DAEs:

\begin{definition}[{\cite[Definition 5]{AAM21}}] \label{def:DAE:mHC}
Consider a linear semi-dissipative Hamiltonian DAE system~\eqref{DAE:EA} with a regular pencil~$\lambda \mE-\mA$ and the unitarily congruent DAE~\eqref{DAE:EA:hat:0} in staircase form~\eqref{staircase:EJR}.
If the underlying implicit ODE~\eqref{underlyingODE:y} is missing (present) then system~\eqref{DAE:EA} is said to exhibit \emph{(non-)trivial dynamics}.
In case of non-trivial dynamics, the~\emph{HC-index~$m_{HC}$} of~$\lambda \mE-\mA$ is defined as the HC-index of the system matrix~$(\mE_{2,2}^{1/2})^{-1} \widehat{\mA}_{2,2} (\mE_{2,2}^{1/2})^{-1}$ of~\eqref{underlyingODE:y} (in the sense of Definition \ref{def:matrix:hypocoercive}), otherwise it is defined as~$0$.
\end{definition}
The following proposition 
states that the HC-index characterizes the short time behavior of its solution propagator, but restricted to the dynamical subspace.
\begin{proposition}[{\cite[Proposition 3]{AAM21}}] \label{prop:DAE+HC-decay}
Consider the semi-dissipative Hamiltonian DAE~\eqref{DAE:EA} with a regular, negative hypocoercive pencil~$\lambda\mE-\mA$, DAE-index at most two, non-trivial dynamics, and consistent initial condition~$x(0)$.
Then its (finite) HC-index is $\mHC\in\N_0$, if and only if
\[
\|S(t)\|_{\mE}
=
1 -c t^a +\bigO(t^{a+1})
\qquad
\text{for } t\to 0^+ \ ,
\]
where $c>0$ and $a=2\mHC+1$, and the propagator (semi-)norm pertaining to the evolution of~\eqref{DAE:EA} reads
\[
\|S(t)\|_{\mE}
:=
\sup_{\stackrel{\|x(0)\|_{\mE}=1}{\text{for consistent } x(0)}}
\|x(t)\|_{\mE} \ ,
\qquad
t\geq 0 \ .
\]
\end{proposition}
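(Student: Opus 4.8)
The plan is to strip the algebraic part of the DAE, identify $\|S(t)\|_\mE$ with the spectral-norm propagator of a genuine finite-dimensional semi-dissipative ODE, and then invoke the short-time characterization for ODEs recalled in Remark~\ref{rem:short-t-decay}(2). Concretely, I would first bring $\lambda\mE-\mA$ (with $\mA=\mJ-\mR$) into the staircase form of Lemma~\ref{lem:tSF} by a unitary $\mP$, so that \eqref{DAE:EA} turns into \eqref{DAE:EA:hat:0} with $y=\mP x$ partitioned as $y=(y_1,\dots,y_5)$ according to \eqref{staircase:EJR}. Regularity together with DAE-index at most two forces $n_5=0$, and the block $\mJ_{4,1}$ (present exactly when $n_1=n_4>0$) is invertible; the fourth block row of \eqref{DAE:EA:hat:0} then reads $0=\mJ_{4,1}y_1$, so $y_1\equiv0$ for every consistent initial value. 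The component $y_3$ (if $n_3>0$) is algebraically enslaved to $y_2$, the component $y_4$ is likewise an algebraic function of $y_2$ obtained from the first block row, and the genuinely dynamical component $y_2$ solves the underlying implicit ODE \eqref{underlyingODE:y}, i.e.\ $\mE_{2,2}\dot y_2=\widehat\mA_{2,2}y_2$ with $\mE_{2,2}>0$ and $\widehat\mA_{2,2}$ semi-dissipative.

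Next I would observe that the $\mE$-seminorm collapses onto the dynamical component. Since $\mP$ is unitary and the only nonzero blocks of $\widecheck\mE$ sit in its leading $2\times2$ block (the enslaved components $y_3,y_4,y_5$ do not appear in $\widecheck\mE$ at all), using $y_1\equiv0$ one gets
\[
 \|x(t)\|_\mE^2
 = y(t)^H\widecheck\mE\,y(t)
 = y_2(t)^H\mE_{2,2}\,y_2(t)
 = \|\mE_{2,2}^{1/2}y_2(t)\|_2^2 .
\]
Setting $z:=\mE_{2,2}^{1/2}y_2$ transforms \eqref{underlyingODE:y} into $\dot z=\tA z$ with $\tA:=(\mE_{2,2}^{1/2})^{-1}\widehat\mA_{2,2}(\mE_{2,2}^{1/2})^{-1}$, which is again semi-dissipative, and $\|x(t)\|_\mE=\|z(t)\|_2$. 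As $x(0)$ ranges over the consistent initial data with $\|x(0)\|_\mE=1$, the vector $z(0)$ ranges over the entire Euclidean unit sphere of $\C^{n_2}$, hence $\|S(t)\|_\mE=\|e^{\tA t}\|_2$.

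It remains to quote the ODE result. Writing $\tA=\mJ-\mR$ with $\mJ=\tA_S$ skew-Hermitian and $\mR=-\tA_H\ge0$, and putting $\mC:=-\tA=\mR-\mJ$, one has $e^{\tA t}=e^{-\mC t}$; by Definition~\ref{def:DAE:mHC} the HC-index of $\lambda\mE-\mA$ is, by definition, the HC-index of $\tA$, equivalently that of $\mC$ in the sense of Definition~\ref{def:matrix:hypocoercive}, and since the pencil is negative hypocoercive with non-trivial dynamics this index is a well-defined finite number $\mHC\in\N_0$. Now Remark~\ref{rem:short-t-decay}(2) (i.e.\ Theorem~2.7(a) of \cite{AAC22}) states that $\mC$ has HC-index equal to $\mHC$ if and only if $\|e^{-\mC t}\|_2=1-ct^{2\mHC+1}+\bigO(t^{2\mHC+2})$ as $t\to0^+$ for some $c>0$. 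Combining this with $\|S(t)\|_\mE=\|e^{-\mC t}\|_2$ from the previous step yields both implications of the proposition with $a=2\mHC+1$.

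The main obstacle lies entirely in the two reduction steps: one must carefully justify, within the staircase form, that consistent initial data force $y_1\equiv0$ (here the index-$\le2$ hypothesis enters, through invertibility of $\mJ_{4,1}$ and $n_5=0$) and that, as a consequence, the $\mE$-seminorm of the full trajectory reduces exactly to the Euclidean norm of the dynamical component $z$, with no leftover contribution from the enslaved variables $y_1,y_3,y_4$. Once the identity $\|S(t)\|_\mE=\|e^{-\mC t}\|_2$ is in place, the asymptotic expansion is precisely the finite-dimensional ODE statement and can be cited verbatim from \cite{AAC22}.
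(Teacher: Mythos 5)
The paper does not actually prove this proposition; it only cites it as \cite[Proposition 3]{AAM21}. So there is no in-paper proof to compare against. Judged on its own, your reconstruction is sound and, given the surrounding machinery (the staircase form in Lemma~\ref{lem:tSF}, the construction of the underlying ODE in~\eqref{underlyingODE:y}, and the DAE HC-index in Definition~\ref{def:DAE:mHC}), it is almost certainly the intended route: reduce to the genuinely dynamical block, show the $\mE$-seminorm of the full DAE trajectory restricted to consistent data coincides with the Euclidean norm of $z=\mE_{2,2}^{1/2}y_2$, conclude $\|S(t)\|_\mE=\|e^{\tA t}\|_2$, and invoke the short-time ODE characterization from \cite{AAC22} quoted in Remark~\ref{rem:short-t-decay}(2). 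The two structural facts you flag as "obstacles" are handled correctly: $n_5=0$ follows from regularity (otherwise the $y_5$-block is unconstrained), the fourth block row $0=\mJ_{4,1}y_1$ with invertible $\mJ_{4,1}$ forces $y_1\equiv0$ on consistent trajectories (this is where DAE-index $\le 2$ and the staircase structure enter), and then $\widecheck\mE$ has support only in the leading $2\times2$ block, so with $y_1\equiv0$ one gets $\|x(t)\|_\mE^2=y_2(t)^H\mE_{2,2}y_2(t)$ exactly; since $y_3,y_4$ are algebraically enslaved to $y_2$ and do not appear in $\widecheck\mE$, the map from consistent normalized initial data to the Euclidean unit sphere in $z(0)$ is indeed a bijection and the supremum identity $\|S(t)\|_\mE=\|e^{-\mC t}\|_2$ holds. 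No gap.

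One minor point worth being explicit about if you write this out in full: the argument as stated assumes $n_1=n_4>0$; when $n_1=0$ (no fourth block) or $n_3=0$ the reductions are even simpler, so you should note these degenerate cases are covered trivially. Also be aware that Definition~\ref{def:DAE:mHC} speaks of the HC-index of the system matrix $\tA=(\mE_{2,2}^{1/2})^{-1}\widehat\mA_{2,2}(\mE_{2,2}^{1/2})^{-1}$ "in the sense of Definition~\ref{def:matrix:hypocoercive}", which formally is stated for $\mC=\mR-\mJ$ with $\mR\ge0$; the index depends only on the pair $(\mJ,\mR)$ and hence is the same for $\tA$ and $-\tA$, so your passage from $\tA$ to $\mC=-\tA$ is harmless, but it deserves a remark.
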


The anisotropic Oseen model~\eqref{model:Oseen:anisotropic} with constant $b\in\R^2$ and $b_1\ne 0$ is not hypocoercive, see Proposition~\ref{prop:Oseen:constantU:notHC}. 
This lack of hypocoercivity can also be verified by considering \eqref{model:Oseen:anisotropic} with $b\in\R^2$ and $b_1\ne 0$ as a partial differential-algebraic equation (PDAE), and bringing its modal representation into staircase form:
\begin{eexample}\label{ex:Oseen:constantU}
Consider the anisotropic Oseen model~\eqref{model:Oseen:anisotropic} with constant vector $b\in\R^2$.
Proceeding as in~\S\ref{ssec:Stokes} yields with $\mP_k$ from~\eqref{ex:Stokes:Pk}:
\[
 \widecheck \mE\ \dot y_k (t) 
 =(\widecheck \mJ_k -\widecheck \mR_k)\ y_k(t) 
 \ , \qquad %
 t\geq 0 \ ,
\]
with $\widecheck\mE =\diag(1,1,0)$, $\widecheck\mR_k =\diag(\nu k_2^2,\nu k_2^2, 0)$, and $\widecheck \mJ_k$ as in~\eqref{checkJk}.
The modes with $k_2=0$ (and $k_1\ne 0$) imply $y_{k,1}(t)= \ukI (t)\,{k_1}/{|k_1|} =0$, which is also a consistency condition on the initial value~$\ukI(0)$, i.e.\ the divergence-free condition of these initial modes. 
Since the corresponding $\widecheck\mR_k=0$, the modes with $k=(k_1,0)$ are not hypocoercive. 
In fact, they are purely oscillatory and have no damping.

This modal approach also shows that general solutions to~\eqref{model:Oseen:anisotropic} in~$\wcH$ converge with rate~$\mu=\nu$ to traveling waves like~\eqref{counterex:Oseen:constantU:notHC}.
\end{eexample}

\subsection{Review of Algorithm 3 from \cite{AAM21}}
\label{app:algorithm:1Pi}

The purpose of Algorithm 3 from \cite{AAM21} is to construct an ansatz for strict Lyapunov functionals for semi-dissipative Hamiltonian ODEs~\eqref{ODE:A} with negative hypocoercive matrix $\mA\in\Cnn$.
In~\cite{AAC22b},  explicit restrictions on~$\epsilon_j$ (relative to other parameters) were derived such that a suitable choice of $\epsilon_j$ turns the ansatz in Step~10 of Algorithm~\ref{algorithm:1Pi} into a strict Lyapunov functional.

Consider a semi-dissipative matrix~$\mA=\mJ-\mR$ with finite HC-index, then Algorithm~3 reads as follows:

\smallskip
\setcounter{algorithm}{2} 
\begin{breakablealgorithm}
\caption{Construction of a strict Lyapunov functional}
\label{algorithm:1Pi}
\begin{algorithmic}[1]
\Require $\Pi_0:=\mI$, $\tA_0:=-\mJ$, $\tB_0:=\mR$, $j:=1$
\State Construct an orthogonal projection~$\widetilde{\Pi}_j$ onto $\ker \big(\tB_{j-1} \tB_{j-1}^H\big)$.
\State $\Pi_j :=\widetilde{\Pi}_j \Pi_{j-1}$
\While{$\Pi_j \ne0$}
\State Set $\tA_j :=\Pi_j \tA_{j-1} \Pi_j$, $\tB_j :=\Pi_j \tA_{j-1} (\Pi_{j-1} -\Pi_j)$.
\State $j := j+1$
\State Construct an orthogonal projection~$\widetilde{\Pi}_j$ onto $\ker \big(\tB_{j-1} \tB_{j-1}^H\big)$.
\State $\Pi_j :=\widetilde{\Pi}_j \Pi_{j-1}$
\EndWhile
\State $\mHC :=j-1$
\State Set $\mX :=\Pi_0 +\sum_{j=1}^{\mHC} \epsilon_j \big(\tA_{j-1}\Pi_j +\Pi_j \tA_{j-1}^H\big)$ for sufficiently small $\epsilon_j>0$.
\Ensure $\|\cdot\|_\mX^2 :=\ip{\cdot}{\mX\cdot}$
\end{algorithmic}
\end{breakablealgorithm}
%

\section{Auxiliary results for the infinite dimensional ODE (\ref{ODE:Oseen:y2})}
\subsection{Quantitative estimate of~$\kappa$ for the evolution generators from Proposition~\ref{prop:Oseen:y2}} 
\label{app:QuantitativeEstimate}

\newcommand{\hj}{\widehat{j}}
\newcommand{\hk}{\widehat{k}}
\newcommand{\hkm}{\hk_-}
\newcommand{\hkp}{\hk_+}
\newcommand{\hl}{\widehat{\ell}}
\newcommand{\normalized}[1]{\frac{#1}{|#1|}}
Here we compute the HC-index of the system matrix~$(\hR-\hJ_{k_1})_{2,2}$ in the modal dynamics~\eqref{ODE:Oseen:y2} using directly Definition~\ref{def:unb-op:HC-index}, and derive a quantitative estimate for $\kappa>0$ in~\eqref{Op:J-R:unbdd}.
This is an alternative approach to the proof given for Proposition~\ref{prop:Oseen:y2}.
\begin{proposition}\label{prop:Oseen:y2:kappa}
Let $b_1=\sin (x_2)$ in~\eqref{model:Oseen:anisotropic_2}.
Then, for each $k_1\in\Zo$, the system matrix~$-{\mA}_{k_1}:=(\hR-\hJ_{k_1})_{2,2}$ in the modal dynamics~\eqref{ODE:Oseen:y2} has HC-index $\mHC=1$, and satisfies~\eqref{Op:J-R:unbdd} for some $\kappa\geq \nu/100>0$.
\end{proposition}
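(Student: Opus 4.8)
The plan is to verify condition~\eqref{Op:J-R:unbdd} of Definition~\ref{def:unb-op:HC-index} directly for $m=1$, with an explicit constant; this runs parallel to, but is independent of, the surjectivity argument in the proof of Proposition~\ref{prop:Oseen:y2}. Write $J:=(\widehat{\mJ}_{k_1})_{2,2}$ and recall from~\eqref{VhRzz} that $(\widehat{\mR}_{2,2})^{1/2}=\sqrt\nu\,D$ with $D:=\diag(|k_2|;\,k_2\in\Z)$, and that $J^*=-J$. Then the two summands in~\eqref{Op:J-R:unbdd} equal $\nu\|Dz\|^2$ and $\nu\|DJz\|^2$, so it is enough to prove
\[
 \|Dz\|^2+\|DJz\|^2\ \geq\ \tfrac{10}{49}\,\|z\|^2\qquad\text{for all }z\in\cD\big((\widehat{\mR}_{2,2})^{1/2}\big),
\]
\emph{uniformly} in $k_1\in\Zo$, whereupon $\kappa:=\tfrac{10}{49}\nu\geq\nu/100$ works. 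Moreover $\mHC\neq0$, since $e_0\in\ker D=\ker(\widehat{\mR}_{2,2})^{1/2}$ forbids the $m=0$ version of~\eqref{Op:J-R:unbdd}; together with the case $m=1$ this gives $\mHC=1$.

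The first term already dominates all modes except $z_0$, with a surplus at $k_2=\pm2$: since $k_2^2\geq1$ for $k_2\neq0$ and $k_2^2=4$ for $k_2=\pm2$, I would record
\[
 \|Dz\|^2\ \geq\ \big(|z_2|^2+|z_{-2}|^2\big)+\sum_{k_2\neq0}|z_{k_2}|^2 .
\]
The job of $\|DJz\|^2$ is to recover $|z_0|^2$. From the tridiagonal form~\eqref{DAE:Oseen:staircase:J}, $z_0$ enters $J$ only through the entries $J(\pm1,0)$, so $(Jz)_{1}=-\gamma'z_0+\delta_{1}z_2$ and $(Jz)_{-1}=\gamma'z_0+\delta_{-1}z_{-2}$ with $|\gamma'|=\tfrac{k_1^2}{2\sqrt{k_1^2+1}}$ and $|\delta_{\pm1}|=\tfrac{|k_1|(k_1^2+2)}{2\sqrt{k_1^2+1}\,\sqrt{k_1^2+4}}$, and keeping only the $k_2=\pm1$ terms gives $\|DJz\|^2\geq|(Jz)_1|^2+|(Jz)_{-1}|^2$.

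The one delicate point—and the step I expect to be the real obstacle—is that $|\delta_{\pm1}|$ is \emph{unbounded} in $k_1$ (it grows like $|k_1|/2$), so a crude triangle-inequality split of $(Jz)_{\pm1}$ leaves a term $|\delta_{\pm1}|^2|z_{\pm2}|^2$ that cannot be absorbed $k_1$-uniformly. My fix is to use the weighted Young inequality $(a+b)^2\leq(1+t)a^2+(1+t^{-1})b^2$ with the tuned parameter $t=|\delta_{\pm1}|^2$, starting from $|\gamma'|\,|z_0|\leq|(Jz)_{\pm1}|+|\delta_{\pm1}|\,|z_{\pm2}|$; after pulling out the common factor $1+|\delta_{\pm1}|^2$ this yields $|\gamma'|^2|z_0|^2\leq(1+|\delta_{\pm1}|^2)\big(|(Jz)_{\pm1}|^2+|z_{\pm2}|^2\big)$, i.e.
\[
 |(Jz)_{\pm1}|^2\ \geq\ \frac{|\gamma'|^2}{1+|\delta_{\pm1}|^2}\,|z_0|^2-|z_{\pm2}|^2 ,
\]
where the $|z_{\pm2}|^2$-coefficient is now exactly $1$. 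A short elementary computation with $s:=k_1^2\geq1$ gives $\dfrac{|\gamma'|^2}{1+|\delta_{\pm1}|^2}=\dfrac{s^2(s+4)}{s^3+8s^2+24s+16}\geq\dfrac{5}{49}$ (equality at $k_1=\pm1$; it reduces to $44s^3+156s^2-120s-80\geq0$ for $s\geq1$, true since this cubic vanishes at $s=1$ with positive derivative there). Summing over the two signs, $\|DJz\|^2\geq\tfrac{10}{49}|z_0|^2-\big(|z_2|^2+|z_{-2}|^2\big)$; adding this to the bound for $\|Dz\|^2$ cancels the $\pm2$ terms and leaves $\|Dz\|^2+\|DJz\|^2\geq\tfrac{10}{49}\|z\|^2$, which completes the argument. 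Everything but the uniformity issue is elementary, and the bound obtained, $\kappa=\tfrac{10}{49}\nu$, is in fact comfortably stronger than the advertised $\nu/100$.
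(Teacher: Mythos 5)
Your proof is correct, and it takes a genuinely different route from the paper's. The paper's argument decomposes the unit vector $x=v+w$ into its components in $(\ker\hR^{1/2})^\perp$ and $\ker\hR^{1/2}$, sets $\alpha=\|v\|$ and $\beta=|\Re(u_{\pm2})|$ for the normalized vector $u=v/\alpha$, and is then led to the two-parameter minimization of the function $h(\alpha,\beta)$ in~\eqref{h:alpha:beta} over $[0,1]^2$, which requires a rather careful analysis of interior critical points and boundary values, together with a separate study of the auxiliary functions $a,b,c,g$. Your argument instead isolates the single obstruction — that the coefficient $\delta_{\pm1}$ coupling $z_0$ to $z_{\pm2}$ in $(Jz)_{\pm1}$ grows like $|k_1|/2$ — and eliminates it with the weighted Young inequality $(a+b)^2\le(1+t)a^2+(1+t^{-1})b^2$ evaluated at the \emph{tuned} weight $t=|\delta_{\pm1}|^2$, which makes the coefficient of $|z_{\pm2}|^2$ exactly $1$ and therefore exactly cancellable against the surplus in $\|Dz\|^2$. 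What remains is a one-variable cubic inequality $44s^3+156s^2-120s-80\geq0$ for $s=k_1^2\geq1$, checked by evaluating the cubic and its derivative at $s=1$. This is both more elementary (no 2D boundary analysis) and yields a sharper constant: you obtain $\kappa=\tfrac{10}{49}\nu\approx0.20\,\nu$ versus the paper's $\kappa\geq\nu/100$. The step ruling out $\mHC=0$ via $e_0\in\ker(\hRzz)^{1/2}$ and the reduction to $\|Dz\|^2+\|DJz\|^2\geq\kappa'\|z\|^2$ with $D=\diag(|k_2|)$ both agree with the paper. All of the auxiliary computations (the entries $J(\pm1,0)$, $J(\pm1,\pm2)$, the identity $\frac{|\gamma'|^2}{1+|\delta_{\pm1}|^2}=\frac{s^2(s+4)}{s^3+8s^2+24s+16}$, and the estimate $\|Dz\|^2\geq(|z_2|^2+|z_{-2}|^2)+\sum_{k_2\neq0}|z_{k_2}|^2$) check out.
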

\begin{proof}
We show that there exists $\kappa>0$ such that~\eqref{Op:J-R:unbdd} with $m=1$, $\mJ=\hJzz:=\hJkzz$ and $\mR=\hRzz$ holds.
Let $x\in\cD(\VhRzz)$ be a unit vector, and decompose $x=v+w$ where $v\in(\ker(\VhRzz))^\perp$ and $w\in\ker(\VhRzz)$.
Define $\alpha:=\|v\|_2$, and then supposing $0<\alpha<1$, define the unit vector $u:=v/\alpha$ and note that $w/\sqrt{1-\alpha^2}=e_0$, cf.~\eqref{DAE:Oseen:staircase:R}.
We now compute 
\begin{equation} \label{est:xRx}
 \normBig{\VhRzz x}^2
=\normBig{\VhRzz v}^2
=\alpha^2 \normBig{\VhRzz u}^2
\geq \alpha^2 \nu
\end{equation}
using~\eqref{DAE:Oseen:staircase:R}.
In the same way, we find 
\begin{equation} \label{est:xJRJx}
\begin{split}
&\normBig{\VhRzz\hJzz^* x}^2 \\
&=\ipBig{\VhRzz\hJzz^* (v+w)}{\VhRzz\hJzz^* (v+w)} \\
&=\normBig{\VhRzz\hJzz^* v}^2 +2\Re\Big(\ipBig{\VhRzz\hJzz^* v}{\VhRzz\hJzz^* w}\Big) +\normBig{\VhRzz\hJzz^* w}^2 \\
&=\alpha^2 \normBig{\VhRzz\hJzz^* u}^2 +\alpha\sqrt{1-\alpha^2} 2\Re\Big(\ipBig{\VhRzz\hJzz^* u}{\VhRzz\hJzz^* e_0} \Big) 
\\
&\qquad +(1-\alpha^2) \normBig{\VhRzz\hJzz^* e_0}^2 \ .
\end{split}
\end{equation}

\medskip\noindent
\underline{Step 1:} We now estimate from below the three terms of~\eqref{est:xJRJx}.
For $k_1\ne 0$, the operator $\hJzz:=(\hJ_{k_1})_{2,2}$ given in~\eqref{DAE:Oseen:staircase:J} can be written as 
\begin{multline} \label{hJzz}
 \hJzz (k_2,\ell) 
= \ip{e_{k_2}}{\hJzz e_{\ell}}
=\frac{k_1}{2} 
 \begin{cases}
  \pm \frac{\hk}{|\hk|}\cdot\frac{\hl}{|\hl|}\ , &\text{for } \ell=k_2\pm 1\ , \\
  0\ , &\text{else,}
 \end{cases}
\\
\text{where }
 \hk:=\begin{bmatrix} k_1 \\ k_2 \end{bmatrix}, \quad
 \hl:=\begin{bmatrix} k_1 \\ \ell \end{bmatrix} ,
\end{multline}
such that $\hJzz =-\hJzz^*$ and
\begin{equation*} 
 \Big((\hRzz)^{1/2} \hJzz^*\Big) (k_2,\ell) 
= \ip{e_{k_2}}{(\hRzz)^{1/2} \hJzz^* e_{\ell}}
=\sqrt{\nu}\ \frac{k_1}{2}
 \begin{cases}
  \pm |k_2| \frac{\hk}{|\hk|}\cdot\frac{\hl}{|\hl|}\ , &\text{for } \ell=k_2\mp 1 , \\
  0\ , &\text{else.}
 \end{cases}
\end{equation*}
In particular, we deduce
\begin{equation}
\begin{split}
 \Big(\VhRzz\hJzz^*\Big) e_0 (k_2) 
=\sqrt{\nu}\ \frac{k_1}{2}
 \begin{cases}
  \pm \frac{k_1^2}{\sqrt{k_1^2+1}\ |k_1|}\ , &\text{for } k_2=\pm 1\ , \\
  0\ , &\text{else,}
 \end{cases}
\end{split}
\end{equation}
such that
\begin{subequations} \label{est:xJRJx_parts}
\begin{equation}
 \normBig{\VhRzz\hJzz^* e_0}^2 
=\frac{\nu}2 \frac{k_1^4}{(k_1^2+1)} \ .
\end{equation}

\medskip\noindent
Continuing with the second term of~\eqref{est:xJRJx}, and with $k_1\ne 0$, we derive using $u_0=0$ that
\begin{equation}
\begin{split}
&2\Re\Big(\ipBig{\VhRzz\hJzz^* u}{\VhRzz\hJzz^* e_0}\Big)
 \\
&=-\nu \frac{k_1^2}{4} \frac{(k_1^2+2)\ k_1^2}{(k_1^2+1)\ |k_1|\ \sqrt{k_1^2+4}} 2 \big(\Re(u_{-2}) +\Re(u_2) \big)
 \\
&\geq -\nu \frac{k_1^2}{4} \frac{(k_1^2+2)\ k_1^2}{(k_1^2+1)\ |k_1|\ \sqrt{k_1^2+4}} 2 \big(|\Re(u_{-2})| +|\Re(u_2)| \big) \ .
\end{split}
\end{equation}

\medskip\noindent
For the first term of~\eqref{est:xJRJx}, and with $k_1\ne 0$, we compute using $u_0=0$ that
\begin{equation} \label{est:uJRJu}
\begin{split}
\big\|\VhRzz\hJzz^* u \big\|^2 
&=\nu \frac{k_1^2}4 \sum_{k_2=-\infty}^{\infty} \Big| -|k_2| \normalized{\hk}\cdot\normalized{\hkm} u_{k_2-1} +|k_2| \normalized{\hk}\cdot\normalized{\hkp} u_{k_2+1} \Big|^2 \\
&=\nu \frac{k_1^2}4 \sum_{k_2=-\infty}^{\infty} |k_2|^2\ \Big| -\normalized{\hk}\cdot\normalized{\hkm} u_{k_2-1} +\normalized{\hk}\cdot\normalized{\hkp} u_{k_2+1} \Big|^2 \\
&\geq \nu \frac{k_1^2}4 \sum_{k_2=-1,1} |k_2|^2\ \Big| -\normalized{\hk}\cdot\normalized{\hkm} u_{k_2-1} +\normalized{\hk}\cdot\normalized{\hkp} u_{k_2+1} \Big|^2 \\
 %
 %
&= \nu \frac{k_1^2}4  \frac{(k_1^2+2)^2}{(k_1^2+1)\ (k_1^2 +4)} \Big( |u_{-2}|^2 + |u_{2}|^2 \Big) \\
&\geq \nu \frac{k_1^2}4  \frac{(k_1^2+2)^2}{(k_1^2+1)\ (k_1^2 +4)} \Big( (\Re(u_{-2}))^2 + (\Re(u_{2}))^2 \Big)  
\end{split}
\end{equation}
where 
\[
 \hk =\begin{bmatrix} k_1 \\ k_2 \end{bmatrix}, \qquad
 \hkm :=\begin{bmatrix} k_1 \\ k_2-1 \end{bmatrix}, \qquad
 \hkp :=\begin{bmatrix} k_1 \\ k_2+1 \end{bmatrix}. \qquad
\]
\end{subequations}

\medskip\noindent
Combining~\eqref{est:xJRJx} and~\eqref{est:xJRJx_parts} yields
\begin{equation}\label{est:xJRJx_final}
\begin{split}
&\normBig{\VhRzz\hJzz^* x}^2
 \\
&\geq \alpha^2 \nu \frac{k_1^2}4  \frac{(k_1^2+2)^2}{(k_1^2+1)\ (k_1^2 +4)} \Big( (\Re(u_{-2}))^2 + (\Re(u_{2}))^2 \Big) \\
& \quad 
-\alpha\sqrt{1-\alpha^2} \nu \frac{k_1^2}{2} \frac{(k_1^2+2)\ k_1^2}{(k_1^2+1)\ |k_1|\ \sqrt{k_1^2+4}} \big(|\Re(u_{-2})| +|\Re(u_2)| \big) 
+(1-\alpha^2) \frac{\nu}2 \frac{k_1^4}{(k_1^2+1)}
 \\
&= \nu \frac{k_1^2}4 \Bigg( \alpha \frac{k_1^2+2}{\sqrt{k_1^2+1}\ \sqrt{k_1^2 +4}} |\Re(u_{-2})|  
-\sqrt{1-\alpha^2} \frac{|k_1|}{\sqrt{k_1^2+1}} \Bigg)^2 \\
& \quad
 +\nu \frac{k_1^2}4 \Bigg( \alpha \frac{k_1^2+2}{\sqrt{k_1^2+1}\ \sqrt{k_1^2 +4}} |\Re(u_2)|  
-\sqrt{1-\alpha^2} \frac{|k_1|}{\sqrt{k_1^2+1}} \Bigg)^2 \\
&\geq \frac{\nu}4 \Big( \alpha \ a |\Re(u_{-2})| -\sqrt{1-\alpha^2} \ b \Big)^2 
 +\frac{\nu}4 \Big( \alpha \ a |\Re(u_2)| -\sqrt{1-\alpha^2} \ b \Big)^2 \ ,
\end{split}
\end{equation}
where 
\begin{equation} \label{a:b}
 a:=\frac{k_1^2+2}{\sqrt{k_1^2+1}\ \sqrt{k_1^2 +4}}, \qquad
 b:=\frac{|k_1|}{\sqrt{k_1^2+1}}.
\end{equation}
Together with~\eqref{est:xRx}, for any $x\in\cD(\VhRzz)$ with $\norm{x}=1$, we obtain
\begin{multline} \label{est:kappa:0}
 \sum_{j=0}^1 \normBig{\VhRzz(\hJzz^*)^j x}^2
 \\
\geq \nu \alpha^2 +\frac{\nu}4 \Big( \alpha \ a |\Re(u_{-2})| -\sqrt{1-\alpha^2} \ b \Big)^2 
 +\frac{\nu}4 \Big( \alpha \ a |\Re(u_2)| -\sqrt{1-\alpha^2} \ b \Big)^2 \ .
\end{multline}

\medskip\noindent
\underline{Step 2:}
Finally, we show that there exists $\kappa>0$ such that~\eqref{Op:J-R:unbdd} holds with $m=1$, $\mJ=\hJzz:=\hJkzz$ and $\mR=\hRzz$.
To this end, we estimate the left-hand side of~\eqref{est:kappa:0} uniformly in $\alpha\in[0,1]$, $|\Re(u_2)|\in[0,1]$, and $k_1\in\N$:
We observe that the function
\begin{equation} \label{b}
 b^2:\
 [1,\infty)\to [0,\infty)\ , \quad 
 k_1 \mapsto \frac{k_1^2}{k_1^2+1}\ ,
\end{equation}
is monotonically increasing such that $b^2(k_1)\geq b^2(1) =1/2$ for $k_1\in[1,\infty)$.
Then, we estimate the expression on the left-hand side of~\eqref{est:kappa:0} as
\begin{equation} \label{est:kappa}
\begin{split}
& \sum_{j=0}^1 \normBig{\VhRzz(\hJzz^*)^j x}^2
 \\
&\geq \nu \alpha^2 +\frac{\nu}4 \Big( \alpha \ a |\Re(u_{-2})| -\sqrt{1-\alpha^2} \ b \Big)^2 
 +\frac{\nu}4 \Big( \alpha \ a |\Re(u_2)| -\sqrt{1-\alpha^2} \ b \Big)^2 
 \\
&= \nu \Big(\alpha^2 
 +\frac{b^2}4 \Big( \alpha \frac{a}{b}\ |\Re(u_{-2})| -\sqrt{1-\alpha^2} \Big)^2 
 +\frac{b^2}4 \Big( \alpha \frac{a}{b}\ |\Re(u_2)| -\sqrt{1-\alpha^2} \Big)^2 \Big) 
 \\
&\geq \nu \Big(\alpha^2 
 +\frac18 \Big( \alpha \frac{a}{b}\ |\Re(u_{-2})| -\sqrt{1-\alpha^2} \Big)^2 
 +\frac18 \Big( \alpha \frac{a}{b}\ |\Re(u_2)| -\sqrt{1-\alpha^2} \Big)^2 \Big) 
 \\
&\geq \nu \Big(\alpha^2 
 +\frac18 \Big( \alpha \frac{a}{b}\ |\Re(u_2)| -\sqrt{1-\alpha^2} \Big)^2 \Big)\ .
\end{split}
\end{equation}
We continue to derive a lower bound for the function
\begin{equation} \label{h:alpha:beta}
 h:\
 [0,1]\times [0,1] \to [0,\infty)\ , \qquad
 (\alpha,\beta) \mapsto \alpha^2 
 +\frac18 \Big( \alpha \beta c -\sqrt{1-\alpha^2} \Big)^2 \ ,
\end{equation}
where $c:=a/b>0$ and $\beta$ replaces $|\Re(u_2)|$.
The function
\begin{equation} \label{c2}
 c^2:\
 [1,\infty)\to [0,\infty)\ , \quad 
 k_1 \mapsto \frac{a^2}{b^2} =\frac{(k_1^2+2)^2}{k_1^2\ (k_1^2 +4)}\ ,
\end{equation}
is monotonically decreasing such that $1\leq c^2(k_1)\leq c^2(1) =9/5$ for $k_1\in[1,\infty)$.

A straightforward computation shows that~$h$ has no extremum in the interior of~$[0,1]^2$.
Therefore, the minimum of~$h=h(\alpha,\beta)$ in~\eqref{h:alpha:beta} is located at the boundary of $(\alpha,\beta)\in[0,1]\times[0,1]$:
For $\beta\in[0,1]$, we derive
\[
 h(0,\beta) =\frac18 \ , \qquad
 h(1,\beta) =1 +\frac18 c^2 \beta^2 \geq 1 \ .
\]
For $\alpha\in[0,1]$, we derive
\[
 h(\alpha,0)
=\frac18 (7\alpha^2 +1)
\geq \frac18 \ ,
\]
and
\begin{equation} \label{h:alpha:1}
 h(\alpha,1)
=\alpha^2 +\frac18 \Big( \alpha c -\sqrt{1-\alpha^2} \Big)^2 \ .
\end{equation}
To finish the estimate, we have to derive a lower bound for $h(\alpha,1)$, $\alpha\in[0,1]$:
We observe that $h(0,1)=1/8$ and $h(1,1)=1 +c^2/8\geq 1$.
To find local extrema, we search for $\alpha_*\in(0,1)$ such that 
\[
 0
=\frac{\partial h}{\partial\alpha} (\alpha_*,1)
=\underbracket{\frac{\alpha_*}4 (7 +c^2)}_{>0}
 +\underbracket{\frac{c}4 \frac{2\alpha_*^2 -1}{\sqrt{1-\alpha_*^2}}}_{<0 \text{ on } (0,1/\sqrt{2})} \ ,  
\]
or,
\[
 \alpha_*^4 -\alpha_*^2 +\frac{c^2}{4c^2 +(7+c^2)^2} =0 \ .
\]
Solving for $\alpha_*^2$, we find the solutions 
\[
 (\alpha_*^2)_\pm 
=\frac12 \Big(1\pm \sqrt{\frac{(7+c^2)^2}{4c^2 +(7+c^2)^2}}\Big)
\in (0,1) \ .
\]
Using the positive root of $(\alpha_*^2)_-$ yields
\[
 h\Big(\sqrt{ (\alpha_*^2)_- },1\Big) 
\geq (\alpha_*^2)_- 
=\frac12 \Big(1 -\sqrt{\frac{(7+c^2)^2}{4c^2 +(7+c^2)^2}}\Big) >0 \ .
\]

\medskip
To derive a lower bound on~$h$, which is uniform w.r.t. $k_1\in[1,\infty)$, we recall that $1\leq c^2(k_1)\leq 2$ for $k_1\in[1,\infty)$ and study the function
\[
 g:\
 [1,2]\to [0,\infty)\ , \qquad
 \gamma \mapsto \frac{(7+\gamma)^2}{4\gamma +(7+\gamma)^2}\ .
\]
The function $g=g(\gamma)$ is monotone decreasing w.r.t. $\gamma\in[1,2]$ such that $81/89=g(2) \leq g(\gamma) \leq g(1)=64/68$.
This implies that 
\[
 (\alpha_*^2)_- =\frac12 \Big(1 -\sqrt{g(c^2)}\Big)
\geq \frac12 \Big(1 -\sqrt{g(1)}\Big) =0.0149\ldots >1/100 \ .
\]
Altogether, we derive that 
\[ 
 h(\alpha,\beta)>1/100 \qquad\text{for } (\alpha,\beta)\in[0,1]\times[0,1] \ ,
\]
which implies the uniform lower bound $\kappa\geq \nu/100$ in~\eqref{est:kappa}.
\end{proof}


\subsection{Proof of Lemma \ref{lemma:Oseen:LMI}}\label{appendix_three}
\begin{proof}
To prove uniform coercivity (w.r.t.\ $k_1\in\Z\setminus\{0\}$) of the self-adjoint operator $\mQ_{k_1} :=-(\mA_{k_1}^H \mX_{k_1} +\mX_{k_1}\mA_{k_1})$, we use $\mX_{k_1} =\mI +\epsilon_{k_1}\mY$ in~\eqref{model:Oseen:X} and $\mA_{k_1} =(\widehat{\mJ}_{k_1})_{2,2} -\widehat{\mR}_{2,2}$ to deduce that 
\[ \mQ_{k_1} =2\hRzz -\epsilon_{k_1} (\mA_{k_1}^H \mY +\mY\mA_{k_1}) \ . \]
Hence, $\mQ_{k_1}$ is the sum of the diagonal operator~$2\hRzz$, and the compact operator~$-\epsilon_{k_1} (\mA_{k_1}^H \mY +\mY\mA_{k_1})$ acting on the finite-dimensional subspace $\cH_5:=\spn\{e_{-2},e_{-1},e_0,e_1,e_2\}$.
Since $\ip{x}{\mQ_{k_1} x}=\ip{x}{2\hRzz x}\geq 18\|x\|^2$ for all $x\in(\cH_5)^\perp$, we are left to prove uniform coercivity of $\mQ_{k_1}\big|_{\cH_5}$, or equivalently, of the following $5\times 5$-matrix:
For $\alpha :=\epsilon_{k_1} k_1$, the matrices~$\mQ$ representing $\mQ_{k_1}\big|_{\cH_5}$ read
\[
 \mQ
 =
 \begin{bmatrix}
  8\nu & 0 & -\alpha\beta_2 & 0 & 0 \\
  0 & -2\alpha\beta_1 +2\nu & -\alpha\nu/k_1 & 2\alpha\beta_1 & 0 \\
  -\alpha\beta_2 & -\alpha\nu/k_1 & 4\alpha\beta_1 & \alpha\nu/k_1 & -\alpha\beta_2 \\
  0 & 2\alpha\beta_1 & \alpha\nu/k_1 & -2\alpha\beta_1 +2\nu & 0 \\
  0 & 0 & -\alpha\beta_2 & 0 & 8\nu
 \end{bmatrix}
\]
where $\beta_1, \beta_2$ are functions given as
\[
 \beta_1: 
 \Z\to\R\ , \quad 
 k_1 \mapsto \frac{k_1^2}{2\sqrt{k_1^2 +1}\ |k_1|} \ ,
 \qquad \text{and }
 \beta_2: 
 \Z\to\R\ , \quad 
 k_1 \mapsto \frac{k_1^2+2}{2\sqrt{k_1^2 +4}\ \sqrt{k_1^2 +1}} \ ,
\]
which satisfy
\begin{equation}\label{bounds:beta1} 
\beta_{1,\min} := \frac1{2\sqrt{2}} \leq \beta_1(k_1) < \frac12 
\quad 
\text{for all } k_1\in\Zo \ ,
\end{equation}
and
\begin{equation}\label{bounds:beta2}
\beta_{2,\min} := \frac{3}{\sqrt{40}} \leq \beta_2(k_1) \leq \frac12 
\quad 
\text{for all } k_1\in\Zo \ .
\end{equation}
A Hermitian matrix is positive definite if and only if all of its leading principal minors are positive definite \cite{HoJo13}.
Using permutations of rows and columns, it is evident that all principal minors have to be positive definite.
Indeed, we consider other (not only the leading) principal minors to highlight restrictions on $\alpha$.

The $1\times 1$ minors are the diagonal elements of~$\mQ$.
The leading principal $1\times 1$ minor $\mQ_{1,1}=8\nu>0$ is positive, since the diffusion coefficient $\nu>0$ is positive.
The coefficient $\mQ_{2,2} =-2\alpha\beta_1 +2\nu$ is positive if and only if $\alpha \beta_1 < \nu$ for all $k_1\in\Zo$ which holds if 
\begin{equation}\label{minor:Q2}
 \alpha \le 2\nu 
 \quad 
 \text{for all $k_1\in\Zo$} ,
\end{equation}
due to~\eqref{bounds:beta1}.
The coefficient $\mQ_{3,3} =4\alpha\beta_1$ is positive if and only if 
$0<\alpha \beta_1$ for all $k_1\in\Zo$, which holds if
\begin{equation}\label{minor:Q3}
 0<\alpha \quad \text{for all $k_1\in\Zo$} ,
\end{equation}
due to~\eqref{bounds:beta1}.

The leading principal $2\times 2$ minor $\det\mQ_{\{1,2\}\times\{1,2\}}=8\nu\ (-2\alpha\beta_1 +2\nu)$ is positive, since it is the product of two diagonal elements.
The principal minor $\det\mQ_{\{2,4\}\times\{2,4\}}=(-2\alpha\beta_1 +2\nu)^2 -(2\alpha\beta_1)^2 =2\nu\ (-4\alpha\beta_1 +2\nu)>0$, if and only if $-4\alpha\beta_1 +2\nu>0$ for all $k_1\in\Zo$, which is satisfied if 
\begin{equation}\label{minor:Q24}
 \alpha \le\nu 
 \quad 
 \text{for all $k_1\in\Zo$} ,
\end{equation}
due to~\eqref{bounds:beta1}.
The principal minor $\det\mQ_{\{2,3\}\times\{2,3\}}=\alpha\big(8\beta_1 \nu -\alpha(8\beta_1^2 +\nu^2/k_1^2)\big)$ is positive if and only if $\alpha< 8\beta_1 \nu /(8\beta_1^2 +\nu^2/k_1^2)$ for all $k_1\in\Zo$.
This holds if 
\begin{equation}\label{minor:Q23}
 \alpha <\nu \frac{2\sqrt{2}}{2+\nu^2}
 \quad 
 \text{for all $k_1\in\Zo$} ,
\end{equation}
since ${2\sqrt{2}}/(2+\nu^2) \leq 8\beta_1 /(8\beta_1^2 +\nu^2/k_1^2)$ for all $k_1\in\Zo$ due to~\eqref{bounds:beta1}.

The leading principal $3\times 3$ minor $\det\mQ_{\{1,2,3\}\times\{1,2,3\}}=2\beta_1 \beta_2^2 \alpha^3 -2(32\beta_1^2 \nu +\beta_2^2 \nu +4\nu^3 /{k_1^2})\ \alpha^2 +64\beta_1 \nu^2 \alpha$ is positive if  
\[
 \underbracket{\beta_1 \beta_2^2}_{=:a_3} \alpha^2 \underbracket{-\nu (32\beta_1^2 +\beta_2^2 +4\nu^2)}_{=:b_3} \alpha +\underbracket{32\beta_1 \nu^2}_{=:c_3} 
 >
 0
\]
for all $k_1\in\Zo$.
This quadratic polynomial in~$\alpha$ has two positive roots, since $b_3<0$ and $c_3>0$.
Moreover, since $4 a_3 c_3$ is uniformly bounded away from 0, there exists a constant $\alpha_{3,\min}>0$,  independent of $k_1$, which is a lower bound (uniformly in $k_1\in\Zo$) for the smaller root~$\alpha^{(3)}_{-} :=(-b_3 -\sqrt{b_3^2 -4 a_3 c_3})/(2a_3)$.
Consequently, for $\alpha\in(0,\alpha_{3,\min})$, the leading principal $3\times 3$ minor $\det\mQ_{\{1,2,3\}\times\{1,2,3\}}$ is positive for all $k_1\in\Zo$. 

The leading principal $4\times 4$ minor $\det\mQ_{\{1,2,3,4\}\times\{1,2,3,4\}}=8\beta_1 \beta_2^2 \nu \alpha^3 -4(64\beta_1^2 \nu^2 +\beta_2^2 \nu^2 +8\nu^4 /{k_1^2})\ \alpha^2 +128\beta_1 \nu^3 \alpha$ is positive if  
\[
 \underbracket{2\beta_1 \beta_2^2}_{=:a_4} \alpha^2 \underbracket{-\nu (64\beta_1^2 +\beta_2^2 +8\nu^2)}_{=:b_4} \alpha +\underbracket{32\beta_1 \nu^2}_{=:c_4} 
 >
 0
\]
for all $k_1\in\Zo$.
This quadratic polynomial in~$\alpha$ has two positive roots, since $b_4<0$ and $c_4>0$.
Moreover, since $4 a_4 c_4$ is uniformly bounded away from 0, there exists a constant $\alpha_{4,\min}>0$, independent of $k_1$, which is a lower bound (uniformly in $k_1\in\Zo$) for the smaller root~$\alpha^{(4)}_{-} :=(-b_4 -\sqrt{b_4^2 -4 a_4 c_4})/(2a_4)$. 
Consequently, for $\alpha\in(0,\alpha_{4,\min})$, the leading principal $4\times 4$ minor $\det\mQ_{\{1,2,3,4\}\times\{1,2,3,4\}}$ is positive for all $k_1\in\Zo$.

The (leading) principal $5\times 5$ minor $\det\mQ=128\beta_1 \beta_2^2 \nu^2 \alpha^3 -64(32\beta_1^2 \nu^3 +\beta_2^2 \nu^3 +4\nu^5 /{k_1^2})\ \alpha^2 +1024\beta_1 \nu^4 \alpha$ is positive if  
\[
 \underbracket{2\beta_1 \beta_2^2}_{=:a_5} \alpha^2 \underbracket{-\nu (32\beta_1^2 +\beta_2^2 +4\nu^2)}_{=:b_5} \alpha +\underbracket{16\beta_1 \nu^2}_{=:c_5} 
 >
 0
\]
for all $k_1\in\Zo$.
This quadratic polynomial in~$\alpha$ has two positive roots, since $b_5<0$ and $c_5>0$.
Moreover, since $4 a_5 c_5$ is uniformly bounded away from 0, there exists a constant $\alpha_{5,\min}>0$, independent of $k_1$, which is a lower bound (uniformly in $k_1\in\Zo$) for the smaller root~$\alpha^{(5)}_{-} :=(-b_5 -\sqrt{b_5^2 -4 a_5 c_5})/(2a_5)$.
Consequently, for $\alpha\in(0,\alpha_{5,\min})$, the leading principal $5\times 5$ minor $\det\mQ$ is positive for all $k_1\in\Zo$.

Thus, we choose
\begin{equation}\label{cond:Oseen:alpha}
 0 <\alpha <\min\big\{ 1/{\sqrt2},\ \nu,\ \nu\frac{2\sqrt{2}}{2+\nu^2},\, \alpha_{3,\min},\ \alpha_{4,\min},\ \alpha_{5,\min} \big\} =: \alpha_{\min},
\end{equation}
due to the restriction $|\epsilon_{k_1}|<1/{\sqrt2}$ to ensure that $\mX_{k_1}$ is positive definite and $\epsilon_{k_1}= \alpha/k_1$, and those conditions to ensure that $\mQ =\mQ_{k_1}$ is positive definite.\\

Finally we  determine $\mu_{k_1}>0$ (bounded below, uniformly in $k_1\in\Zo$) such that the LMIs~\eqref{DAE:Oseen:LMI} hold. 
Let $\{\lambda_1,\ \lambda_2,\ \ldots,\ \lambda_5\}$ be the eigenvalues of the positive definite Hermitian matrix~$\mQ$ arranged in increasing order.
We seek a lower bound on $\lambda_1$.
Note that the arithmetic-geometric mean inequality yields
\[
 \lambda_1
 =
 \frac{\det\mQ}{\lambda_2 \lambda_3 \lambda_4 \lambda_5}
 \geq 
 \big(\frac{\lambda_2 +\lambda_3 +\lambda_4 +\lambda_5}4\big)^{-4} \det{\mQ} 
 \geq 
 256 \frac{\det\mQ}{(\tr\mQ)^4} .
\]
Since $\tr\mQ =20\nu$ is independent of $k_1$, we finally obtain the bound
\begin{equation}\label{model:Oseen:lambda1}
\begin{split}
 \lambda_1
&\geq 
 256 \frac{\det\mQ}{(\tr\mQ)^4}
\\
&=
 \big( 128\beta_1 \beta_2^2 \nu^2 \alpha^3 -64(32\beta_1^2 \nu^3 +\beta_2^2 \nu^3 +4\nu^5 /{k_1^2})\ \alpha^2 +1024\beta_1 \nu^4 \alpha\big)  \frac{256}{(20\nu)^4}
\\
&= 
 \big( 2\beta_1 \beta_2^2 \alpha^2 -(32\beta_1^2 +\beta_2^2 +4\nu^2 /{k_1^2})\nu \alpha +16\beta_1 \nu^2 \big) 64\nu^2 \alpha \frac{256}{(20\nu)^4} 
\\
&>  
 \big( 2\beta_{1,\min} \beta_{2,\min}^2 \alpha^2 -({33}/{4} +4\nu^2)\nu \alpha +16\beta_{1,\min} \nu^2\big) \alpha \frac{64}{5^4 \nu^2} 
 =:\lambda_{1,\min}(\alpha) \ .
\end{split}
\end{equation}
Note that $\lambda_{1,\min}>0$ for $\alpha>0$ small enough, and it is a lower bound on $\lambda_1$, uniform in $k_1\in\Zo$.
Then, for each admissible $\alpha$ from~\eqref{cond:Oseen:alpha} such that $\lambda_{1,\min}(\alpha)>0$, the uniform estimates $\mQ\geq \lambda_{1,\min}(\alpha)\mI$ and $\mX_{k_1} \leq (1+\sqrt{2}|\epsilon_{k_1}|)\ \mI \leq 2\mI$ using $|\epsilon_{k_1}|\leq 1/\sqrt{2}$ imply that $\mQ \geq \lambda_{1,\min}\mI \geq (\lambda_{1,\min}/2) \mX_{k_1}$.
Hence, the inequalities~\eqref{DAE:Oseen:LMI} hold for some constants $\mu_{k_1}>0$ which are uniformly (w.r.t.~$k_1$) bounded from below. 
One may choose, e.g., $\mu_{k_1} =\lambda_{1,\min}/4$, $k_1\ne0$.
\end{proof}

\section*{Declarations}

\textbf{Ethical Approval}
is not applicable.

\medskip\noindent
\textbf{Authors' contributions}
All authors wrote and reviewed the manuscript in a collaborative process.

\medskip\noindent
\textbf{Funding}
The first author (FA) was supported by the Austrian Science Fund (FWF) via the FWF-funded SFB \# F65.
The second author (AA) was supported by the Austrian Science Fund (FWF), partially via the FWF-doctoral school "Dissipation and dispersion in non-linear partial differential equations'' (\# W1245) and the FWF-funded SFB \# F65.
The third author (VM) was supported by Deutsche Forschungsgemeinschaft (DFG) via the DFG-funded SFB \# 910.

\medskip\noindent
\textbf{Availability of data and materials}
In this research no datasets were generated or analysed.

\medskip\noindent
\textbf{Conflict of interest}
The authors declare that they have no conflict of interest.

\medskip\noindent
\textbf{Acknowledgements}
For the purpose of open access, the authors have applied a CC BY public copyright licence to any Author Accepted Manuscript version arising from this submission.

\end{appendices}


\bibliography{AAM-2022-HCIndex}

\begin{thebibliography}{51}
\providecommand{\natexlab}[1]{#1}
\providecommand{\url}[1]{{#1}}
\providecommand{\urlprefix}{URL }
\providecommand{\doi}[1]{\url{https://doi.org/#1}}
\providecommand{\eprint}[2][]{\url{#2}}
 \bibcommenthead

\bibitem[{Achleitner et~al(2015)Achleitner, Arnold, and St\"{u}rzer}]{AASt15}
Achleitner F, Arnold A, St\"{u}rzer D (2015) Large-time behavior in
  non-symmetric {F}okker--{P}lanck equations. Riv Math Univ Parma (NS)
  6(1):1--68

\bibitem[{Achleitner et~al(2016)Achleitner, Arnold, and Carlen}]{AAC16}
Achleitner F, Arnold A, Carlen EA (2016) On linear hypocoercive {BGK} models.
  In: From particle systems to partial differential equations. {III}, Springer
  Proc. Math. Stat., vol 162. Springer, Cham, p 1--37,
  \doi{10.1007/978-3-319-32144-8_1}

\bibitem[{Achleitner et~al(2018)Achleitner, Arnold, and Carlen}]{AAC18}
Achleitner F, Arnold A, Carlen EA (2018) On multi-dimensional hypocoercive
  {BGK} models. Kinet Relat Models 11(4):953--1009. \doi{10.3934/krm.2018038}

\bibitem[{Achleitner et~al(2019)Achleitner, Arnold, and Signorello}]{AAS19}
Achleitner F, Arnold A, Signorello B (2019) On optimal decay estimates for
  {ODE}s and {PDE}s with modal decomposition. In: Stochastic dynamics out of
  equilibrium, Springer Proc. Math. Stat., vol 282. Springer, Cham, p 241--264,
  \doi{10.1007/978-3-030-15096-9_6}

\bibitem[{Achleitner et~al(2022)Achleitner, Arnold, and Carlen}]{AAC22b}
Achleitner F, Arnold A, Carlen EA (2022) {Construction of strict Lyapunov
  functionals for semi-dissipative ODEs. The hypocoercivity index for the
  large-time behavior of {ODE}s}. work in progress

\bibitem[{Achleitner et~al(2023{\natexlab{a}})Achleitner, Arnold, and
  Carlen}]{AAC22}
Achleitner F, Arnold A, Carlen EA (2023{\natexlab{a}}) The hypocoercivity index
  for the short time behavior of linear time-invariant {ODE} systems. J
  Differential Equations 371:83--115. \doi{10.1016/j.jde.2023.06.027}

\bibitem[{Achleitner et~al(2023{\natexlab{b}})Achleitner, Arnold, and
  Mehrmann}]{AAM21}
Achleitner F, Arnold A, Mehrmann V (2023{\natexlab{b}}) Hypocoercivity and
  controllability in linear semi-dissipative {H}amiltonian ordinary
  differential equations and differential-algebraic equations. ZAMM Z Angew
  Math Mech 103(7):Paper No. e202100171, 31. \doi{10.1002/zamm.202100171}

\bibitem[{Achleitner et~al(2023{\natexlab{c}})Achleitner, Arnold, and
  Mehrmann}]{AAM22b}
Achleitner F, Arnold A, Mehrmann V (2023{\natexlab{c}}) Hypocoercivity and
  hypocontractivity concepts for linear dynamical systems. Electron J Linear
  Algebra 39:33--61. \doi{10.13001/ela.2023.7531}

\bibitem[{Amrouche and Consiglieri(2011)}]{AmrCon11}
Amrouche C, Consiglieri L (2011) On the stationary {O}seen equations in {$\Bbb
  R^3$}. Commun Math Anal 10(1):5--29

\bibitem[{Arnold and Erb(2014)}]{ArEr14}
Arnold A, Erb J (2014) Sharp entropy decay for hypocoercive and non-symmetric
  {F}okker--{P}lanck equations with linear drift. arXiv preprint arXiv:14095425

\bibitem[{Arnold et~al(2021)Arnold, Dolbeault, Schmeiser, and
  W\"{o}hrer}]{ArDoScWo21}
Arnold A, Dolbeault J, Schmeiser C, et~al (2021) Sharpening of decay rates in
  {F}ourier based hypocoercivity methods. In: Recent advances in kinetic
  equations and applications, Springer INdAM Ser., vol~48. Springer, Cham, p
  1--50, \doi{10.1007/978-3-030-82946-9\_1}

\bibitem[{Arnold et~al(2022)Arnold, Schmeiser, and Signorello}]{ASS20}
Arnold A, Schmeiser C, Signorello B (2022) Propagator norm and sharp decay
  estimates for {F}okker--{P}lanck equations with linear drift. Comm Math Sc
  20(4):1047--1080

\bibitem[{Bae and Jin(2008)}]{BaJi08}
Bae HO, Jin BJ (2008) Estimates of the wake for the 3{D} {O}seen equations.
  Discrete Contin Dyn Syst Ser B 10(1):1--18. \doi{10.3934/dcdsb.2008.10.1}

\bibitem[{Batchelor(2000)}]{Ba2000}
Batchelor G (2000) An introduction to fluid dynamics. Cambridge University
  Press, Cambridge

\bibitem[{Bernstein(2018)}]{Be18}
Bernstein DS (2018) Scalar, vector, and matrix mathematics. Princeton
  University Press, Princeton, NJ

\bibitem[{Bouin et~al(2020)Bouin, Dolbeault, Mischler, Mouhot, and
  Schmeiser}]{BoDoMiMoSc20}
Bouin E, Dolbeault J, Mischler S, et~al (2020) Hypocoercivity without
  confinement. Pure Appl Anal 2(2):203--232. \doi{10.2140/paa.2020.2.203}

\bibitem[{Brezis(2011)}]{Br11}
Brezis H (2011) Functional analysis, {S}obolev spaces and partial differential
  equations. Universitext, Springer, New York

\bibitem[{Chemin et~al(2000)Chemin, Desjardins, Gallagher, and Grenier}]{CDGG}
Chemin JY, Desjardins B, Gallagher I, et~al (2000) Fluids with anisotropic
  viscosity. M2AN Math Model Numer Anal 34(2):315--335.
  \doi{10.1051/m2an:2000143}

\bibitem[{Chemin et~al(2006)Chemin, Desjardins, Gallagher, and
  Grenier}]{CheDesGalGre06}
Chemin JY, Desjardins B, Gallagher I, et~al (2006) Mathematical geophysics: An
  introduction to rotating fluids and the Navier-Stokes equations, Oxford
  Lecture Series in Mathematics and its Applications, vol~32. The Clarendon
  Press, Oxford University Press, Oxford

\bibitem[{Chen(1999)}]{Ch99}
Chen CT (1999) Linear system theory and design, 3rd edn. The Oxford series in
  electrical and computer engineering, Oxford University Press, New York, NY

\bibitem[{Curtain and Pritchard(1978)}]{CuPr78}
Curtain R, Pritchard A (1978) Infinite dimensional linear systems theory,
  Lecture Notes in Control and Information Sciences, vol~8. Springer-Verlag,
  Berlin-New York

\bibitem[{Curtain and Zwart(2020)}]{CuZw20}
Curtain R, Zwart H (2020) Introduction to infinite-dimensional systems theory,
  Texts in Applied Mathematics, vol~71. Springer, New York,
  \doi{10.1007/978-1-0716-0590-5}

\bibitem[{Datko(1970)}]{Da70}
Datko R (1970) Extending a theorem of {A}. {M}. {L}iapunov to {H}ilbert space.
  J Math Anal Appl 32:610--616. \doi{10.1016/0022-247X(70)90283-0}

\bibitem[{Dautray and Lions(2000)}]{DL3}
Dautray R, Lions J (2000) Mathematical Analysis and Numerical Methods for
  Science and Technology, Vol. 3: {S}pectral Theory and Applications. Springer

\bibitem[{Dolbeault et~al(2009)Dolbeault, Mouhot, and Schmeiser}]{DoMoSc09}
Dolbeault J, Mouhot C, Schmeiser C (2009) Hypocoercivity for kinetic equations
  with linear relaxation terms. C R Math Acad Sci Paris 347(9-10):511--516.
  \doi{10.1016/j.crma.2009.02.025}

\bibitem[{Dolbeault et~al(2015)Dolbeault, Mouhot, and Schmeiser}]{DoMoSc15}
Dolbeault J, Mouhot C, Schmeiser C (2015) Hypocoercivity for linear kinetic
  equations conserving mass. Trans Amer Math Soc 367(6):3807--3828.
  \doi{10.1090/S0002-9947-2015-06012-7}

\bibitem[{Emmrich and Mehrmann(2013)}]{EM13}
Emmrich E, Mehrmann V (2013) Operator differential-algebraic equations arising
  in fluid dynamics. Computational Methods in Applied Mathematics
  13(4):443--470

\bibitem[{Engel and Nagel(2000)}]{EnNa00}
Engel KJ, Nagel R (2000) One-parameter semigroups for linear evolution
  equations, Graduate Texts in Mathematics, vol 194. Springer-Verlag, New York

\bibitem[{Evans(2010)}]{Ev10}
Evans LC (2010) Partial differential equations, Graduate Studies in
  Mathematics, vol~19, 2nd edn. American Mathematical Society, Providence, RI,
  \doi{10.1090/gsm/019}

\bibitem[{Foias et~al(2001)Foias, Manley, Rosa, and Temam}]{FMRT01}
Foias C, Manley O, Rosa R, et~al (2001) Navier-{S}tokes equations and
  turbulence, Encyclopedia of Mathematics and its Applications, vol~83.
  Cambridge University Press, Cambridge, \doi{10.1017/CBO9780511546754}

\bibitem[{Galdi(2011)}]{Gal11}
Galdi GP (2011) An introduction to the mathematical theory of the
  {N}avier-{S}tokes equations, 2nd edn. Springer Monographs in Mathematics,
  Springer, New York, \doi{10.1007/978-0-387-09620-9}

\bibitem[{Grabowski(1990)}]{Gr90}
Grabowski P (1990) On the spectral-{L}yapunov approach to parametric
  optimization of distributed-parameter systems. IMA J Math Control Inform
  7(4):317--338. \doi{10.1093/imamci/7.4.317}

\bibitem[{Hansen and Weiss(1997)}]{HaWe97}
Hansen S, Weiss G (1997) New results on the operator {C}arleson measure
  criterion. In: Townley S (ed) Distributed parameter systems: analysis,
  synthesis and applications, Part 1, IMA J. Math. Control Inform., vol~14.
  Oxford University Press, p 3--32, \doi{10.1093/imamci/14.1.3}

\bibitem[{Horn and Johnson(2013)}]{HoJo13}
Horn RA, Johnson CR (2013) Matrix analysis, 2nd edn. Cambridge University
  Press, Cambridge

\bibitem[{John(2016)}]{Jo16}
John V (2016) Finite element methods for incompressible flow problems, Springer
  Series in Computational Mathematics, vol~51. Springer, Cham,
  \doi{10.1007/978-3-319-45750-5}

\bibitem[{Kailath(1980)}]{Ka80}
Kailath T (1980) Linear systems. Prentice-Hall, Inc., Englewood Cliffs, N.J.

\bibitem[{Kato(1995)}]{Kato}
Kato T (1995) Perturbation theory for linear operators. Classics in
  Mathematics, Springer-Verlag, Berlin, reprint of the 1980 edition

\bibitem[{Lagerstrom(1996)}]{La1996}
Lagerstrom P (1996) Laminar flow theory. Princeton University Press, Princeton,
  NJ

\bibitem[{Layton(2008)}]{Lay08}
Layton W (2008) Introduction to the Numerical Analysis of Incompressible
  Viscous Flows. {SIAM} Publications, Philadelphia, PA

\bibitem[{Mehl et~al(2018)Mehl, Mehrmann, and Wojtylak}]{MehMW18}
Mehl C, Mehrmann V, Wojtylak M (2018) Linear algebra properties of dissipative
  {H}amiltonian descriptor systems. SIAM J Matrix Anal Appl 39(3):1489--1519.
  \doi{10.1137/18M1164275}

\bibitem[{Mikhailov(2023)}]{Mik23}
Mikhailov SE (2023) Stationary anisotropic {S}tokes, {O}seen and
  {N}avier-{S}tokes systems: periodic solutions in {$\Bbb R^n$}. Math Methods
  Appl Sci 46(9):10903--10928

\bibitem[{Paicu(2005{\natexlab{a}})}]{Pa}
Paicu M (2005{\natexlab{a}}) \'{E}quation anisotrope de {N}avier-{S}tokes dans
  des espaces critiques. Rev Mat Iberoamericana 21(1):179--235.
  \doi{10.4171/RMI/420}

\bibitem[{Paicu(2005{\natexlab{b}})}]{Pa05periodic}
Paicu M (2005{\natexlab{b}}) \'{E}quation periodique de {N}avier-{S}tokes sans
  viscosit\'{e} dans une direction. Comm Partial Differential Equations
  30(7-9):1107--1140. \doi{10.1080/036053005002575529}

\bibitem[{Pazy(1983)}]{Pazy}
Pazy A (1983) Semigroups of linear operators and applications to partial
  differential equations. Springer

\bibitem[{Rannacher(2000)}]{Ran00}
Rannacher R (2000) Finite element methods for the incompressible
  {N}avier-{S}tokes equations. In: Fundamental directions in mathematical fluid
  mechanics. Adv. Math. Fluid Mech., Birkh\"{a}user, Basel, p 191--293

\bibitem[{Shinbrot(1973)}]{Shi73}
Shinbrot M (1973) Lectures on fluid mechanics. Gordon and Breach, Science
  Publ., New York

\bibitem[{Temam(1995)}]{T95}
Temam R (1995) Navier-{S}tokes equations and nonlinear functional analysis,
  CBMS-NSF Regional Conference Series in Applied Mathematics, vol~66, 2nd edn.
  Society for Industrial and Applied Mathematics (SIAM), Philadelphia, PA,
  \doi{10.1137/1.9781611970050}

\bibitem[{Temam(1997)}]{T97}
Temam R (1997) Infinite-dimensional dynamical systems in mechanics and physics,
  Applied Mathematical Sciences, vol~68, 2nd edn. Springer-Verlag, New York,
  \doi{10.1007/978-1-4612-0645-3}

\bibitem[{Temam and Wang(1996)}]{TW96}
Temam R, Wang X (1996) Asymptotic analysis of {O}seen type equations in a
  channel at small viscosity. Indiana Univ Math J 45(3):863--916

\bibitem[{Villani(2009)}]{Vi09}
Villani C (2009) Hypocoercivity. Mem Amer Math Soc 202(950):iv+141.
  \doi{10.1090/S0065-9266-09-00567-5}

\bibitem[{Wonham(1985)}]{Wo85}
Wonham WM (1985) Linear multivariable control, Applications of Mathematics (New
  York), vol~10, 3rd edn. Springer-Verlag, New York,
  \doi{10.1007/978-1-4612-1082-5}

\end{thebibliography}

\end{document}